\definecolor{dark-blue}{rgb}{0,0,0.5}
\newcommand{\op}{\operatorname}
\newcommand{\bb}{\mathbb}
\renewcommand{\bf}{\mathbf}
\newcommand{\scr}{\mathscr}
\newcommand{\cal}{\mathcal}
\newcommand{\fk}{\mathfrak}
\renewcommand{\rm}{\mathrm}
\DeclareMathOperator\Hom{Hom}
\DeclareMathOperator\spec{Spec}
\newcommand{\ra}{\longrightarrow}
\newcommand{\hra}{}
\DeclareRobustCommand{\hra}{\lhook\joinrel\relbar\joinrel\rightarrow}
\newcommand{\hla}{}
\DeclareRobustCommand{\hla}{\leftarrow\joinrel\relbar\joinrel\rhook}
\DeclareMathOperator{\CH}{CH}
\DeclareMathOperator{\Ch}{Ch}
\DeclareMathOperator{\CHW}{\widetilde{{\rm CH}}}
\newcommand{\D}{\rm D}
\newcommand{\Gm}{\mathbb{G}_m}
\newcommand{\GW}{{\rm {GW}}}
\newcommand{\Ker}{\rm{Ker}}
\newcommand{\K}{\mathrm{K}}
\newcommand{\KM}{{\rm K}^{\rm M}}
\newcommand{\KMW}{{\rm K}^{\rm{MW}}}
\newcommand{\Km}{{\rm k}^{\rm M}}
\newcommand{\Lcal}{\mathcal{L}}
\newcommand{\Mbar}{\overline{\mathcal{M}}}
\newcommand{\Mcal}{\mathcal{M}}
\newcommand{\Ocal}{\mathcal{O}}
\newcommand{\rk}{\mathrm{rk}}
\newcommand{\Pb}{P}
\newcommand{\W}{{\rm W}}
\newcommand{\I}{\rm I}
\newcommand{\ZZ}{\mathbb{Z}}
\newcommand{\kc}[1]{\langle #1 \rangle}
\renewcommand{\th}{\operatorname{Th}}
\newcommand{\HH}{\rm H}
\newcommand{\T}{\rm T}
\newcommand{\E}{\rm V}
\newcommand{\U}{\rm U}
\theoremstyle{plain}
\newtheorem{thm}[subsubsection]{Theorem}
\newtheorem*{thm-no-num}{Theorem}
\newtheorem{prop}[subsubsection]{Proposition}
\newtheorem{lemma}[subsubsection]{Lemma}
\theoremstyle{definition}
\newtheorem{defin}[subsubsection]{Definition}
\theoremstyle{remark}
\newtheorem{rmk}[subsubsection]{Remark}
\title[Equivariant Chow-Witt groups and moduli stacks of elliptic curves]{Equivariant Chow-Witt groups \\ and moduli stacks of elliptic curves}
\author{Andrea Di Lorenzo}
\address{Department of Mathematics, Humboldt-Universit\"at zu Berlin , Berlin, Germany}
\email{andrea.dilorenzo@hu-berlin.de}
\author{Lorenzo Mantovani}
\address{Department of Mathematics, Johannes Gutemberg-Universit\"at Mainz, Mainz, Germany}
\email{lorenzo.mantovani@uni-mainz.de}
\subjclass[2010]{14F42}
\begin{document}
\begin{abstract}
    We introduce equivariant Chow-Witt groups in order to define Chow-Witt groups of quotient stacks. We compute the Chow-Witt ring of the moduli stack of stable (resp. smooth) elliptic curves, providing a geometric interpretation of the new generators. Along the way, we also determine the Chow-Witt ring of the classifying stack of $\mu_{2n}$.
\end{abstract}
\maketitle
\tableofcontents

\section*{Introduction} 
Intersection theory on moduli of curves is a central area of study in algebraic geometry: in the last forty years, there has been a huge amount of works in this field. Among the different research directions, an interesting one consists in determining the structure of the integral Chow ring of moduli stacks of curves, a task which has been accomplished only in few cases (e.g. \cite{EG}, \cite{Vis}, \cite{Lar}, \cite{DFV}).

In \cite{BM,FGCW} the authors introduced more refined cycle groups, called \emph{Chow-Witt groups}, initiating the so-called \emph{quadratic} intersection theory. Roughly speaking, if the basic objects of study in intersection theory are cycles on a variety, i.e. formal sums of subvarieties with integral or rational coefficients, in Chow-Witt theory we consider cycles with coefficients in certain Grothendieck-Witt rings of quadratic forms.

The motivation for studying these new invariants is that they are more sensitive to the arithmetic of the ground field. For instance, quadratic intersection theory can be used to extend some classical results of enumerative geometry to general base fields, albeit at the price of "counting with quadratic forms" rather than with integers. For instance in \cite{KW} the authors generalize results of Severi asserting that the difference between the number of hyperbolic lines and elliptic lines on a real cubic surface is always equal to three. 
In a slightly different direction, Chow-Witt groups have turned out to be useful in the study of characteristic classes of bundles on varieties. Work in this direction has been carried out extensively in \cite{HW} and \cite{WendtGrass}, where characteristic classes of $\mathrm{SL}_n$-, $\mathrm{SP}_n$-, and $\mathrm{GL}_n$-bundles have been studied via an explicit description of the Chow-Witt ring of the  classifying stack of the groups. Similar questions for orthogonal groups are under investigation in \cite{MNW}.

In this framework we direct our attention to moduli stacks of curves, looking for invariants of quadratic nature. The goal of this paper is to make some small steps in this direction, focusing on the moduli stack $\Mbar_{1,1}$ (resp. $\Mcal_{1,1}$) of stable (resp. smooth) marked curves of genus one. Our main results consist in defining Chow-Witt ring of quotient stacks and in determining a presentation of the Chow-Witt rings of $\Mbar_{1,1}$ and $\Mcal_{1,1}$ in terms of generators and relations.
\begin{thm-no-num}
Let $k$ be a perfect field of characteristic different from $2$ and $3$, and let $\GW(k)$ denote the Grothendieck-Witt ring of $k$, with fundamental ideal $I$. We have isomorphisms of $GW(k)$-algebras:
\[ \CHW^*(\Mbar_{1,1},\bullet)\simeq\GW(k)[\T,\E,\HH]/(I\cdot\T,I\cdot\HH,\HH^2-2h,h\E,\HH\E,\E^2,24\T^2,12\HH\T^2-\E\T)\]
\[\CHW^*(\Mcal_{1,1},\bullet)\simeq\GW(k)[\T,\D,\HH]/(I\cdot\T,I\cdot\HH,\HH^2-2h,h\D,\HH\D,\D^2+2\D,6\T\HH,12\T-\D\T).\]
\end{thm-no-num}

For these computations Chow-Witt classes are indexed by codimension $\ast \in \bb N$ and by twisting $\bullet \in \rm{Pic}/2$. Since the grading is a delicate matter, we direct the reader to \ref{subs:grading} and to the beginning of Section \ref{sec:CW Bmu2n} for more details on our conventions for definitions and for computations respectively. A treatment with more details is given in Section 1.3 of \cite{FasLectures} and in Appendix A of \cite{feld1}.

Here is a brief explanation of the significance of the multiplicative generators that appear above, but we encourage the reader to refer to \Cref{thm:CW ring of Mbar} and \Cref{thm:CW ring of Mcal} for a precise description, and \ref{subsec:inter} for a geometric description. The element $\T$ is an element in degree $(1, \cal E)$ corresponding to the Euler class of the dual of the Hodge line bundle $\cal E$ on $\Mbar_{1,1}$.

The element $\HH$ lives in degree $(0,\cal E)$ and is a sort of "twisted" version of hyperbolic plane, and its presence is somehow not a surprise. More interestingly, the generator $\E$ is an element in degree $(1,\cal E^{\otimes -12})$, it is supported on the divisor of nodal curves $\Delta_0$, and it is related to the discriminant of the universal family of nodal curves. The elements $\T,\HH$ restrict to $\Mcal_{1,1}$ maintaining the same meaning, whereas $\D$ corresponds to a class of degree $(0,\cal O)$ associated to the quadratic form on $\cal E^{\otimes 6}$ induced by the discriminant of the universal elliptic curve.

To define Chow-Witt groups of quotient stacks, we follow closely the ideas of \cite{totaro_chowring} and \cite{EG}. These ideas have already been adapted to work with Chow-Witt theory in \cite{HW,WendtGrass}, but we need some further adjustment in order to work with singular stacks and to keep track of twists. Since our argument is rather abstract, we take the chance for introducing a good definition of $G$-equivariant Chow groups for $G$-varieties with coefficients in a Milnor-Witt cycle module in the sense of Feld (cf Section 4 of \cite{feld1}).

At the moment we restrict ourselves with $G$-varieties $X$ satisfying some technical assumption (cf. \ref{subs:assumptions_on_X}) implying that that the resulting quotients $[X/G]$ may be approximated by open schemes in suitable vector bundles over $[X/G]$. Most likely, one can make this theory work in higher generality, but to maintain the present paper as self-contained as possible we preferred to work under these hypotheses, that hold for all the $G$-varieties appearing in this paper.

Our \Cref{def:equivariant groups} of equivariant Chow groups with coefficients for a $G$-variety $X$ only depends on the associated quotient $[X/G]$ (see \Cref{prop:indep_of_choices_in_def_of_equiv_hom}), giving thus a reasonable cycle theory with coefficients for (possibly singular) quotient stacks that can be nicely approximated by schemes. 

To understand the multiplicative structure of $\Mbar_{1,1}$ and $\Mcal_{1,1}$ we also need some information on the Chow-Witt ring of the classifying stacks $\cal B\mu_{2n}$. The same approach used in \cite{MNW} for $n=1$ can be extended for other values of $n$. In addition we provide a description of the multiplicative structure.
\begin{thm-no-num}
  Let $k$ be a perfect field of characteristic different from $2n$. We have an isomorphism of $\GW(k)$-algebras 
  \[ \CHW^*(\scr B\mu_{2n},\bullet)\simeq\GW(k)[\T,\HH,\U]/(I\cdot\T,I\cdot\HH,h\U,\HH\U, n\T\HH, \HH^2-2h,\U^2+2\U,\T\U- 2n\T).\]
\end{thm-no-num}
The classes $\T,\HH$ are found in degrees $(1,\cal U)$ and $(0,\cal U)$ respectively, where $\cal U$ denotes the universal line bundle on $\scr B \mu_{2n}$, and they represent the Euler class $e(\cal U^\vee)$ and a variation of the hyperbolic plane respectively. The class $\U$ lives in degree $(0,\cal O)$ and is induced by the tautological quadratic form on $\cal U^{\otimes n}$.
In future works, we aim further developing the machinery of equivariant quadratic intersection theory (e.g. by developing localization formulas) with the goal in mind to tackle more advanced questions about the structure of Chow-Witt rings of moduli stacks of curves, and related enumerative problems.

\subsection*{Outline of the paper}

In \Cref{sec:preliminaries_on_chow_witt_groups} we recall some foundational aspects of Chow-Witt theory and, more generally, of homology groups with coefficients in a Milnor-Witt cycle modules. We then extend these notions to the equivariant setting, and thus to quotient stacks.

In \Cref{sec:preliminaries} we recall some properties of Chow-Witt groups that will be used in the subsequent computations.

In \Cref{sec:stacks_of_elliptic_curves} we collect some geometric facts on the classifying stacks $\scr B\Gm$ and $\scr B\mu_{2n}$ and on the moduli stacks $\Mbar_{1,1}$ and $\Mcal_{1,1}$. The geometry of these stacks of curves is tightly related to that of some vector bundles on $\scr B \Gm$ and to $\scr B\mu_{2n}$. It is this tight relation that allows us to cleanly carry out our approach.

In \Cref{sec:CW Bmu2n}, after recapping what is known on the Chow-Witt ring of $\scr B\Gm$, we compute the Chow-Witt ring of $\scr B\mu_{2n}$: we first describe its additive structure (\Cref{prop:CW groups Bmu2n}) and subsequently its multiplicative structure (\Cref{thm:CW ring of Bmu2n}).

In \Cref{sec:Chow-Witt_ring_of_Mbar} we compute the Chow-Witt rings of $\Mbar_{1,1}$ and $\Mcal_{1,1}$. We warm up by computing the $\I^\ast$-cohomology of $\Mbar_{1,1}$ (\Cref{prop:Ij cohomology Mbar}), which we need as input of the main computation. Then we determine the additive structure of Chow-Witt groups (\Cref{thm:additive structure Mbar} and \Cref{thm:additive structure Mcal}) and after that we conclude with the multiplicative part (\Cref{thm:CW ring of Mbar} and \Cref{thm:CW ring of Mcal}). Finally we give a geometric description of the new multiplicative generators appearing in the descriptions above (cf.  \ref{subsec:inter}).

In \Cref{sec:poor} we give a gentle introduction to Chow-Witt groups and some of their main properties. This Section is aimed for those readers who have little or no background in algebraic K-theory and motivic cohomology theories, but perhaps they are more acquainted with the usual theory of Chow groups. 

\subsection*{Conventions and notation}
All schemes are assumed to be of finite type over a perfect field $k$ of characteristic $\neq 2$. All stacks are stacks on the site of schemes over $k$, endowed with the fppf topology.
We recall the following standard notation:
\smallskip
\begin{center} 
\begin{tabular}{lp{0.5\textwidth}}
\toprule
$\GW(-)$ & the Grothendieck-Witt ring\\
$\W(-)$ & the Witt ring \\
$\I(-)$ & the fundamental ideal both in $\GW(-)$ and $\W(-)$\\
$\KM_\ast(-)$ & the Milnor K-theory ring\\
$\KMW_\ast(-)$ & the Milnor-Witt K-theory ring\\
$\Km_\ast(-)$ & the mod $2$ Milnor K-theory ring.\\
\bottomrule
\end{tabular}
\end{center}
\smallskip
When no field is specified in the symbols above, we are implicitly thinking of corresponding object for the ground field. If $\K_\ast$ is a Milnor-Witt cycle module and $X$ is a variety (or a quotient stack), the notation
\[H_{i,j}(X,\K_\ast,\cal L)\]
always refers to the homology groups of a suitable cycle complex with coefficients in $\K_\ast$. Upper indices are used only for smooth varieties (resp. stacks) when one can actually harmlessly confuse the homology groups of such cycle complexes with the Nisnevich cohomology groups of unramified sheaves associated with $\K_\ast$. 

We will use the following standard notation for Cartier divisors: if $\{U_i\}_{i\in I}$ is an affine covering of a scheme $X$, the notation $\{(U_i,f_i) \}_{i\in I}$ stands for the Cartier divisor $D\subset X$ whose equation in each affine open subset $U_i$ is $f_i=0$.

For a locally free sheaf $F$ on a variety $X$, we will denote $\bb V(F)$ the associated vector bundle on $X$. We adopt here the convention $\bb V(F):=\spec{\rm{Sym}(F^{\vee})}$, where $F^{\vee}$ is the dual of $F$.

\subsection*{Acknowledgments}
This project started in December 2019, when the first named author was visiting Andrew Kresch at the University of Zurich. We thank Andrew for this and for several others useful conversations on the topic. We wish to thank N. Feld and J. Fasel for answering some questions on Milnor-Witt cycle modules. In addition the second named author wishes to sincerely thank M. Wendt for the huge influence on his understanding of the subject of this paper. The second named author also thanks J. Ayoub for clarifying a number of doubts on the six functors formalism. Finally, we thank the referee for several suggestions that greatly improved the readability of the paper.

The second named author was supported by the \emph{Swiss National Science Foundation} (SNF), project 200020\_178729.

\section{Equivariant Chow-Witt groups} 
 \label{sec:preliminaries_on_chow_witt_groups}

   The ultimate goal of this section is to introduce a sensible notion of Chow-Witt groups with coefficients for quotient stacks. In \ref{sub:classical_theory} we briefly recall some notation on cycle complexes with coefficients in a Milnor-Witt cycle modules twisted by virtual vector bundles. We compare the homology groups of such complexes with motivic Borel-Moore homology of the spcetrum associated with the cycle module. For smooth equidimensional varieties we introduce a cohomological notation too, indexing cycles by codimension. In \ref{sub:equivariant_chow-witt_groups} we extend the notion of homology and cohomology of cycle complexes to the equivariant setting via the now standard technique introduced by Totaro in \cite{totaro_chowring} and further developed in \cite{EG} and in \cite{HW}. We conclude with an observation on how the equivariant Borel-Moore homology theory introduced here is actually an invariant of the associated quotient stack.
   
   In the second part of the section we will fix a smooth affine algebraic group $G$ over $k$.
    
  
  
  \subsection{Classical Theory} 
   \label{sub:classical_theory} 
    
    \subsubsection{}
    \label{subs:grading}
      
      Chow-Witt groups are naturally indexed on graded line bundles and more generally on virtual vector bundles; we recall these notions first. 

      Let $X$ be a variety of finite type over $k$. The graded Picard groupoid $\cal G(X)$ is the category of pairs $(\cal L,a)$ where $\cal L $ is a line bundle on $X$ and $a\in \op H^0(X,\bb Z)$; the set of morphisms of $\cal G(X)$ from $(\cal L,a)$ to $(\cal M,b)$ is that of $\cal O_X$-linear isomorphisms $\varphi: \cal L \ra \cal M$ if $a=b$ and $\emptyset$ otherwise. The groupoid $\cal G(X)$ is made into a symmetric monoidal category via $(\cal L,a)\otimes (\cal M,b):=(\cal L\otimes \cal M,a+b)$ and the symmetry isomorphism $(\cal L,a)\otimes (\cal M,b) \ra (\cal M,b) \otimes (\cal L,a)$ is defined to be $l\otimes m \mapsto (-1)^{ab}m\otimes l$. 

      Let us consider now the Quillen K-theory space $K(X)=\Omega\op{BQVect}(X)$ of $X$, and let us denote by $\underline K(X)$ its fundamental groupoid. In 4.2 of \cite{deligne:det_coh} Deligne calls $\underline K(X)$ the category of virtual objects of the exact category $\op{Vect}(X)$. We will denote by $+$ the monoidal structure induced on $\underline K(X)$ from the direct sum of vector bundles, and by $\underline 0$ the neutral element. Similarly for $r \in \bb Z$, the symbol $\underline r$ will denote the virtual vector bundle of rank $r$. The category $\underline K(X)$  is used to index the twists of Chow-Witt groups with coefficients in the work of Feld. The grading by $\underline K(X)$ is related to the grading by $\cal G(X)$ via the graded determinant $D:\underline K(X) \ra \cal G(X),v \mapsto (\det(v),\rk(v))$, which turns out to be a symmetric monoidal functor (cf. 4.13 of Deligne's work).

      If $\scr X=[X/G]$ is a quotient stack one gives a similar definition of $\underline K(\scr X)$ starting from the exact category of vector bundles $\op{Vect}(\scr X)$ on $\scr X$, or equivalently from the exact category of $G$-vector bundles $\op{Vect}_G(\scr X)$ on $X$. 

      Let $\scr E$ be an exact category (in the sense of Section 2 of \cite{quillen:hakt}) which is a full subcategory of an abelian category $\scr A$ so that the inclusion functor be exact. By combining Theorem 1.11.2 and Theorem 1.11.7 of \cite{thomason_trobaugh} we have a canonical homotopy equivalence of spaces between the Quillen $K$-theory space $K(\scr E)$ and the Waldhausen $K$-theory space of $\scr E^{\sim}$, where $\scr E^{\sim}$ is the category of bounded chain complexes in $\scr E$ with a suitably defined Waldhausen structure (cf. 1.11.6 where the players of Theorem 1.11.7 are properly introduced). In our context, this implies that a bounded complex of vector bundles on a scheme (or algebraic stack) $X$ gives a well defined object of $\underline K(X)$. To be even more general, one can replace $\cal E^\sim$ with its associated $\infty$-category, and obtaining an equivalent $K$-theory space (cf. section 7 of \cite{bgt}). In particular the cotangent complex $L_f$ of a morphism $f: X \ra Y$ between smooth schemes over a field $k$, has always a canonically defined object $L_f \in \underline K(X)$.

    \subsubsection{}
    \label{subs:rs_complexes}
      Let $X$ be a $k$-variety, let $(\cal L,a)$ be a graded line bundle on $X$ and $j$ an integer. For every point $x\in X$ denote by $k(x)$ the residue field at $x$, and by $\Omega_{k(x)/x}$ the $k(x)$-vector space of relative differential $1$-forms. The Rost-Schmid complex $C_\bullet(X,\KMW_\ast,j,\cal L,a)$ with coefficients in Milnor-Witt K-theory is the chain complex with $i$-th term 
      \begin{equation}
        \label{eqn:KMW-cyccle_complex}
          C_i(X,\KMW_\ast, j,\cal L,a)=\bigoplus _{x \in X_{(i)}}\KMW_{i+j}(k(x),D(\Omega_{k (x)/k})\otimes \cal (L_x,a)).
      \end{equation}
      Here $\KMW_p(k(x),D(\Omega_{k (x)/k})\otimes \cal (L_x,a))$ denotes the degree $p$ Milnor-Witt K-theory of the residue field at $x$, twisted by the one dimensional graded $k(x)$-vector space $D(\Omega_{k (x)/k})\otimes \cal (L_x,a)$. We redirect the reader to \cite[Section 1.1]{FasLectures} for a detailed definition of twisted Milnor-Witt K-theory, or to \Cref{sec:poor} for a brief account. 

      The differential $d_i: C_i(X,\KMW_\ast,j,\cal L,a) \ra C_{i-1}(X,\KMW_\ast,j,\cal L,a)$ is constructed using a combination of the residue maps for discrete valuation rings and the geometric transfer maps in Milnor-Witt K-theory: the construction can be found in Section 2.1 of \cite{FasLectures}. 
      With our definitions $C_\bullet(X,\KMW_\ast,j,\cal L,a)$ is a well defined chain complex, which we regard as concentrated in range $[d,0]$. 

    \begin{defin}
      \label{defin:cw_tw_by_lb}
      The Chow-Witt group of $i$-dimensional cycles on $X$ twisted by $\cal (L,a) \in \cal G(X)$ is 
      \[\CHW_i(X,\cal L,a)=H_i(C_\bullet(X,\KMW_\ast,-i,\cal L,a)).\] Similarly we define
      \[H_{i,j}(X,\KMW_\ast,\cal L,a)=H_i\big (C_\bullet(X,\KMW_\ast,-j,\cal L,a) \big ).\]
    \end{defin} 
    The definition of Chow-Witt groups above, for $a=1$, coincides with the one given in \ref{sub:chow-witt app}.
    
    \subsubsection{}
      \label{subsub:rs_complex_twisted_by_vb}
      In fact for having a natural comparison with other motivic theories one should really consider twists by $v \in \underline K(X)$, defining
      \[C_\bullet(X,\KMW_\ast,j,\kc v):=C_{\bullet}\big( X,\KMW_\ast,j,D(v)\big ).\]
    
    \begin{defin}
      \label{defin:CW}
      \label{defin:cw_twisted_by_kc}
      The Chow-Witt group of $i$-dimensional cycles on $X$ twisted by $v \in \underline K(X)$ is
      \[\CHW_i(X,\kc v):=H_i(C_\bullet(X,\KMW_\ast,-i,\kc v))=H_i\big (C_\bullet(X,\KMW_\ast,-i,D(v))\big ).\] Similarly
      \[H_{i,j}(X,\KMW_\ast,\kc v)=H_i\big (C_\bullet(X,\KMW_\ast,-j,\kc v) \big )=H_i\big (C_\bullet(X,\KMW_\ast,-j,D(v))\big ).\]
      
    \end{defin} 
    \subsubsection{}
    Let $\underline n$ denote a virtual vector bundle of rank $n$. We have isomorphisms 
    \begin{align*}
      \CHW_i(X,\kc v) & \simeq \CHW_i(X,\kc{v+\underline n}) \\
      H_{i,j}(X,\KMW_\ast,\kc v) & \simeq H_{i,j}(X,\KMW_\ast,\kc{v+n}),
    \end{align*}
    since modifying the rank of $v$ does not influence the chain complex $C_\bullet(X, \KMW_\ast,j,\kc v)$. However the isomorphism $\underline n+v\simeq v +\underline n$ induces an isomorphism on the homology of the Rost-schmid complexes, whose sign depeneds on $n$. 
    
    \subsubsection{}
        \label{subsub:gen_cyc_mods}
        Milnor-Witt K-theory is a particular example of Milnor-Witt cycle module in the sense of \cite{feld1} (cf. Theorem 3.20 of \cite{feld1} for a proof). Given any Milnor-Witt cycle module $\K_\ast$, Feld assigns to every $k$-variety $X$ and every $v\in \underline K(X)$ a Milnor-Witt cycle complex (Definition 5.4 of \cite{feld1})  
        \[(C^{F}_\bullet(X,\K_\ast,v),d^F)\] concentrated in homological degrees $[d,0]$, and corresponding Chow-Witt groups with coefficients (\cite[Definition 7.2]{feld1})
        \[A_i(X,\KMW_\ast,v):=H_i(C^{F}_\bullet(X,\K_\ast,v)).\]
        Let us denote by $\underline n$ the trivial virtual vector bundle of rank $n$. The complex we have introduced in \ref{subsub:rs_complex_twisted_by_vb} compares with Feld's definition as follows:
        \[C^{F}_\bullet(X,\KMW_\ast, v)=C_\bullet(X,\KMW_\ast,\rm{rk}(v),D(v)) \quad d^F_i=d_i,\]
        and in particular the two cycle theories compare via canonical isomorphism
        \[A_{i}(X,\KMW_\ast,v) \simeq H_{i,-\rk (v)}(X,\KMW_\ast,\kc v).\]

        Recall that in \cite{feld2}, the author constructs an equivalence between the category of Milnor-Witt cycle modules and the heart of the motivic stable category $\scr{SH}(k)^\heartsuit$. In Section 3.2 of \cite{feld2} the author indeed defines a functor $E\mapsto \widehat{E}$ that associates with every $E\in \scr{SH}(k)$ the functor
        \[\widehat {E}: (L,v) \mapsto [\bf 1_{\spec L}, \Sigma^{-\rk(v)}\th(v)]_{L},\]
        where $k\subseteq L$ is any finitely generated field extension, $v\in \underline K(L)$, $\th(-)$ denotes the Thom space construction, and $[-,-]_{L}$ denotes the $\Hom$ groups of the homotopy cateogory of $\scr{SH}(L)$. The author then proves (cf. Theorem 3.3.3 of \cite{feld2}) that $\widehat {E}$ has canonically the structure of a Milnor-Witt cycle module, and that $E\mapsto \widehat{E}$ is an equivalence when restricted to $\scr{SH}(k)^\heartsuit$ (cf. Section 4 of \cite{feld2}).

        Recall that every Milnor-Witt cycle module $\K_\ast$ yelds a Borel-More homology theory on $k$-varieties of finite type by simply taking homology groups of the complex $C^F_\ast(X,\K_\ast,v)$. On the other hand, with every object $E \in \scr{SH}(k)$ we can associate a Borel-More homology theory for $k$-varieties of finite type by setting:
        \[\op E_{i,j}^{BM}(X/k,\kc v):=[\Sigma^{i,j}\th(v),p_X^! E]_{X},\]
        where $i$ and $j$ are integers, $v\in \underline K(X)$, $\th(v)$ is the Thom spectrum of $v$ and $p_X:X \ra \spec k$ denotes the the structure morphism. More generally we will denote by $p_T$ the structure morphism of any $k$-scheme $T$.
    
    \begin{prop}
      Let $X$ be any separated $k$-variety of finite type, $v\in \underline K(X)$ a virtual bundle. Moreover let $E\in \scr{SH}(k)^\heartsuit$ and $\K_\ast=\widehat E$ be the corresponding Milnor-Witt homotopy module. Then we have a natural identifications
      \begin{equation*}
       A_{i+\rk(v)}(X,\K_\ast,-v-\underline n) \simeq \op E^{BM}_{i+n,n}(X/k,\kc v).
      \end{equation*}

    \end{prop}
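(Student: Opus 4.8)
The plan is to realise both sides as, respectively, the abutment and the homology of the $E^1$-page of the niveau (coniveau) spectral sequence attached to the Borel--Moore homology theory $\op E^{BM}_{\ast,\ast}(-/k,\kc{-})$, and then to use that for $E$ in the heart of the homotopy $t$-structure this spectral sequence collapses onto a single row, whose underlying complex is Feld's cycle complex.

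First I would set up the spectral sequence. By adjunction, $\op E^{BM}_{i,j}(X/k,\kc v)=[\Sigma^{i,j}p_{X!}\th(v),E]_k$, so the theory is the $E$-homology of the Borel--Moore motive $M^{BM}(X,v):=p_{X!}\th(v)$. The six-functor formalism supplies, for each closed immersion $Z\hra X$ with complementary open $U$, a localisation triangle $M^{BM}(Z,v|_Z)\to M^{BM}(X,v)\to M^{BM}(U,v|_U)$, and filtering $X$ by dimension of support assembles these into an exact couple. The resulting spectral sequence is
\[E^1_{p,q}=\bigoplus_{x\in X_{(p)}}\op E^{BM}_{p+q,\,n}\!\big(\spec k(x)/k,\kc{v|_x}\big)\ \Longrightarrow\ \op E^{BM}_{p+q,\,n}(X/k,\kc v),\]
the point terms being computed, by continuity of $\op E^{BM}$ under cofiltered limits of opens with affine transition maps, as the colimit over the smooth opens of $\overline{\{x\}}$.

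Next I would identify the $E^1$-page, with its differential, with $C^F_\bullet(X,\widehat E,-v-\underline n)$. For a smooth $k$-scheme $U$ of dimension $p$ the purity equivalence gives $p_U^!E\simeq p_U^*E\wedge\th(T_{U/k})$, whence $\op E^{BM}_{a,b}(U/k,\kc v)\simeq[\Sigma^{a,b}\th(v|_U-T_{U/k}),p_U^*E]_U$; passing to the limit over $U\subseteq\overline{\{x\}}$ and using that line bundles over a field are trivial, the point term becomes --- up to the canonical twist and rank shift governed by \ref{subsub:rs_complex_twisted_by_vb}--\ref{subsub:gen_cyc_mods} --- exactly a value of the cycle module $\widehat E$ at $k(x)$, hence a summand of Feld's complex. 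The identification of the differential $d^1$ with Feld's $d^F$ --- residues at codimension-one specialisations together with the geometric transfers along the normalisation --- is the technical crux: one must check that the connecting map of the localisation triangle of a smooth henselian-local curve computes Morel's residue and that the proper-pushforwards appearing in $d^1$ agree with the Gysin/trace maps underlying Feld's transfers. In the oriented case this is Déglise's comparison; here it is precisely the cycle-module structure that \cite{feld2} extracts from $\scr{SH}(k)^\heartsuit$, so I would invoke that compatibility rather than redo it.

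Finally I would use that $E$ lies in the heart, hence that $[\Sigma^{c,d}\bf 1_{k(x)},E_{k(x)}]=0$ whenever $c\neq d$; combined with the purity computation above this forces $\op E^{BM}_{p+q,n}(\spec k(x)/k,\kc{v|_x})=0$ unless $q=q_0:=n-\rk(v)$, a value independent of $p$. Thus $E^1_{p,q}=0$ for $q\neq q_0$, the spectral sequence degenerates at $E^2$, and
\[\op E^{BM}_{p+q_0,\,n}(X/k,\kc v)=H_p\big(E^1_{\bullet,q_0},d^1\big)\cong H_p\big(C^F_\bullet(X,\widehat E,-v-\underline n)\big)=A_p(X,\widehat E,-v-\underline n).\]
Taking $p=i+\rk(v)$ (so that $p+q_0=i+n$) yields the asserted natural identification, and naturality in proper morphisms and in $v$ is built into the construction. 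The one genuinely delicate point is the differential-matching of the previous paragraph --- pinning down signs and line-bundle twists when translating six-functor boundary maps into Morel--Feld residues and transfers; everything else (existence and convergence of the niveau spectral sequence, continuity, and the collapse in the heart) is formal once that dictionary is fixed, which is why the cleanest route is to rely on the comparison already implicit in \cite{feld1,feld2}.
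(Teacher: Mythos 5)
Your proposal is correct and follows essentially the same route as the paper: the niveau spectral sequence for $\op E^{BM}$, identification of the $E^1$-page (via continuity and purity at the points) with Feld's cycle complex, deferral of the differential comparison to the argument of Theorem 3.3.2 of \cite{feld2}, and collapse onto a single row because $E$ lies in the heart. The only difference is a harmless shift in the indexing convention for the $q$-degree of the spectral sequence.
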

    \begin{proof}
        We use the niveau spectral sequence on $X$ converging to the cohomology of the Thom space $\th(v)$ represented by the motivic spectrum $p^! E \in \scr{SH}(X)$, (cf. Definition 3.1.5 of \cite{BD}). In terms of Borel-Moore homology the spectral sequence reads:
        \[E^1_{p,q} = \bigoplus _{x \in X_{(p)}} \; \rm{colim}_{U\subseteq \overline {\{x\}}} \op E^{\rm{BM}}_{p+q+n,n}(U, \kc{v_{|U}})\Rightarrow \op E^{\rm{BM}}_{p+q+n,n}(X,\kc v),\]
        with differential $d^k_{p,q}: E^k_{p,q} \ra E^k_{p-k,q+k-1}$, and where colimits range over the collection of open subsets of the closure of $\{x\}$. We want to show that the complexes $E^1_{_\ast,q}$ can be canonically identified with $C^F_\ast(X,\widehat E',-v-\underline n)$, where $E':=\Sigma^{-\rm{rk}(v)-q}E$.

        Unrolling definitions the spectral sequence reads:
        \[E^1_{p,q}=\bigoplus _{x \in X_{(p)}} \; \rm{colim}_{U\subseteq \overline {\{x\}}} [\Sigma^{p+q+n,n}\th (v_{|U}), p_U^!E]_U \Rightarrow [\Sigma^{p+q+n,n}\th(v),p_X^! E]_{X}.\]
        For every $x\in X$ the colimits appearing on the page can be restricted to smooth open subschemes U of the closure of $x$. Combining this with the continuity property of cohomology, we canonically identify
        \[\rm{colim}_{U\subseteq \overline {\{x\}}} [\Sigma^{p+q+n,n}\th (v_{|U}), p_U^!E]_U \simeq [\Sigma^{p+q+n,n}\th (v_x),  p_x^\ast E \otimes \th(\Omega^1_{k(x)/k})]_x,\]
        where $v_x$ denotes the pull-back of $v$ to $k(x)$. If we set $r:=\rm{rk}(v)$ then $E'=\Sigma^{-r-q}E$, and we obtain the following chain of canonical isomorphism
        \begin{equation*}
          \begin{split}
            [\Sigma^{p+q+n,n}\th (v_{|x}), p_x^\ast E \otimes \th(\Omega^1_{k(x)/k}) ]_x  & \simeq [ \bf 1_{k(x)}, p_x^\ast E \otimes \Sigma ^{-p-q+n}\th(\Omega^1_{k(x)/k}-v_x -\underline n) ]_x \\
            & \simeq [ \bf 1_{k(x)}, p_x^\ast E \otimes \Sigma ^{-q-r}\Sigma ^{-p+r+n}\th(\Omega^1_{k(x)/k}-v_x -\underline n) ]_x \\
            & \simeq [ \bf 1_{k(x)}, p_x^\ast E' \otimes \Sigma ^{-p+r+n}\th(\Omega^1_{k(x)/k}-v_x -\underline n) ]_x \\
            & = \widehat{E'}(k(x), \Omega^1_{k(x)/k}-v_x-\underline n)\\
            & = \widehat{E'}(x,-v_x-\underline n).
          \end{split}
        \end{equation*}
        The last two equalities holds literally by definition (cf. the beginning of Section 3.2 of \cite{feld2} and the beginning of Section 4 of \cite{feld1} for the second last and last equalities respectively). Thus
        \[E^1_{p,q} \simeq \bigoplus_{x \in X_(p)} \widehat{E'}(x, \Omega^1_{k(x)/k}-v_x-\underline n) = C_p^{F}(X,\widehat{E'},-v-\underline n),\] 
        where the last equality is the definition of the cycle complex of a Milnor-Witt cycle module (cf. Definition 5.4 of \cite{feld1}).

        The next step consists in identifying the differentials, i.e. proving that the diagram 
        \begin{equation*}
          \begin{tikzcd}
          E^1_{p,q} \arrow[r,"d^1_{p,q}"] \arrow[d,"\simeq"'] & E^1_{p-1,q} \arrow [d,"\simeq"]\\
          C_p^{F}(X,\widehat{E'},-v-\underline n) \arrow[r,"d^F"] & C_{p-1}^{F}(X,\widehat{E'},-v-\underline n)
        \end{tikzcd}
        \end{equation*}
        commutes. The proof of this fact is very similar to the proof of Theorem 3.3.2 of \cite{feld2}, so we don't repeat it here, but we rather suggest the required modifications to Feld's argument. In the proof of 3.3.2 the author is working with the niveau spectral sequence constructed out of $E$-cohomology with support, whereas here we have used $E$-Borel-More homology. Replacing every occurence of $E$-cohomology with $E$-Borel-More homology relative to the base field $k$, and every occurrence of $E$-cohomology with support with $E$-Borel-More homology of the support does the trick. The ingredients statements 2.3.7-10 of \cite{feld2}, work for $E$-Borel-More homology directly. Finally the smoothness assumtion of the Theorem 3.3.2 can be comppletely dropped, since this assumption is only used to replace cohomology of the ambient variety $X$ with support in a closed subscheme $T$ with the actual cohomology of the subscheme. This identification comes for free with Borel-More homology, for any finite type variety $X$.

        We are now ready to finish the argument. We have 
        \begin{equation*}
          \begin{split}
            [\Sigma^{p+q+n,n}\th (v_{|x}), \th(\Omega^1_{k(x)/k})\otimes p_x^\ast E]_x  & \simeq [\Sigma^{p+q+n+2r,n+r},\Sigma^{2p,p}p_x^\ast E]\\
            & \simeq \underline \pi_{q+r}(E)_{p-r-n}(\spec k(x)),\\
          \end{split}
        \end{equation*}
        where $\underline \pi_k(E)_j$ denotes the $k$-th homotopy module of the spectrum $E$ in weight $j$. Since $E\in \scr{SH}(k)^{\heartsuit}$, we have that $\underline \pi_k(E)_j=0$ if $k\not = 0$ independently of $j$.  The niveau spectral sequence thus collapses at page $E^2$, yielding a canonical isomorphism 
        \[\op E^{BM}_{i+n,n}(X/k,\kc v)=H_{i+r}(C^{F}(X,\K_\ast,-v-\underline n)).\]
        \end{proof}

    \subsubsection{}
        When $p: X \ra \spec k$ is a smooth $k$-variety, Ayoub's purity equivalence 
        \[p^!(-)\simeq\th(\Omega^1_{X/k})\otimes p^\ast(-)\] in $\scr{SH}(X)$ induces an isomorphism in any motivic cohomology theory $E$
        \[\op E^{BM}_{i+j,j}(X/K,\kc{\Omega^1_{X/k}-v})\simeq \op E^{-i-j,-j}(X,v).\]
        This motivates the following Definition.
    \begin{defin}[c.f \cite{FasLectures} Section 2.1]
      \label{defin:coh_RS_cpx}
      Let $X$ be a smooth equidimensional $k$-variety of dimension $d$, and $\K_\ast$ a Milnor-Witt cycle module. For a graded line bundle $(\cal L,a)$ and a virtual vector bundle $v \in \underline K(X)$ we define 
      \begin{align*}
        C^\bullet(X,\KMW_\ast,j,\cal L,a) & :=C_{d-\bullet}(X,\KMW_\ast,j-d, D(\Omega^1_X)^{-1}\otimes(\cal L,a)) 
      \end{align*}
        regarded as a cochain complexes concentrated in cohomological degrees $[0,d]$. Moreover we set 
      \begin{equation*}
        H^{i,j}(X,\KMW_\ast,\cal L,a) :=H^i(C^\bullet(X,\KMW_\ast,j,\cal L,a)) = H_{d-i,d-j}(X,\KMW_\ast,D(\Omega^1_X)^{-1} \otimes (\cal L,a)),
      \end{equation*}
      and the twisted cohomological Chow-Witt groups
      \begin{equation*}
        \CHW^i(X,\cal L,a) :=H^{i,i}(X,\KMW_\ast,\cal L,a ))=\CHW_{d-i}(X,D(\Omega^1_X)^{-1} \otimes (\cal L,a))
       \end{equation*}
        Following Feld's Definition 7.2 of \cite{feld1}, for $v\in \underline K(X)$ we set
        \begin{equation*}
          A^i(X,\K_\ast,v):=A_{d-i}(X,\K_\ast,v).
        \end{equation*}
        \end{defin}

        \subsubsection{}
        \label{subsub:BlochFormula}
        After Corollary 8.5 of \cite{feld1}, for every Milnor-Witt homotopy module $\K_\ast$  and every smooth variety $X$ we have a canonical isomorphism  
        \[A^i(X,\K_\ast,v)\simeq \op H^i_{Zar}(X,\cal A_X^i(\K_\ast,v)),\]
        where $\cal A_X^i(\K_\ast,v)$ denotes the Zariski sheafification of the presheaf $U\mapsto A^0(U,\K_\ast,v)$. In particular, for $\K_\ast=\KMW_\ast$ we have
        \[H^{i,j}(X,\KMW_\ast,\cal L,a)=\op H^i_{Zar}(X,\cal K^{\rm{MW}}_{j,X}(\cal L,a)),\]
        where $\cal K^{\rm{MW}}_{j,X}(\cal L,a)$ denotes the Zariski sheafification of $U\mapsto H^{0,j}(U,\KMW_\ast,\cal L,a)$.

  \subsection{Equivariant Theory} 
   \label{sub:equivariant_chow-witt_groups}
    \subsubsection{}
     \label{subs:scheme_approx_BG}
      Let $G$ be a smooth affine algebraic group (of finite type) over $k$. Recall (\cite[Remark 1.4]{totaro_chowring}) that given an integer $c$ we can always find a pair $(V,U)$ where $V$ is a representation of $G$, $U$ is an open subset of the vector bundle $\bb V(V)\ra\spec k$ associated to $V$, and such that $(V,U)$ satisfies the following properties:
    \begin{enumerate}
      \item the complement $S=\bb V(V)\smallsetminus U$ has codimension $>c$;
      \item $U$ is $G$-stable and the induced action of $G$ on $U$ is free;
      \item the quotient $U/G$ is a variety over $k$.
    \end{enumerate}
      In this setting $U/G$ is called an \emph{equivariant scheme approximation of $\scr B G$ in codimension $\leq c$}. With a little but common abuse of notation, we will denote with the same name the pair $(V,U)$.
      
      This condition essentially means that if $X$ is a $k$-variety, then every cycle $Z$ on $X$ with $\rm{codim}_X(Z)\leq c$ pulls back to a cycle on $X\times \bb V(V)$ not entirely contained in $X\times S$. (The same observation holds for cycles on $X$ of dimension $\geq \dim(X)-c>\dim(S)+\dim(X)-\rm{rk}(V)$.)
    
    \subsubsection{}
     \label{subs:assumptions_on_X}
      Let $X$ be a finite type variety over $k$ endowed with an action of $G$, and assume $G$ satisfies the avoe assumptions in \ref{subs:scheme_approx_BG}. Suppose that one of the following holds:
      \begin{enumerate}
          \item The reduced scheme $X_{red}$ is quasi-projective and the action of $G$ is linearized with respect to some quasi-projective embedding.
          \item The group $G$ is connected and $X_{red}$ embeds equivariantly as a closed subscheme in a normal variety.
          \item The group $G$ is special.
      \end{enumerate}
      Then for every equivariant scheme approximation $(V,U)$ of $\scr B G$ in codimension $\leq c$, the quotient of $X\times U$ modulo the diagonal action is a scheme (\cite[Proposition 23]{EG}). This quotient is denoted by $X\times_G U$.
      
      Recall that given $X$ and $G$ as above, we can define the stack $[X/G]$ whose $S$-points, for $S$ a scheme, are $G$-torsors $P\to S$ together with a $G$-equivariant map $P\to X$. The stack $[X/G]$ is in general an Artin stack (see \cite[04TK]{stacks-project}) and is usually referred to as \emph{quotient stack}. The scheme $X\times_G U$ is called an \emph{equivariant scheme approximation} of $[X/G]$ in codimension $\leq c$.

    \subsubsection{}
     \label{subs:G_twists}
      Let $X$ be any $G$-variety and let $(V,U)$ be an equivariant scheme approximation of $\scr BG$. Since the action of $G$ on $U$ (resp. on $X\times U$) is free, the quotient map $\pi:U\ra U/G$ (resp. $\pi: X\times U\ra X\times_G U$) is a $G$-torsor.
     In particular, a $G$-line bundle $\cal L$ (resp. a point $e$ of the $G$-equivariant K-theory space) of $X$ determines via pull-back a line bundle (resp. a point of the  K-theory space) on $X\times_GU$, which we will keep denoting by $\cal L$ (resp. $e$). The following commutative diagram should help to give a clear picture of the geometric setup.
     \begin{equation}
     \label{eqn:stack_twisting_diagram}
      \begin{tikzcd}[column sep=10pt,row sep=10pt]
           & & & \bb V(V) \ar[rr]  \ar[dd] & & \spec k  \ar[dd] \\
          X\times U \ar[rr, hook, "i'"] \ar[dd,"\pi'"] & & X\times \bb V(V) \ar[rr,"p'" near end ,crossing over]  \ar[dd, "\pi'"] \ar[ur]& & X \ar[ur] \ar[dd, "\pi" near start] & \\
          & & & {[\bb V(V)/G]} \ar[rr] & & \scr BG \\
          X\times_G U \ar[rr, hook,"i"] & & {[X\times\bb V(V)/G]} \ar[rr,"p"] \ar[ur]& & {[X/G]}\ar[ur] \ar[from=uu,crossing over]& 
      \end{tikzcd}
    \end{equation}
    Note that all squares are cartesian, the higher lever of the diagram is $G$-equivariant, and that one can pass from the higher to the lower level by taking the stack-theoretic quotient modulo $G$.
    
    There is a locally free sheaf on on $X\times_GU$, defined by taking the $0$-th Zariski cohomology sheaf of the relative cotangent complex $L_{p\circ i}$. The reader can refer to Chapter 17 of \cite{lmb} for more details on the relative cotangent complex of a $1$-morphism of stacks. Since $p\circ i$ is smooth and representable by schemes, $L_{p\circ i}$ is concentrated in degree zero (cf. Lemma 17.5.8 of loc. cit.). Since forming the cotangent complex commutes with flat base chance (Theorem 17.3.(4) of loc. cit.) and $L_{X\times U/X}$ is quasi-isomorphic to $\Omega^1_{X\times U/X}$ we deduce that $\pi'^\ast L_{p\circ i}$ is quasi-isomorphic to the pull-back of $V^\vee$ to $X\times U$. This shows that $L_{p\circ i}$ is canonically represented by the locally free sheaf concentrated in degree $0$ induced by the $G$-locally free sheaf $V^\vee$ on $X\times U$.

    We refrain from using the heavy notation $L_{X\times_G U/[X/G]}$ in what follows, and abusively stick instead with the symbol $V^\vee$, thinking of it as "the bundle induced by $V^\vee$". We associate to $V^\vee$ its graded determinant $D(V^\vee)\in \cal G(X\times_GU)$ and its class $V^\vee \in \underline K(X\times_GU)$.


    \begin{lemma}
      \label{lemma:general_indep_of_low_dim}
      Let $X$ be variety over $k$, let $e\in \underline K(X)$, and let $Z\subseteq X$ be a closed subscheme of dimension $<i-1$. Then for every Milnor-Witt cycle module $\K_\ast$ the restriction map 
      $A_p(X,\K_\ast,e) \ra A_p(X\setminus Z,\K_\ast,e)$
      is an isomorphism in all degrees $p\geq i$ and in all twists $e \in \underline K(X)$.
    \end{lemma}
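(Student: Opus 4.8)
The plan is to reduce the statement to the localization long exact sequence for homology of Milnor--Witt cycle modules together with a dimension count. Write $U := X\setminus Z$, with closed immersion $\iota\colon Z \hookrightarrow X$ and open immersion $\jmath\colon U \hookrightarrow X$. For any Milnor--Witt cycle module $\K_\ast$ and any $e \in \underline K(X)$ one has a natural long exact sequence
\[ \cdots \to A_p(Z,\K_\ast,e|_Z) \xrightarrow{\iota_\ast} A_p(X,\K_\ast,e) \xrightarrow{\jmath^\ast} A_p(U,\K_\ast,e|_U) \xrightarrow{\partial} A_{p-1}(Z,\K_\ast,e|_Z) \to \cdots \]
in which $\iota_\ast$ is the proper pushforward, $\jmath^\ast$ is the flat restriction, and the twists on the closed and open strata are the honest restrictions of $e$ --- no Thom shift by a normal bundle enters, since these are Borel--Moore-type homology groups. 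The map in the statement is exactly $\jmath^\ast$, so it suffices to show that the two flanking terms $A_p(Z,\K_\ast,e|_Z)$ and $A_{p-1}(Z,\K_\ast,e|_Z)$ vanish once $p \ge i$.

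For this I would invoke the structural fact recalled in \ref{subsub:gen_cyc_mods}: the Milnor--Witt cycle complex $C^F_\bullet(Z,\K_\ast,e|_Z)$ is concentrated in homological degrees $[\dim Z,0]$, so that $A_q(Z,\K_\ast,e|_Z)=H_q(C^F_\bullet(Z,\K_\ast,e|_Z))=0$ for every $q>\dim Z$. Since by hypothesis $\dim Z < i-1$, i.e. $\dim Z \le i-2$, this yields $A_q(Z,\K_\ast,e|_Z)=0$ for all $q \ge i-1$. Hence for $p\ge i$ both indices $p$ and $p-1$ satisfy $\ge i-1 > \dim Z$, the two outer groups in the displayed sequence vanish, and $\jmath^\ast$ is an isomorphism; as $e$ played no special role, this holds for every virtual twist. (If $Z=\emptyset$ the assertion is trivial.)

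I do not expect a genuine obstacle: the whole content is the localization sequence plus the bound on the length of the cycle complex. The only points needing a precise citation rather than a calculation are that Feld's formalism supplies such a localization sequence for an arbitrary (possibly singular) finite-type $X$ and an arbitrary virtual twist $e$, with the coefficients on $Z$ and $U$ being simply the restrictions of $e$, and that the restriction map occurring there coincides with the one named in the statement. Both are standard for cycle-module homology --- the classical prototype being Rost's localization sequence --- so this is bookkeeping rather than mathematics.
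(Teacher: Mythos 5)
Your proof is correct and follows exactly the paper's argument: the localization sequence for Milnor--Witt cycle module homology (Feld, 7.4) plus the observation that $C^F_\bullet(Z,\K_\ast,e|_Z)$ is concentrated in degrees $[\dim(Z),0]$, so both flanking terms vanish for $p\ge i$ when $\dim(Z)<i-1$. The paper's proof is a one-line version of precisely this dimension count.
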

    \begin{proof}
      It follows directly from the localization sequence (cf. $7.4$ of \cite{feld1}), combined with the fact that the complex $C^F_\bullet(Z,\K_\ast,\kc e)$ is concentrated in degrees $[\dim(Z),0]$ and $\dim(Z)<p-1$. 
    \end{proof}
    \begin{lemma}
      \label{lemma:homotopy_invariance}
      Let $X$ be a variety over $k$ and let $\pi:\bb V(\cal E)\ra X$ be a rank $r$ vector bundle on $X$. Then $\pi$ induces an isomorphism
      \[A_p(X,\K_\ast,e) \ra A_{p+r}(\bb V(\cal E), \K_\ast,-\Omega^1_{\bb V(\cal E)/X}+e )\]
      for all choices of $p$ and $e$. Furthermore we have a canonical isomorphism $\Omega^1_{\bb V(\cal E)/X}\simeq \pi^\ast \cal E^\vee$.
    \end{lemma}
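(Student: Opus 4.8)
The plan is to treat the two assertions in turn. The canonical isomorphism $\Omega^1_{\bb V(\cal E)/X}\simeq\pi^\ast\cal E^\vee$ is immediate from our convention $\bb V(\cal E)=\spec_X\op{Sym}(\cal E^\vee)$: the relative Kähler differentials of a symmetric algebra $\op{Sym}(F)$ over a ring $R$ form the free module $\op{Sym}(F)\otimes_R F$, and globalizing over $X$ this says precisely $\Omega^1_{\bb V(\cal E)/X}\simeq\pi^\ast\cal E^\vee$. I would dispatch this in a line.

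For the homotopy invariance statement I would first produce the comparison map. The projection $\pi$ is smooth, hence flat, of relative dimension $r$, so Feld's formalism furnishes a flat pull-back of cycle complexes $\pi^\ast\colon C^F_\bullet(X,\K_\ast,e)\ra C^F_{\bullet+r}(\bb V(\cal E),\K_\ast,-\Omega^1_{\bb V(\cal E)/X}+e)$ compatible with the differentials. The twist is corrected by $-\Omega^1_{\bb V(\cal E)/X}$ because, on passing from a point $x\in X$ to the generic point $\eta$ of the fibre $\pi^{-1}(\overline{\{x\}})$, the space $\Omega^1_{-/k}$ picks up a rank-$r$ free summand canonically identified with $(\cal E^\vee)_x\otimes k(\eta)\simeq(\Omega^1_{\bb V(\cal E)/X})_\eta$, so subtracting $\Omega^1_{\bb V(\cal E)/X}$ from the global twist makes the two terms of the Feld complex match under the field-extension maps of the cycle module (cf. \ref{subs:rs_complexes} and \ref{subsub:gen_cyc_mods}). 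It then remains to prove that this $\pi^\ast$ is an isomorphism on all $A_p$ and in every twist.

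I would argue in two steps. \emph{Base case.} For a trivial bundle $\bb V(\cal E)=\bb A^r_X$ the relative cotangent bundle is free of rank $r$; since the Feld complex is unchanged when a twist is altered by a trivial virtual bundle (cf. the discussion after \Cref{defin:CW}), $\pi^\ast$ is, up to this harmless re-indexing, the $r$-fold iterate of pull-back along $\bb A^1_{(-)}\ra(-)$, hence an isomorphism by Feld's homotopy invariance theorem (\cite{feld1}) applied $r$ times. \emph{General case, Noetherian induction on $\dim X$.} As $A_\ast$ is insensitive to nilpotents we may take $X$ reduced, and $\dim X=0$ is the base case. For $\dim X>0$ choose a dense open $U\subseteq X$ on which $\cal E$ is trivial and set $Z=X\smallsetminus U$ with its reduced structure, so $\dim Z<\dim X$. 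Flat pull-back is compatible with restriction to opens, push-forward along closed immersions, and boundary maps, so $\pi^\ast$ carries the localization long exact sequence of $(Z\hra X\hla U)$ to that of $(\bb V(\cal E|_Z)\hra\bb V(\cal E)\hla\bb V(\cal E|_U))$ (cf. $7.4$ of \cite{feld1}); the vertical maps over $U$ are isomorphisms by the base case and those over $Z$ by the inductive hypothesis, so the five lemma yields the isomorphism over $X$.

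The only real work is the bookkeeping in the second paragraph: keeping the degree shift by $r$ and the twist shift by $-\Omega^1_{\bb V(\cal E)/X}$ coherent with Feld's normalization of the complex, which is exactly the cancellation described there; the compatibility of $\pi^\ast$ with the pieces of the localization sequence is part of Feld's package and is invoked rather than reproved. A more highbrow alternative would be to combine the comparison with motivic Borel–Moore homology from the previous Proposition with Feld's equivalence $\K_\ast\simeq\widehat E$ for some $E\in\scr{SH}(k)^\heartsuit$, and then deduce the statement from Ayoub's purity $\pi^!\simeq\th(\Omega^1_{\bb V(\cal E)/X})\otimes\pi^\ast$ together with the $\bb A^1$-invariance of $E$; but the elementary localization argument keeps this section self-contained, so that is the route I would follow.
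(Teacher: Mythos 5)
Your proof is correct, but it is considerably more work than what the paper does: the paper disposes of the lemma in one line by citing Feld's homotopy invariance theorem (Theorem 9.4 of \cite{feld1}) directly, with the Borel--Moore formalism of \cite{djk} offered as an alternative, and notes that the identification $\Omega^1_{\bb V(\cal E)/X}\simeq\pi^\ast\cal E^\vee$ is well known. What you have written is essentially the standard deduction of the vector-bundle case from the $\bb A^1$-case: the twist bookkeeping in your second paragraph correctly accounts for the summand $\Omega^1_{k(\eta)/k(x)}\simeq(\Omega^1_{\bb V(\cal E)/X})_\eta$ appearing in the canonical twist of Feld's complex, and the Noetherian induction via a trivializing dense open, the localization sequence of $7.4$ of \cite{feld1}, and the five lemma is sound (the reduction to $X$ reduced is harmless since the cycle complex only sees points and residue fields, and on a reduced zero-dimensional base every bundle is trivial, so the induction closes). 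Your ``highbrow alternative'' via purity and the comparison with Borel--Moore homology is precisely the paper's second suggested route. What your argument buys is self-containedness at the cost of re-deriving material that the cited theorem already covers; what the paper's citation buys is brevity, at the cost of asking the reader to trust that Feld's statement matches the normalization of twists used here — which is exactly the bookkeeping you spell out.
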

    \begin{proof}
      This is Theorem 9.4 of \cite{feld1}. Alternatively, we can deduce it from the general properties of Borel-Moore homology (cf 2.1.3 of \cite{djk}). The second part of the statement is well known. 
    \end{proof}

    \begin{defin}\label{def:equivariant groups}
      Let $G$ be a smooth affine algebraic group over $k$ of dimension $g$ and let $X$ be a $G$-variety over $k$ satisfying any of the assumptions of \ref{subs:assumptions_on_X}. For every graded line bundle $(\cal L ,a)$ on $[X/G]$ and every pair of integers $i$ and $j$ we set
      \[\CHW^G_i(X,\cal L,a):=\CHW_{i+l-g}(X\times_GU,D(V^\vee)^{-1}\otimes (\cal L,a))\]
      and similarly
      \[H^G_{i,j}(X,\KMW_\ast,\cal L,a):=H_{i+l-g,j+l-g}(X\times_GU,\KMW_\ast, D(V^\vee)^{-1}\otimes(\cal L,a)),\]
      where $(V, U)$ is any scheme approximation of $\scr B G$ in codimension $\leq \dim(X)-i+1$ with $\dim_k(V)=l$; the objects $\cal L$ and $D(V^\vee)$ are those induced on $X\times _GU$ as described in \ref{subs:G_twists}. 

      Analogously for every Milnor-Witt cycle module $\K_\ast$, every $e \in \underline K([X/G])$ and $i\in \bb Z$ we set
      \[A^G_i(X,\K_\ast,e):=A_{i+l-g}(X\times_G U,\K_\ast, -V^\vee +\underline g +e).\]
      where $(V, U)$ is any equivariant scheme approximation of $\scr B G$ in codimension $\leq \dim(X)-i+1$.
    \end{defin}
    
    For $X$ smooth, we can also define equivariant \emph{cohomology} groups, for which a duality akin to that of Definition \ref{defin:coh_RS_cpx} holds (cf. \Cref{rmk:equivariant duality} below). Recall that since $G$ is smooth, the stack $[X/G]$ is smooth if and only if $X$ is. Thus when $X$ is smooth and $(V,U)$ is any scheme approximation, $X\times_GU$ is automatically a smooth scheme, since it is open in a vector bundle $[\bb V(V)\times X/G]$ over $[X/G]$. In particular for $X=\spec k$ the approximation $U/G$ is always smooth. For this reason, when we deal with smooth stacks we don't have to worry about providing an explicit smooth equivariant scheme approximation, because the smoothness of the approximation will be automatic.
    
    \begin{defin}\label{def:equivariant groups 2}
    Let $G$ be a smooth affine algebraic group over $k$ of dimension $g$ and let $X$ be a \emph{smooth} and equidimensional $G$-variety over $k$ satisfying any of the assumptions of \ref{subs:assumptions_on_X}. For every $(\cal L,a) \in \cal G([X/G])$ and every pair of integers $i$ and $j$ we set
    \[\CHW_G^i(X,\cal L,a):=\CHW^i(X\times_GU,\cal L,a)\]
     and similarly
    \[H_G^{i,j}(X,\KMW_\ast,\cal L,a):=H^{i,j}(X\times_GU,\KMW_\ast,\cal L,a),\]
     where $(V, U)$ is any scheme approximation of $\scr B G$ in codimension $\leq \dim(X)-i+1$ and $(\cal L,a)$ is induced on $X\times _GU$ as described in \ref{subs:G_twists}. 
      
     Analogously, for every Milnor-Witt cycle module $\K_\ast$, every $e \in \underline K([X/G])$ and every integer $i$ we set
     \[A^i_G(X,\K_\ast, e):=A^i(X\times_G U,\K_\ast,-V^\vee+ e),\]
     where $(V, U)$ is any equivariant scheme approximation of $\scr B G$ in codimension $\leq \dim(X)-i+1$, while $e$ is induced as described in \ref{subs:G_twists}.
    \end{defin}
    
    \begin{rmk}\label{rmk:equivariant duality}
    Let $X$ be a smooth $G$-scheme of dimension $d$. In particular $[X/G]$ is a smooth $k$-stack and its cotangent complex $L_{[X/G]/k}$ is a perfect complex. Actually $L_{[X/G]/k}$ is quasi-isomorphic to a two-term complex of $G$-vector bundles on $X$
    \[\Omega^1_{X/k} \ra \fk g^\vee\otimes_k \cal O_X,\]
    where $\fk g$ is the Lie algebra of $G$ with the adjoint action (cf. page 586 of \cite{behrend}). In light of \ref{subs:grading} $L_{[X/G]/k}$ gives a well defined object of $\underline K([X/G])$. 

    Let $(V,U)$ be an equivariant scheme approximation of $\scr B G$ in codimension $\leq i+1$. In view of diagram \eqref{eqn:stack_twisting_diagram}, on $X\times_G U$ we have an exact triangle of perfect complexes 
    \[(p\circ i)^\ast L_{[X/G]/k} \ra L_{X\times_GU/k} \ra L_{X\times_GU/[X/G]},\]
    so that $(p\circ i)^\ast L_{[X/G]/k}$ is naturally the virtual vector bundle $\Omega^1_{X\times_GU/k} - \Omega^1_{X\times_GU/[X/G]}$ on $X\times_G U$. It follows that for every virtual vector bundle $e$ on $[X/G]$ we have the chain of canonical isomorphism:
    \begin{equation*}
      \begin{split}
      \CHW^i_G(X,\cal L,a ) & \simeq \CHW^i(X\times_G U, \cal L,a) \\
      & \simeq \CHW_{d-i+l-g}(X\times_G U, D(-\Omega^1_{X\times U/k}) \otimes (\cal L,a)) \\
      & \simeq \CHW_{d-i+l-g}(X\times_G U, D(-\Omega^1_{X\times_GU/[X/G]}-(p\circ i)^\ast L_{[X/G]/k})\otimes (\cal L,a)) \\
      & \simeq \CHW_{d-i}^G(X, D(-L_{[X/G]/k})\otimes(\cal L,a)).
    \end{split}
    \end{equation*}

    This shows that the usual duality between Borel-Moore homology and cohomology is extended to the equivariant setting. The same argument shows that we also have
    \[ H^{i,j}_G(X,\KMW_\ast,\cal L,a)\simeq H_{d-i,d-j}^G(X,\KMW_\ast,D(-L_{[X/G]/k})\otimes(\cal L,a)).\]
    A similar statement holds for the homology of any Milnor-Witt cycle module.
    \end{rmk}

    \subsubsection{} 
      As for Chow groups, all the Definitions above does not depend on any choice. The proof, which we sketch below, is basically the same of \cite[Proposition 1]{EG}.

    \begin{prop}
      \label{prop:indep_of_choices_in_def_of_equiv_hom}
      Let $\K_\ast$ be a Milnor-Witt cycle module over $k$. The  definition of $A^G_{i}(X,\K_\ast, e)$ does not depend on $(V,U)$, as long as $(V,U)$ is an equivariant scheme approximation of $\scr BG$ in codimension $\leq \dim(X)-i+1$. In particular, under the same assumtions, the same holds for the groups $H^G_{i,j}(X,\KMW_\ast,\cal L,a)$. For $X$ smooth, the same statement holds for the respective cohomology groups.
    \end{prop}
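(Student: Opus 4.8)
The plan is to run the classical argument of \cite[Proposition 1]{EG}, whose only ingredients are homotopy invariance (\Cref{lemma:homotopy_invariance}) and the invariance of $A_\ast$ under deletion of closed subschemes of low dimension (\Cref{lemma:general_indep_of_low_dim}). Fix $i$ and put $c=\dim(X)-i+1$. Given two equivariant scheme approximations $(V_1,U_1)$ and $(V_2,U_2)$ of $\scr BG$ in codimension $\leq c$, with $\dim_k(V_s)=l_s$ and $S_s=\bb V(V_s)\smallsetminus U_s$, I would first build a common refinement. Set $V=V_1\oplus V_2$ and $l=l_1+l_2$, and consider inside $\bb V(V)=\bb V(V_1)\times\bb V(V_2)$ the three $G$-stable open subschemes $U_1\times\bb V(V_2)$, $\bb V(V_1)\times U_2$ and $W=U_1\times U_2$. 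On each of them $G$ acts freely, with scheme quotient (for instance $(U_1\times\bb V(V_2))/G$ is the vector bundle over $U_1/G$ associated with the representation $V_2$), and their complements in $\bb V(V)$ are $S_1\times\bb V(V_2)$, $\bb V(V_1)\times S_2$ and the union of these two, all of codimension $>c$; hence $(V,U_1\times\bb V(V_2))$, $(V,\bb V(V_1)\times U_2)$ and $(V,W)$ are again equivariant scheme approximations of $\scr BG$ in codimension $\leq c$, so that, since $X$ satisfies \ref{subs:assumptions_on_X}, the quotients $X\times_G(U_1\times\bb V(V_2))$, $X\times_G(\bb V(V_1)\times U_2)$ and $X\times_GW$ are all schemes. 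It then suffices to identify $A^G_i(X,\K_\ast,e)$ computed via $(V_1,U_1)$, via these three pairs, and via $(V_2,U_2)$.

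The comparison between $(V_1,U_1)$ and $(V,U_1\times\bb V(V_2))$ is an instance of homotopy invariance. The projection $q\colon X\times_G(U_1\times\bb V(V_2))\ra X\times_GU_1$ is the rank-$l_2$ vector bundle associated with the representation $V_2$; its relative cotangent sheaf is the bundle induced by $V_2^\vee$, and, since $V^\vee=V_1^\vee\oplus V_2^\vee$ as $G$-representations, the bundle induced by $V^\vee$ on the total space is the direct sum of $q^\ast(V_1^\vee)$ and this relative cotangent sheaf, so that $-V^\vee=-q^\ast(V_1^\vee)-\Omega^1_q$ in $\underline K$ of the total space. Applying \Cref{lemma:homotopy_invariance} with the twist $-V_1^\vee+\underline g+e$ on the base therefore yields a canonical isomorphism
\[A_{i+l_1-g}\bigl(X\times_GU_1,\K_\ast,-V_1^\vee+\underline g+e\bigr)\;\xrightarrow{\ \sim\ }\;A_{i+l-g}\bigl(X\times_G(U_1\times\bb V(V_2)),\K_\ast,-V^\vee+\underline g+e\bigr),\]
whose source and target are, by \Cref{def:equivariant groups}, exactly $A^G_i(X,\K_\ast,e)$ computed via $(V_1,U_1)$ and via $(V,U_1\times\bb V(V_2))$ respectively. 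By symmetry the analogous isomorphism relates $(V_2,U_2)$ to $(V,\bb V(V_1)\times U_2)$.

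It remains to compare the pairs $(V,U_1\times\bb V(V_2))$, $(V,W)$ and $(V,\bb V(V_1)\times U_2)$. Here $X\times_GW$ is an open subscheme of both $X\times_G(U_1\times\bb V(V_2))$ and $X\times_G(\bb V(V_1)\times U_2)$, with closed complements of codimension $>c$; since both ambient schemes have dimension $\dim(X)+l-g$, those complements have dimension $<\dim(X)+l-g-c=(i+l-g)-1$, and hence \Cref{lemma:general_indep_of_low_dim} applies in degree $p=i+l-g$, in every twist, so that restriction identifies all three groups. The resulting chain of isomorphisms shows that $A^G_i(X,\K_\ast,e)$ is independent of $(V,U)$; the statement for $H^G_{i,j}(X,\KMW_\ast,\cal L,a)$ is the special case $\K_\ast=\KMW_\ast$ with twist $D(\cal L,a)$ and the prescribed second index, and for $X$ smooth the cohomological assertions follow by reducing them, through the duality of \Cref{rmk:equivariant duality}, to the homological ones just established. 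The one delicate point is the bookkeeping of virtual vector bundles: one must verify that the shift $-V^\vee+\underline g$ built into \Cref{def:equivariant groups} is exactly compensated by the relative-cotangent twist appearing in \Cref{lemma:homotopy_invariance}, so that the homotopy-invariance isomorphism lands in the correctly twisted group rather than in one merely abstractly isomorphic to it.
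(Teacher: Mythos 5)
Your proof is correct and follows essentially the same route as the paper's: a zig-zag through the sum representation $V_1\oplus V_2$, using homotopy invariance (\Cref{lemma:homotopy_invariance}) for the vector-bundle projections and the low-dimensional deletion lemma (\Cref{lemma:general_indep_of_low_dim}) for the open restrictions, with the same care about matching the twist $-V^\vee+\underline g$ against the relative cotangent bundle. The only cosmetic difference is that the paper first treats independence of $U$ for fixed $V$ as a separate step, whereas your single chain through $(V_1\oplus V_2, U_1\times U_2)$ subsumes it.
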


    \begin{proof}
      The statement for $H^G_{i,j}(X,\KMW_\ast,\cal L,a)$ follows from the main statement by choosing a rank zero $e$ with determinant isomorphic to $\cal L$. 

      The first point consists in proving that, given a representation $V$, our definition does not depend on the choice of $U$. Assume that $(V,U)$ and $(V,U')$ are both approximations in codimension $\leq \dim(X)-i+1$. Up to further restricting to $(V,U\cap U')$ we can immediately reduce to the case where $U' \subseteq U$. 
      
      Let us also denote by $Z$ and $Z'$ the complements in $V$ of $U$ and $U'$ respectively. We can apply Lemma \ref{lemma:general_indep_of_low_dim} to the decomposition $X\times _GU' \subseteq X\times_GU \supseteq X\times_G(U\cap Z')$. By assumption we have 
      \[\rm{rk}(V)-\dim(Z')> \dim(X)-i+1\] 
      which implies that 
      \[\dim(X\times(U\cap Z'))\leq \dim(X)+\dim(Z')<i-\rm{rk}(-V^\vee+e)-1, \]
      which in turn gives 
      \[\dim(X\times_G(U\cap Z'))<i-g+\rm{rk}(V^\vee)-1.\]
      Therefore, using \ref{lemma:general_indep_of_low_dim}, we can conclude that up to a canonical isomorphism the definition of $A_i^G(X, \K_\ast ,e)$ does not depend od the choice of $U$.

      Now we check that \Cref{def:equivariant groups} does not depend on the choice of $V$. If $(V,U_V)$ and $(W,U_W)$ are two approximations in codimension $\leq \dim(X)-i+1$, we can consider the approximation given by the open subscheme $U_{V\oplus W}:=\bb V(V)\times U_W\cap U_V\times\bb V(W)$ of $\bb V(V\oplus W)$, so that $(V\oplus W,U_{V\oplus W})$ in an approximation of $\scr B G$ in codimension $\leq \dim(X)-i+1$.
      
      We then consider the quotient of the $G$-equivariant diagram 
      \[\xymatrix{ X\times U_V & X\times U_V \times\bb(W) \ar[l]_{q'} & X\times U_{V\oplus W} \ar@{_{(}->}[l]_{j_V}\ar@{^{(}->}[r]^{j_W} & X\times \bb V(V)\times U_W \ar[r]^{p'}& X\times U_W}\]
      by the (free) diagonal action of $G$ on each term. Here the external maps $p'$ and $q'$ are vector bundles induced by $p:\bb V(V) \ra \ast$ and $q:\bb V(W) \ra \ast$ under pull-back, while the internal maps are the tautological open embeddings associated with the definition of $U_{V\oplus W}$. 
      
      On $\bb V(V\oplus W)$ we have natural $G$-isomorphisms of locally free sheaves,
      \[\xymatrix{\Omega_{q'} \oplus q'^\ast\Omega_{p} & \Omega_{\bb V(W\oplus V)}\ar[l]_{
      \simeq} \ar[r]^{\tau} &  \Omega_{\bb V(V\oplus W)} \ar[r]^(0.45){\simeq} & \Omega_{p'} \oplus p'^\ast\Omega_{q} }\]
      inducing isomorphisms
      \begin{equation}
            \label{eqn:paths_on_VW}
          -W^\vee-V^\vee+e\simeq -(W+V)^\vee+e \simeq -(V+W)^\vee+e\simeq -V^\vee -W^\vee +e 
      \end{equation}
      in $\underline K(X\times_G U_{V\oplus W})$. 
      
      Let now $l=\rk(V)$ and $m=\rk(W)$. By \ref{lemma:homotopy_invariance} and \ref{lemma:general_indep_of_low_dim} pulling back along $q'\circ j_V$ induces an isomorphism
      \begin{equation*}
          \begin{tikzcd}
              A_{i+l-g} (X\times_G U_V,\K_\ast, -V^\vee + \underline g+e) \arrow[r] & A_{i+l+m-g}(X\times_G U_{V\oplus W}, \K_\ast, -W^\vee-V^\vee+\underline g+e).
          \end{tikzcd}
      \end{equation*}
      Combining with the isomorphism induced by \eqref{eqn:paths_on_VW} and the inverse of the pull-back map along $p'\circ j_W$ concludes the argument.
    \end{proof}
    
    \subsubsection{}
    \label{subsub:ext_homology_to_eq_setting}
    The preceding proof, although formal, is very relevant for our paper. A similar technique can indeed be used to extend most properties of ordinary Chow-Witt groups of varieties, to the equivariant setting, provided that the property we are interested in be "compatible" with pulling back along a smooth map. This is completely analogous to what happens for Chow groups, both in terms of properties that can be extended to the equivariant setting, and both in the proof technique (see \cite[Section 2.3]{EG}). Here is how things work. Definitions, constructions and properties are first given in terms of a chosen scheme approximation $(V,U)$ where the classical theory with classical constructions can be used. One then has to check the independence of the choice of approximation. If $(V',U')$ is another scheme approximation, the two can be compared by passing to an open of $V\oplus V'$ via the zig-zag
    \[V \leftarrow V\oplus V' \rightarrow V'.\]
    For doing so the only operations that are required are pulling-back along vector bundle projections, and restricting along open subschemes (with complements of high enough codimension). 

    Following this strategy one can extend the cycle theory $H_{\ast_1,\ast_2}(-,\KMW_\ast,\bullet)$ to the equivariant setting of group actions (satisfying the assumptions of \ref{subs:scheme_approx_BG} and \ref{subs:assumptions_on_X}) and equivariant maps, or equivalently to the setting of quotient stack of finite type with representable morphisms. We gather here those properties of the resulting equivariant cycle theory that will be used in the rest of the paper.

    \begin{thm}
    \label{thm:equiv prop_verison2}
    Let $k$ be a perfect field of characteristic not $2$, and let $G$ be a smooth affine algebraic group of finite type over $k$. For every $G$-scheme $X$ of finite type over $k$ satisfying one of the assumptions of \ref{subs:assumptions_on_X}, and every pair of integers $i,j \in \bb Z$ we have a well defined group 
    \[ H_{i,j}^G(X,\KMW_*,\cal L,a),\]
    which is functorial in $(\cal L,a)$ and with the following extra structure.
     \begin{enumerate}
      \item \label{list:pGit_module} The functor $\bigoplus_{i,j} H_{i,j}^G(X,\KMW_*,-):\cal G(X) \ra \scr{AB}$ is a left module over the monoidal functor $\KMW_\ast(k,-): \cal G(k) \ra \scr{AB}$, where $\cal G$ has the monoidal structure introduced in \ref{subs:grading}, and $\scr{AB}$ has the usual monoidal structure.

      \item \label{list:pGit_pf}
      For every proper $G$-equivariant map $f:X\ra Y$ there is an induced push-forward homomorphism 
      $$ f_*:H_{i,j}^G(X,\KMW_*,f^*\Lcal,a)\ra H_{i,j}^G(Y,\KMW_*,\Lcal,a) $$
      which is functorial in $f$ and in $\cal L$, and is compatible with the module structure of \eqref{list:pGit_module}.
      \item \label{list:pGit_pb}
      For every $G$-equivariant map $f:X\ra Y$ that is smooth of relative dimension $d$, or a regular embedding of dimension $d$, there is an induced pullback homomorphism 
      \[f^*:H_{i,j}^G(Y,\KMW_\ast,\Lcal,a)\ra H_{i+d,j+d}^G(X,\KMW_\ast,D(L_{f})^{-1}\otimes f^*(\Lcal,a))\] 
      which is functorial in $f$, in $\cal L$, and is compatible with the module structure. In particular we have pullbacks along any map when $X$ and $Y$ are smooth, by factoring $f$ though its graph.

      \item \label{list:pGit_ls}
      Let $s:Y\hookrightarrow X$ be an equivariant closed embedding and let $r:X\smallsetminus Y\hookrightarrow X$ be the open embedding of the complement of $Y$ in $X$. There is a long exact sequence   
        \begin{equation*} 
          \begin{tikzcd}
            \ra  H_{i,j}^G(Y,\KMW_\ast,s^*\Lcal,a) \rar{s_*} &  H_{i,j}^G(X,\KMW_\ast, \Lcal,a) \rar
            \ar[draw=none]{d}[name=X, anchor=center]{}
            & H_{i,j}^G(X\smallsetminus Y,\KMW_\ast,r^\ast \Lcal,a) \ar[rounded corners,
                to path={ -- ([xshift=2ex]\tikztostart.east)
                    |- (X.center) \tikztonodes
                    -| ([xshift=-2ex]\tikztotarget.west)
                    -- (\tikztotarget)}]{dll}[at end]{} \\      
            \rar{s_*} H_{i-1,j}^G(Y,\KMW_\ast,s^*\Lcal,a) & H_{i-1,j}^G(X,\KMW_\ast,\Lcal,a) \rar & H_{i-1,j}^G(X\smallsetminus Y,\KMW_\ast,r^\ast \Lcal,a) \ra
          \end{tikzcd}
        \end{equation*}
      where the boundary morphisms are compatible with the module structure. The sequence is functorial in $X$ with respect to pulling back along $G$-equivariant maps $f: X' \ra X$ when $f$ is smooth of constant relative dimension, or when $f$ is a regular embedding of constant codimension and is Tor-independent with $r$.

      \item \label{lsit:pGit_hi}
      For every locally free sheaf $\cal E$ of rank $r$ on $X$, the projection $p:\bb V (\cal E) \ra X$ induces an isomorphism
      \[p^\ast: H_{i,j}^G(X,\KMW_*,\Lcal,a)\overset{\simeq}{\ra} H_{i+r,j+r}^G(\bb V(\cal E),\KMW_*,D(\cal E^\vee)^{-1} \otimes (\Lcal,a)).\]
      
      \item \label{list:pGit_ec} 
      For every locally free sheaf $\cal E$ of rank $r$ on $X$ there is a Euler class morphism
      \[e(\cal E):H_{i,j}^G(X,\KMW_*,\Lcal,a)\ra H_{i-r,j-r}^G(X,\KMW_*,D(\cal E^\vee)\otimes (\Lcal,a)), \]
      functorial in $(\cal L,a)$, compatible with pullbacks, satisfying a Whitney sum formula, and compatible with the module structure.
     
      \item \label{list:pGit_sq} 
      For every cartesian square of $G$-schemes
      \begin{equation*}
        \begin{tikzcd}
          X' \arrow[r,"f'"'] \arrow[d,"g'"] & Y' \arrow[d,"g"']\\
          X \arrow[r,"f"] & Y
        \end{tikzcd}
      \end{equation*}
      we have $g^\ast \circ f_\ast=f'_\ast\circ  g'^\ast$ when $f$ is proper and $g$ is smooth, or when $f$ is proper, $g$ is a local complete intersection and $f$ and $g$ are Tor-independent. 
      
      \item \label{list:pGit_ringstr}
      When $X$ is smooth the intersection product makes the functor $\bigoplus_{i,j} H^{i,j}_G(Y,\KMW_\ast,-): \cal G(X) \ra\scr{AB}$ into an algebra over the monoidal functor $\KMW_\ast(k,-): \cal G(k) \ra \scr{AB}$. When $f:X \ra Y$ is a an equivariant map between smooth schemes the map \eqref{list:pGit_pb} respects the intersection product.

      \item \label{list:pGit_projform}
      For every proper $G$-equivariant map $f: X \ra Y$ between smooth $G$-schemes we have equalities
      \[f_\ast(f^\ast(x)y)=xf_\ast(y) \quad f_\ast(x f^\ast(y))=f_\ast(x)y.\]
     \end{enumerate}   
    \end{thm}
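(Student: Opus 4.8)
The plan is to establish each item of \Cref{thm:equiv prop_verison2} by reducing it to the corresponding statement for ordinary homology of Milnor-Witt cycle modules on honest schemes, applied to a suitable equivariant scheme approximation $X \times_G U$, and then checking independence of the approximation using exactly the zig-zag technique recorded in \ref{subsub:ext_homology_to_eq_setting}. More precisely, for each structure (module structure, proper pushforward, smooth/l.c.i. pullback, localization, homotopy invariance, Euler classes, base change, ring structure, projection formula) one first fixes an approximation $(V,U)$ in codimension large enough (larger than $\dim X - i + 1$ and than any other index that appears), so that the operation in question is available on $X \times_G U$ by the classical theory of Feld \cite{feld1} or by the properties of Borel-Moore homology in \cite{djk}; then one observes that the chosen operation commutes with the two elementary moves --- pulling back along a vector-bundle projection and restricting to an open with high-codimension complement --- which relate any two approximations via $V \leftarrow V \oplus V' \rightarrow V'$. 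Since by \Cref{prop:indep_of_choices_in_def_of_equiv_hom} the groups themselves are canonically independent of $(V,U)$, and all the maps in question are compatible with the comparison isomorphisms, the induced structure descends to the equivariant groups. The twists are handled throughout via the recipe of \ref{subs:G_twists}: a $G$-line bundle or $G$-virtual bundle on $X$ (equivalently on $[X/G]$) pulls back to $X \times_G U$, and the relative cotangent bundle $L_{p \circ i}$ of the approximation contributes the universal twist $V^\vee$, which is precisely what \Cref{def:equivariant groups} builds in.

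Concretely, I would proceed item by item. For \eqref{list:pGit_module} the $\KMW_\ast(k,-)$-module structure on ordinary homology (functorial in $\cal G$) is transported along $X \times_G U \to \spec k$ and is plainly compatible with restriction and vector-bundle pullback. For \eqref{list:pGit_pf}, a proper $G$-equivariant $f : X \to Y$ induces a proper map $f_U : X \times_G U \to Y \times_G U$ on approximations (same $(V,U)$), and classical proper pushforward applies; functoriality and module-compatibility are inherited. For \eqref{list:pGit_pb} one uses smooth pullback / l.c.i. pullback on schemes together with the identification of $L_{f_U}$ with (the pullback of) $L_f$, so the twist $D(L_f)^{-1}$ appears correctly; the factoring-through-the-graph remark for smooth $X,Y$ is the classical one applied on $X \times_G U$. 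Item \eqref{list:pGit_ls} is the localization sequence of \cite[7.4]{feld1} applied to the closed embedding $Y \times_G U \hookrightarrow X \times_G U$; functoriality under smooth and Tor-independent-regular-embedding base change holds on schemes and is stable under the moves. Items \eqref{lsit:pGit_hi} and \eqref{list:pGit_ec} are \Cref{lemma:homotopy_invariance} and the Euler class formalism for Milnor-Witt cycle modules (e.g.\ as a composite $p^\ast$ followed by intersection with the zero section, or directly from \cite{djk}), transported to approximations; the Whitney formula and compatibility with pullback are classical. Item \eqref{list:pGit_sq} is proper base change on schemes applied to the approximation of the given cartesian square. Items \eqref{list:pGit_ringstr} and \eqref{list:pGit_projform} use that, for $X$ smooth, $X \times_G U$ is smooth, so the intersection product and the projection formula are available classically; the latter also requires $Y$ smooth so $Y \times_G U$ is smooth and $f_U$ is a proper map between smooth schemes.

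The only genuinely non-formal points, and hence the main obstacle, are the bookkeeping of \emph{twists and signs} and the \emph{uniform choice of approximation} across several indices. The twisting subtlety is that on $X \times_G U$ one must simultaneously carry the universal twist $V^\vee$ from the approximation \emph{and} the relative twist $L_f$ (or $\cal E^\vee$, or $L_{[X/G]/k}$ in the duality of \Cref{rmk:equivariant duality}) in the right order; since reordering virtual bundles introduces signs on Rost-Schmid homology (as flagged just after \Cref{defin:CW}), one must check that the comparison isomorphisms of \Cref{prop:indep_of_choices_in_def_of_equiv_hom} are compatible with these operations on the nose, not merely up to sign. The approximation-choice subtlety is that, e.g., a pushforward $f_\ast : H_{i,j}^G(X,-) \to H_{i,j}^G(Y,-)$ must use one $(V,U)$ that is simultaneously an admissible approximation for the source index $\dim X - i + 1$ and the target index $\dim Y - i + 1$, and similarly the localization sequence ties together three indices and two varieties; one always takes $(V,U)$ of codimension exceeding the maximum of all relevant bounds, which is possible by \ref{subs:scheme_approx_BG}, and then the classical construction is unambiguous. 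Once these two points are handled carefully for one representative item, the remaining items follow by the same template, and I would state them as such rather than repeating the argument in full.
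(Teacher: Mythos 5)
Your proposal is correct and follows essentially the same route as the paper: the paper likewise defines each operation on a single equivariant scheme approximation of sufficiently high codimension (chosen to accommodate all the varieties and indices involved), invokes the classical non-equivariant theory of \cite{FasLectures} and \cite{feld1} there, and deduces independence of the approximation by the zig-zag argument of \Cref{prop:indep_of_choices_in_def_of_equiv_hom}, exactly as in \cite[\S 2.3]{EG}. The paper only writes out item (2) in detail and declares the rest analogous, so your item-by-item sketch together with the remarks on twist bookkeeping is, if anything, more explicit than the published argument.
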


    \begin{proof}
        The non-equivariant properties can all be found in Section 2 and 3 of \cite{FasLectures}. Let us show how to prove (2), all the other statements of the theorem can be proved using exactly the same argument, which is taken from \cite[\S 2.3]{EG}.
        
        Pick an equivariant scheme approximation $(V,U)$ of $\cal B G$ in codimension $\leq \max\left\{\dim(X),\dim(Y)\right\}-i$, for some fixed integer $i$. Observe then that $X\times_G U$ and $Y\times_G U$ are both schemes; moreover, the proper morphism $f\times {\rm id}_U$ descends to a proper morphism $f_G:X\times_G U \ra Y\times_G U$ (see \cite[Proposition 2]{EG}). Also the $G$-equivariant line bundle on $Y$ descends to a line bundle $\cal L_G$ on $Y\times_G U$, and it is straightforward to check that $(f^*\cal L)_G = f_G^*(\cal L_G)$. We have then a well defined homomorphism 
        \[(f_G)_*: H_{i+l-g,j+l-g}(X,\KMW_*,f_G^* (D(V^{\vee})^{-1}\otimes (\cal L_G,a)) \ra  H_{i+l-g,j+l-g}(Y,\KMW_*, D(V^{\vee})^{-1}\otimes (\cal L_G,a)) \]
        and by definition the target (resp. the domain) is the equivariant homology group of $Y$ (resp. of $X$) of bidegree $(i,j)$ and twist $(\cal L,a)$ (resp. twist $f^*(\cal L,a)$).
    \end{proof}

    \begin{rmk}
        A version of Theorem \ref{thm:equiv prop_verison2} holds, for the same reasons mentioned in \ref{subsub:ext_homology_to_eq_setting}, for the homology of any Milnor-Witt cycle module with the appropriate shifts and twists. We don't state the theorem here in detail since there would be no gain for the reader. 
    \end{rmk}

        \subsection{Chow-Witt groups of quotient stacks}
        The next Proposition shows that equivariant Chow-Witt groups (and the other equivariant homology/cohomology groups) can be used to define Chow-Witt groups of quotient stacks $\scr X$. If $\scr X$ happens to be represetable by a scheme, the definitions in \ref{sub:chow-witt stack} below, coincide with the classical definitions.
        
    \begin{prop}\label{prop:invariance for quotients}
      Let $G,H$ be smooth affine algebraic groups over $k$, and let $X$ and $Y$ be respectively a $G$-variety and an $H$-variety satisfying any of the assumptions \ref{subs:assumptions_on_X}. Assume that there is an equivalence of stacks $\phi: [X/G]\ra [Y/H]$. Then the induced map $\phi_\ast$ on homology is an isomorphism. 
      
      If the stack $[X/G]$ is smooth or, equivalently, the scheme $X$ is smooth, then the same statement holds for the induced map $\phi^\ast$ on cohomology.
    \end{prop}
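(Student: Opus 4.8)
The plan is to mimic the proof of \Cref{prop:indep_of_choices_in_def_of_equiv_hom} (cf.\ \cite[Proposition 1]{EG}), replacing the comparison of two approximations of a \emph{fixed} presentation by a comparison of the two \emph{distinct} presentations $[X/G]$ and $[Y/H]$ of the same stack $\scr X:=[X/G]$ through a common refinement. Put $W:=X\times_{\scr X}Y$, where the map $Y\to\scr X$ is $\phi^{-1}$ followed by the torsor projection $Y\to[Y/H]$; since $H$ is affine this map is affine, hence so is its base change $W\to X$, and $W$ is a variety. By construction $W$ carries a $(G\times H)$-action for which $W\to X$ is an $H$-torsor and $W\to Y$ is a $G$-torsor, and the composite $W\to X\to\scr X\xrightarrow{\phi}[Y/H]$ is canonically $2$-isomorphic to $W\to Y\to[Y/H]$; consequently any $(\cal L,a)\in\cal G([Y/H])$ (and any $e\in\underline K([Y/H])$) pulls back to canonically identified data on $W$ whether routed through $X$ or through $Y$, and both agree with the pullback of $\phi^\ast(\cal L,a)$ (resp.\ $\phi^\ast e$) from $\scr X$.

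The first step is to construct the comparison isomorphism. Fix equivariant scheme approximations $(V,U)$ of $\scr BG$ and $(V',U')$ of $\scr BH$, both in codimension $\gg 0$, and put $l=\dim_kV$, $l'=\dim_kV'$. The associated bundle $\widetilde W:=W\times_HU'$ is, $G$-equivariantly, an open subscheme with complement of arbitrarily large codimension inside the rank-$l'$ $G$-equivariant vector bundle $W\times_H\bb V(V')\to X$; being open in a vector bundle over $X$ it again satisfies the relevant assumption of \ref{subs:assumptions_on_X}. By the equivariant homotopy invariance \Cref{thm:equiv prop_verison2}(\ref{lsit:pGit_hi}), combined with the equivariant form of \Cref{lemma:general_indep_of_low_dim} to handle the missing locus, pullback along $\widetilde W\to X$ is an isomorphism
\[H_{i,j}^G\bigl(X,\KMW_\ast,\phi^\ast(\cal L,a)\bigr)\;\xrightarrow{\sim}\;H_{i+l',\,j+l'}^G\bigl(\widetilde W,\KMW_\ast,D(\Omega^1_{\widetilde W/X})^{-1}\otimes\phi^\ast(\cal L,a)\bigr);\]
unwinding \Cref{def:equivariant groups} with the approximation $(V,U)$ of $\scr BG$, the target is the homology of $\widetilde W\times_GU=W\times_{G\times H}(U\times U')$ with an explicit shift and twist. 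Running the symmetric construction on the $Y$ side with $\widetilde W':=W\times_GU\to Y$ --- whose total space $\widetilde W'\times_HU'$ is the \emph{same} scheme $W\times_{G\times H}(U\times U')$ --- identifies $H_{i',j'}^H(Y,\KMW_\ast,\cal L,a)$ with the homology of that scheme, carrying its own shift and twist. Comparing the two yields the sought isomorphism $\phi_\ast$ (which shifts the homological degree by $\dim H-\dim G$, a shift that disappears once one passes to codimensional indexing since $\dim[X/G]=\dim[Y/H]$). Independence of $(V,U)$ and $(V',U')$, hence canonicity of $\phi_\ast$, follows from \Cref{prop:indep_of_choices_in_def_of_equiv_hom} and the naturality of the maps involved; the identical zig-zag proves the statement for the homology of an arbitrary Milnor-Witt cycle module.

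For the cohomological assertion, recall that $[X/G]$ is smooth precisely when $X$ is, in which case $Y$, $W$, $\widetilde W$, $\widetilde W'$ and all the approximations above are smooth. One can either repeat the zig-zag with the cohomological homotopy invariance in place of \Cref{lemma:homotopy_invariance}, or --- more economically --- invoke the duality of \Cref{rmk:equivariant duality}: cohomology is homology re-indexed by a twist by the cotangent complex of the stack, $\phi$ induces a canonical $L_{[X/G]/k}\simeq\phi^\ast L_{[Y/H]/k}$, and since $\dim[X/G]=\dim[Y/H]$ the homological statement translates into an isomorphism $\phi^\ast\colon\CHW_H^i(Y,\cal L,a)\xrightarrow{\sim}\CHW_G^i(X,\phi^\ast(\cal L,a))$ (and similarly for $H_G^{i,j}$) in the same degree $i$.

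The step I expect to be the main obstacle is the bookkeeping of twists (and of the degree shift) through the zig-zag: one must check that $\Omega^1_{\widetilde W/X}$ and $\Omega^1_{\widetilde W'/Y}$ are, under the identifications of \ref{subs:G_twists}, precisely the ``induced'' bundles $V'^\vee$ and $V^\vee$ occurring in \Cref{def:equivariant groups}, that the reordering sign for $V\oplus V'$ versus $V'\oplus V$ is the one recorded in \eqref{eqn:paths_on_VW}, and that the two pullbacks of the twisting datum to $W$ match compatibly with descent to $\widetilde W$ and to $\widetilde W'$. The remaining points --- that $W$ is a variety, that $\widetilde W$ and $\widetilde W'$ satisfy \ref{subs:assumptions_on_X}, and that all approximations can be chosen in codimension large enough for the low-dimensional invariance to bite in the relevant range of degrees --- are routine and exactly parallel \cite[\S 2.3]{EG}.
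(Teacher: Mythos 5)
Your proof is correct and is essentially the argument the paper itself intends: the paper's proof is a one-line reference to \cite[Proposition 16]{EG}, whose double-fibration trick (form $W=X\times_{[X/G]}Y$, which is simultaneously an $H$-torsor over $X$ and a $G$-torsor over $Y$, then compare through the mixed space $W\times_{G\times H}(U\times U')$) is exactly what you carry out, with the extra twist- and degree-bookkeeping needed in the Chow--Witt setting. You have simply written out the details the paper leaves to the reader.
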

    
    \begin{proof}
    Similar to its analogue in Chow theory: cf. for instance Proposition 16 of \cite{EG}.
    \end{proof}
    \subsubsection{}
    \label{sub:chow-witt stack}
      Let $\scr X\simeq [X/G]$ with $G$ satisfying the assumptions of \ref{subs:scheme_approx_BG}, and with $X$ satisfying one of the assumptions of \ref{subs:assumptions_on_X}. For any $e\in \underline K(\scr X)$, any $(\cal L,a) \in \cal G(\scr X)$, and integers $i$, $j$, we define
      \begin{equation*}
          H_{i,j}(\scr X,\KMW_\ast,\cal L,a ) := H^G_{i+g,j+g}(X,\KMW_\ast,\cal L,a)
      \end{equation*}
      By \Cref{prop:invariance for quotients} this Definition does not depend on the presentation of $\scr X$ as a quotient. 
      Further specializing to the case $i=j$, we come to a definition of Chow-Witt groups for quotient stacks:
      \begin{equation*}
          \CHW_i(\scr X,\cal L,a) := \CHW_{i+g}^G(X,\cal L,a)
      \end{equation*}
      For $\scr X$ smooth and equidimensional of dimension $d$, we also obtain a definition of $\KMW_\ast$-cohomology
      \begin{equation}
         H^{i,j}(\scr X,\KMW_\ast,\cal L,a ) := H_G^{i,j}(X,\KMW_\ast,\cal L,a)
      \end{equation}
      and of cohomological Chow-Witt groups
      \begin{equation}
        \CHW^i(\scr X,\cal L,a) := \CHW^{i}_G(X,\cal L,a)  
      \end{equation}
      Similarly for a Milnor-Witt cycle module $\K_\ast$ one defines 
      \[A_i(\scr X,\K_\ast, e):=A^G_{i+g}(X, \K_\ast,-\underline g +e ),\]
      and for a smooth and equidimensional $\scr X$
      \[A^i(\scr X, \K_\ast,e):=A_G^i(X,\K_\ast, +e).\]
      For $v \in \underline K(\scr X)$ we can compare the homological and the cohomological Chow-Witt groups as follows:
      \[ \CHW^i(\scr X,\cal L,a ) \simeq \CHW_{d-i}(\scr X,D(-L_{\scr X/k}) \otimes (\cal L,a) ).\]


    \begin{thm}
    \label{thm:equiv prop_verison2_stacks}
      All the statements of Theorem \ref{thm:equiv prop_verison2} hold unchanged if we replace $G$-varieties of finite type over $k$ by the associated quotient stacks of finite type over $k$, equivariant graded line bundles by graded line bundles on quotient stacks, and equivariant maps by representable morphisms.
    \end{thm}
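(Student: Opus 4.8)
The plan is to bootstrap from \Cref{thm:equiv prop_verison2} in exactly the way one passes from equivariant Chow groups to Chow groups of quotient stacks, the only genuinely new input being a device that turns a representable morphism of quotient stacks into an equivariant morphism of varieties for one and the same group. First I would fix a representable morphism $f\colon \scr X\to \scr Y$ together with a presentation $\scr Y\simeq[Y/H]$. Pulling back the $H$-torsor $Y\to[Y/H]$ along $f$ yields an $H$-torsor $P\to\scr X$ equipped with an $H$-equivariant morphism $\psi\colon P\to Y$; since $f$ is representable, $P$ is an algebraic space, and in the situations relevant here (and more generally whenever $P$ can be taken to satisfy one of the conditions of \ref{subs:assumptions_on_X}) it provides an admissible presentation $\scr X\simeq[P/H]$. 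By \Cref{prop:invariance for quotients} the homology and cohomology groups attached to $\scr X$ are independent of the admissible presentation, so one may freely compute with the morphism $[P/H]\to[Y/H]$ induced by the $H$-equivariant map $\psi$.

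Once both stacks are presented over the same group $H$, each item in the list becomes a formal consequence of its equivariant counterpart in \Cref{thm:equiv prop_verison2}: up to the shift by $\dim H$ fixed once and for all in \ref{sub:chow-witt stack}, the stack groups \emph{are} the equivariant groups of $P$ and of $Y$, and the module structure, the proper push-forward, the smooth (resp.\ regular-embedding, resp.\ lci) pull-back with its twist by $L_\psi$, the localization sequence, homotopy invariance, Euler classes, base change, the ring structure and the projection formula were all constructed there by descending the analogous operations along scheme approximations $P\times_H U\to Y\times_H U$. The only thing to verify is that the hypotheses decorating those items --- properness, smoothness, being a regular embedding or an lci morphism, Tor-independence, cartesianness of the auxiliary squares --- hold for $\psi$ (and for the induced squares of $H$-varieties) precisely when they hold for $f$ (and the corresponding squares of stacks). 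This is immediate because $P\to\scr X$ and $Y\to\scr Y$ are faithfully flat, in fact smooth surjective, so all of these properties are insensitive to such base change; the same observation, combined with the identity $(\psi^\ast\cal L)_H=\psi_H^\ast(\cal L_H)$ already used in the proof of \Cref{thm:equiv prop_verison2} and with the discussion of \ref{subs:G_twists}, handles the graded line bundles and the virtual bundles used as twists. Independence of the chosen scheme approximation and functoriality of the resulting operations then follow from \Cref{prop:indep_of_choices_in_def_of_equiv_hom}, exactly as in \ref{subsub:ext_homology_to_eq_setting}.

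I expect the only delicate point to be the reduction step itself: one must ensure that the pulled-back torsor $P$ remains inside the class of varieties for which the equivariant formalism of \Cref{sec:preliminaries_on_chow_witt_groups} has been set up, i.e.\ that $\scr X$ admits an admissible presentation over $H$ meeting one of the assumptions of \ref{subs:assumptions_on_X}. For the quotient stacks occurring in this paper this is transparent, since the relevant $P$ are always open subschemes of vector bundles or of affine spaces on which the group acts linearly; in general the point is that $H$ acts on $P$ with the linearization, normality or specialness properties inherited via $\psi$ from its action on $Y$. Granting this, every statement of \Cref{thm:equiv prop_verison2} transcribes verbatim to the quotient-stack setting, which is the content of the theorem.
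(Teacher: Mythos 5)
Your proposal is correct and follows essentially the same route as the paper: pull back the $H$-torsor $Y\to[Y/H]$ along the representable morphism $f$ to obtain an $H$-equivariant presentation $\scr X\simeq[P/H]$ with $f$ induced by an $H$-equivariant map $P\to Y$, then invoke \Cref{thm:equiv prop_verison2} (together with \Cref{prop:invariance for quotients} for independence of the presentation). Your extra remarks on the descent of properness, smoothness, Tor-independence, etc.\ along the smooth surjective covers, and on $P$ satisfying the assumptions of \ref{subs:assumptions_on_X}, only make explicit what the paper's one-paragraph proof leaves implicit.
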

    \begin{proof}
    If $f:\scr X \ra \scr Y\simeq [Y/G]$ is representable, we have that the pullback of the $G$-torsor $Y\ra [Y/G]$ along $f$ is a scheme $X$ together with a map $X\ra\scr X$ that makes it into a $G$-torsor over $\scr X$. In other terms, we have that $\scr X\simeq [X/G]$ and $f$ is induced by a $G$-equivariant morphism $X\ra Y$. Using this trick one reduces to the equivariant setting \ref{thm:equiv prop_verison2} both the construction of the functorialities and the checking of their properties.
    \end{proof}

\section{Some useful facts on Chow-Witt groups}
  \label{sec:preliminaries}
    We collect some basic results on Chow-Witt groups and related theories that we will use in the next sections. 

    In this Section and in the remainder of the paper, every time we make a statement concerning Chow-Witt groups of quotient stacks, we always assume that the quotient stack has a presentation of the form $[X/G]$ where the pair $(X,G)$ satisfies at least one of the assumptions in \ref{subs:assumptions_on_X}.

  \subsection{Some Milnor-Witt cycle modules}
    In addition to the Milnor-Witt cycle module $\KMW_\ast$ we have the so called "powers of the fundamental ideal" $\I^\ast$. Here $\I^r(E)\subseteq \W(E)$ denotes the $r$-th power of the fundamental ideal of the Witt ring of the field $E$, with the convention that negative powers be defined as $\W(E)$. The elements $u\in E^\times$ act on $\W(E)$ via multiplication by the rank-one form $\kc u$, and they act on any $E$-vector space via multiplication by $u$. For every fintely generated field extension $E/k$, the Milnor-Witt cycle module $\I^\ast: \underline K(E) \ra \scr{AB}$ assigns 
    \[v \mapsto \I^{\rk(v)}(E)\otimes _{\bb Z[E\setminus \{0\}]} \bb Z[\det(v)\setminus\{0\}].\]
    One can turn the above assignment into a Milnor-Witt cycle module by following the steps of Theorem 4.13 of \cite{feld1}, or alternatively identifying the above assignment with the image of $\KMW_\ast$ in the cycle module $\KMW_\ast[\eta^{-1}]$ (cf. Remark 3.12 of \cite{morel:A1at} and \cite{MR2099118}). 

    Further examples of Milnor-Witt cycle modules are those of Milnor $K$-theory $\KM_\ast$, and of Milnor $K$-theory modulo $2$, denoted by $\Km_\ast$. These are actually cycle modules according to Rost's definition (Section 2 of \cite{rost}), and in particular they are Milnor-Witt cycle modules (cf. section 12 of \cite{feld1}). 

    We have canonical maps
    \begin{equation*}
    \begin{tikzcd}
    \I^\ast \arrow[r, "q"] & \Km_\ast & \KM_\ast \arrow[l,"p"']
    \end{tikzcd}
    \end{equation*}
    where $q$ is induced by mapping the Pfister form $\kc{\kc{u}}$ to the symbol $\{u\}$, and $p$ is simply the reduction modulo two. We also have canonical maps
    \begin{equation*}
    \begin{tikzcd}
    \I^\ast & \KMW_\ast \arrow[l, "p'"'] \arrow[r,"q'"] & \KM_\ast 
    \end{tikzcd}
    \end{equation*}
    where $p'$ is induced by mapping $[u] \mapsto -\kc{\kc{u}}$, and $q'$ by $\eta\mapsto 0$. 

    To the square of Milnor-Witt cycle modules
    \begin{equation}
    \label{eq:diag morel}
    \begin{tikzcd}
      \KMW_\ast\arrow[d, "p'"'] \arrow[r,"q'"] & \KM_\ast \ar[d,"p"]\\
      \I^\ast \arrow[r,"q"''] & \Km_\ast 
    \end{tikzcd}
    \end{equation}
     we associate the corresponding square of their cycle complexes associated to a variety $X$ of finite type. For convenience of notation we set
     \[C_\bullet(X,\K_\ast,j,\cal L,a) \]
     to be the complex defined analogously to \eqref{eqn:KMW-cyccle_complex} for $\K_\ast \in \{\I^\ast,\KM_\ast, \Km_\ast\}$. We are going to use bigraded and twisted homology of these complexes, as we did for $\KMW_\ast$, and exactly with the same notation as have introduced in \ref{defin:CW} and in \ref{defin:coh_RS_cpx}. We don't repeat all these definitions here. It is obvious but important to remark that all the results of Section 1, and in particular Theorems \ref{thm:equiv prop_verison2} and \ref{thm:equiv prop_verison2_stacks}, hold for $\K_\ast \in \{\I^\ast,\KM_\ast, \Km_\ast\}$ even with this new notation. By design the twising don not matter on Rost cycle modules, so we directly omit them form notation regarding $\KM_\ast$ and $\Km_\ast$.

     In a sense this definition is redundant, since we could use the notation $A_i(-)$ and $A^i(-)$ for our purposes. However with the aim of performing some concrete computations, we find it easier to stick to the twisting by graded line bundles rather then by virtual vector bundles.

  \subsubsection{}
    \label{sub:shopping_list_of_cycle_modules}
    In Theorem 5.3 of \cite{MR2099118} Morel proves that the square \eqref{eq:diag morel}
    is cartesian. Observe that all the arrows in this square are surjective. In particular there is an induced homotopy cartesian square of the associated Rost-Schmid complexes. Indeed for the projective model structure on bounded above chain complexes (introduced in Chapter 2, pages 4.11-12 of \cite{quillen:ha}), degree-wise surjections are fibrations. If $X$ is a $k$-variety of finite type we have thus an exact triangle of Rost-Schmid complexes
    \[C_\bullet(X,\KMW_\ast,j,\cal L,a) \ra C_\bullet(X,\I^\ast,j,\cal L,a)\oplus C_\bullet(X,\KM_\ast,j) \ra C_\bullet(X,\Km_\ast,j)\]

    A bit of diagram chasing, together with the functoriality of cycle complexes, easily gives the following result.
    
    \begin{prop}\label{prop:fundamental diagram}
      Let $X$ be either a scheme or a quotient stack of finite type over $k$. The square \eqref{eq:diag morel} induces a canonical exact sequence
      \begin{equation}
      \label{eqn:fundseq}
      \begin{tikzcd}
        0 \arrow[r] & K_i(X,\cal L,a)\arrow[r] & \CHW_i(X,\cal L,a) \arrow[r,"{(p',q')}"] & P_i(X,\cal L,a)\arrow[r] & 0
      \end{tikzcd}
      \end{equation}
      where $P_i(X,\cal L)$ is the pull-back of 
      \begin{equation}
        \begin{tikzcd}
              H_{i,i}(X,\I^\ast,\cal L,a)\arrow[r,"q"] & \CH_i(X)\otimes\bb Z/2 & \CH_i(X),\arrow[l,"p"']
        \end{tikzcd}
      \end{equation}
      and 
      \[K_i(X,\Lcal,a)\simeq\rm{coker}\Big( H_{i+1,i}(X,\I^\ast,\Lcal,a) \oplus H_{i+1,i}(X,\KM_\ast) \overset{p+q}{\ra}  H_{i+1,i}(X,\Km_\ast)\Big ). \]
       Alternatively we can canonically describe the term $K_i(X,\cal L,a)$ as a quotient of $\CH_i(X)[2]$.
      The sequence \eqref{eqn:fundseq} is functorial for pull-backs along regular embeddings and along smooth maps, and for push-forwards along proper maps. When $X$ is smooth the map $(p',q')$ at the right hand side of \eqref{eqn:fundseq} respects the intersection product.
        
    \end{prop}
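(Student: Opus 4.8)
The plan is to produce the four-term exact sequence \eqref{eqn:fundseq} by reading off the long exact homology sequence of the exact triangle of Rost--Schmid complexes recorded in \ref{sub:shopping_list_of_cycle_modules}. Since Morel's square \eqref{eq:diag morel} is cartesian and all four of its arrows are surjective (cf. Theorem 5.3 of \cite{MR2099118}), applying the cycle complex functor degreewise produces, for any $k$-variety $X$, not merely a triangle but an \emph{honest} short exact sequence of complexes
\[ 0 \to C_\bullet(X,\KMW_\ast,j,\cal L,a) \xrightarrow{(p',q')} C_\bullet(X,\I^\ast,j,\cal L,a)\oplus C_\bullet(X,\KM_\ast,j) \xrightarrow{q-p} C_\bullet(X,\Km_\ast,j) \to 0 , \]
which inherits naturality for smooth pullback, for pullback along regular embeddings, and for proper pushforward, since each of $\KMW_\ast,\I^\ast,\KM_\ast,\Km_\ast$ carries these operations and the maps of \eqref{eq:diag morel} are morphisms of Milnor--Witt cycle modules. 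Taking homology in the diagonal twist, and using the elementary cycle-level identifications $H_{i,i}(X,\KM_\ast)\simeq\CH_i(X)$ and $H_{i,i}(X,\Km_\ast)\simeq\CH_i(X)\otimes\bb Z/2$ — the latter because the quotient of $\bigoplus_{x\in X_{(i)}}\bb Z/2$ by the image of $\bigoplus_{x\in X_{(i+1)}}k(x)^\times/2$ is right-exactly $\CH_i(X)\otimes\bb Z/2$, with the map out of $H_{i,i}(X,\KM_\ast)$ being reduction mod $2$ — the relevant stretch of the long exact sequence reads
\[ H_{i+1,i}(X,\I^\ast,\cal L,a)\oplus H_{i+1,i}(X,\KM_\ast) \xrightarrow{p+q} H_{i+1,i}(X,\Km_\ast) \xrightarrow{\partial} \CHW_i(X,\cal L,a) \xrightarrow{(p',q')} H_{i,i}(X,\I^\ast,\cal L,a)\oplus \CH_i(X) \xrightarrow{q-p} \CH_i(X)\otimes\bb Z/2 , \]
all signs being immaterial as $\Km_\ast$, hence its cycle homology, is $2$-torsion.

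Exactness then yields the two halves of the statement directly: $\im(p',q')=\ker(q-p)$, which is by definition the fibre product $P_i(X,\cal L,a)$ of $q$ and $p$ over $\CH_i(X)\otimes\bb Z/2$, and $\ker(p',q')=\im(\partial)\simeq H_{i+1,i}(X,\Km_\ast)/\im(p+q)=\coker(p+q)$, which is the claimed formula for $K_i(X,\cal L,a)$; splicing gives \eqref{eqn:fundseq}. For the alternative description of $K_i(X,\cal L,a)$ as a quotient of $\CH_i(X)[2]$, I would run the same mechanism with the mod-$2$ reduction sequence $0\to 2\KM_\ast\to\KM_\ast\to\Km_\ast\to 0$: its boundary map identifies $\coker\big(H_{i+1,i}(X,\KM_\ast)\xrightarrow{p}H_{i+1,i}(X,\Km_\ast)\big)$ with $\ker\big(\CH_i(X)\xrightarrow{\cdot 2}\CH_i(X)\big)=\CH_i(X)[2]$ — a short Bockstein computation whose only input is that the complexes computing the diagonal homology of $\KM_\ast$, $2\KM_\ast$, $\KM_\ast[2]$ vanish in low homological degrees (below $i$, respectively at and below $i$ for the torsion term). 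Then $K_i(X,\cal L,a)=\coker(p+q)$ is the further quotient of $\CH_i(X)[2]$ by the image of $H_{i+1,i}(X,\I^\ast,\cal L,a)$; this is the bit of bookkeeping the statement alludes to.

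To pass from varieties to quotient stacks there is nothing new: I would evaluate the short exact sequence of complexes on an equivariant scheme approximation $X\times_G U$ with $U$ in codimension $\le\dim(X)-i+1$, observe that by Definition \ref{def:equivariant groups} the resulting sequence for $X\times_G U$ is, term by term, the sequence for $\scr X=[X/G]$ (independence of the approximation being Proposition \ref{prop:indep_of_choices_in_def_of_equiv_hom} together with Lemma \ref{lemma:general_indep_of_low_dim}), and note that the naturality of the sequence survives this stabilization exactly as in \ref{subsub:ext_homology_to_eq_setting}, so that \eqref{eqn:fundseq} inherits functoriality for smooth pullback, pullback along regular embeddings and proper pushforward, in the stacky sense of Theorem \ref{thm:equiv prop_verison2}. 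Finally, when $X$ — equivalently $\scr X$ — is smooth, the cohomological cycle groups of all four cycle modules carry intersection products, and since $p'$ and $q'$ come from the cycle-module morphisms $\KMW_\ast\to\I^\ast$ and $\KMW_\ast\to\KM_\ast$ they are ring homomorphisms, so $(p',q')$ is multiplicative. The main point to watch throughout is notational — $\KM_\ast,\Km_\ast$ are untwisted while $\KMW_\ast,\I^\ast$ are not — together with the verification that $H_{i,i}(X,\Km_\ast)$ genuinely equals $\CH_i(X)\otimes\bb Z/2$, which is exactly what lets the fibre product in the statement be formed over that group rather than over a potentially larger one; beyond this there is no real obstacle, the substance being Morel's cartesianness of \eqref{eq:diag morel}.
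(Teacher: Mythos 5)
Your argument is correct and is essentially the paper's: the paper handles the variety case by citing Proposition 2.11 of \cite{HW} (whose content is precisely your step of turning the cartesian square with surjective arrows into a degreewise short exact sequence of Rost--Schmid complexes and reading off the relevant stretch of the long exact sequence, with $H_{i,i}(X,\KM_\ast)\simeq\CH_i(X)$ and $H_{i,i}(X,\Km_\ast)\simeq\Ch_i(X)$), derives functoriality and multiplicativity from the cycle-module nature of the maps in \eqref{eq:diag morel}, and passes to quotient stacks via equivariant scheme approximations exactly as you do. Your Bockstein identification of $K_i(X,\cal L,a)$ with a quotient of $\CH_i(X)[2]$ likewise matches the paper's intended alternative description.
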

    \begin{proof}When $X$ is a variety the proof proceeds as in Proposition 2.11 of \cite{HW}, using the above exact triangle. The functoriality of \eqref{eqn:fundseq} is a direct consequence of the functoriality of Rost-Schmid complexes and the compatibility of the maps in \eqref{eq:diag morel} with pull-backs and push-forwards. The compatibility with  intersection products in the smooth case follows from the fact that the maps in \eqref{eq:diag morel} are ring homomorphisms. We only need to prove the statement for quotient stacks $\scr X=[X/G]$. For this, choose an equivariant scheme approximation $X\times_G U$ of $[X/G]$: then the square \eqref{eq:diag morel} induces a short exact sequence for $X\times_G U$, and the following statements hold as well. As all the groups involved are defined via cohomology of Milnor and Milnor-Witt cycle modules, they are also well defined for $[X/G]$ via equivariant scheme approximation, thanks to \Cref{prop:invariance for quotients} and thanks to the pull-back functoriality of \eqref{eqn:fundseq} for smooth maps on varieties. We can then just define the maps at the level of equivariant scheme approximations, and the statement for quotient stacks follows.
        \end{proof}
    In particular, when the group $K_i(X,\cal L,a)$ is zero, the Chow-Witt group $\CHW_i(X,\cal L,a)$ can be regarded as the subgroup of the product
    \[ \CH_i(X) \times H_{i,i}(X,\I^{\ast},\cal L,a), \]
    formed by those pairs whose entries have the same image in $\CH_i(X)\otimes \bb Z/2$. This description can be quite useful: for instance, most of the computations in \cite{HW} are based on this fact.

    \subsection{On Chow-Witt groups of complements of zero sections }
    \subsubsection{}
    Recall from \cite[Section 2.5]{FasLectures} that when $X$ is an algebraic variety and $\cal E$ is a rank $r$ vector bundle on $X$, one can define the so called \emph{Euler homomorphism}, denoted by
    \[e(\cal E)\cap-: H_{i,j}(X,\KMW_\ast,\cal L,a) \ra H_{i-r,j-r}(X,\KMW_\ast, D(\cal E^\vee) \otimes \cal L,a).\]
    When the pullback along $\pi:\bb V(\cal E)\ra X$ is an isomorphism, the Euler homomorphism coincides with the composition of the push-forward along the zero section $s_0: X \ra \bb V(\cal E)$ with the inverse of $\pi^*$. When $X$ is smooth, $e(\cal E)\cap 1 \in \CHW^r(X,D(\cal E^\vee))$ is called \emph{Euler class} of $\cal E$. By combining the projection formula, homotopy invariance, and the fact that pulling-back is a ring homomorphism, one sees that $e(\cal E)\cap-$ actually amounts to taking the intersection product with the Euler class $e(\cal E)$. 

    If $ {\K}_\ast$ is any generalized cycle module, there is a similarly defined operation $e^{\K_\ast}(\cal E)\cap-$ in $ {\rm K}_\ast$-homology. If $X$ is smooth and ${\rm K}_\ast$ is a commutative monoid in Milnor-Witt cycle modules (with respect to the monoidal structure induced on Milnor-Witt cycle modules through the equivalence of Section 4 of \cite{feld2}), as it happens for all the examples of \ref{sub:shopping_list_of_cycle_modules}, the image of the $\KMW_\ast$-theoretic Euler class coincides with the $ \K_\ast$-theoretic Euler class. For instance, in the case $ {\rm K}_\ast\in \{ \KM_\ast, \Km_\ast\}$, the $\K_\ast$-theoretic Euler class is the usual top Chern class. The same observations hold for quotient stacks.

    \subsubsection{}
    \label{sub:can_loc_sequence_on_a_bundle}
    Assume now that $\cal E$ is a vector bundle of rank $r$ on a smooth variety or a smooth quotient stack $X$, and that $\op K_\ast \in \{\KMW_\ast,\I^\ast, \KM_\ast,\Km_\ast\}$. By combining the projection formula with homotopy invariance, the localization sequence associated to the decomposition $X \overset{s_0}{\hra} \bb V(\cal E)\overset{j}{\hla} \bb V(\cal E)\smallsetminus s_0(X)$ is identified with 
    \[\xymatrix@C=2em{\cdots \ar[r] & H^{i-r,j-r}(X,\K_\ast,D(\cal E)\otimes (\cal L,a)) \ar[r]^(.6){e(\cal E)\cdot} & H^{i,j}(X,\K_\ast,\cal L,a) \ar[r]^(.4){(j\circ p)^\ast} & H^{i,j}(\bb V(\cal E)\smallsetminus s_0(X),\K_\ast,\cal L,a) \ar[r]^(0.8){\partial} & \cdots}\]
    The map $(j\circ p)^\ast$, considered on the whole bi-graded and twisted cohomology ring, is a ring map with respect to the intersection product, and from the exactness of the above sequence we conclude that $\ker((j\circ p)^\ast)$ is the principal graded ideal generated by $e(\cal E)$. In particular we have a canonical extension
    \[\xymatrix@C=2em{0 \ar[r] & H^{\ast_1,\ast_2}(X, \KMW_\ast,\bullet)/(e(\cal E)) \ar[r]^(0.47){(j\circ p)^\ast}&  H^{\ast_1,\ast_2}(\bb V(\cal E)\smallsetminus s_0(X), \KMW_\ast,\bullet) \ar[r]^(.65){\partial} & \ker(e(\cal E)\cdot) \ar[r] & 0. }\]
    If we take $\K_\ast=\KMW_*$ and $i=j$, the sequence above reads as
    \[\xymatrix@C=0.9em{
    \CHW^{i-r}(X,D(\cal E)\otimes (\cal L,a)) \ar[r]^(.63){e\cdot} & \CHW^i(X,\cal L,a) \ar[r] & \CHW^i(\bb V(\cal E)\smallsetminus s_0(X),\cal L,a) \ar[r]^(0.47){\partial} &  H^{i,i-1}(X,\KMW_\ast, \cal D(\cal E)\otimes(\cal L,a))\ar[r] & \cdots
    }\]
    
\subsection{A useful Lemma}
\subsubsection{} 
    The following result can be used to produce a natural class in the Chow-Witt group of codimension $0$ cycles.
    
    \begin{lemma}\label{lemma:splitting CW0}
    Let $X$ be either a smooth scheme or a smooth quotient stack, endowed with a line bundle $\Lcal$, satisfying the assumptions of \ref{subs:assumptions_on_X}. Let $s$ be a global section of $\Lcal^{\otimes 2}$ whose vanishing locus $D\subset X$ is smooth and non-empty. Set $U:=X\smallsetminus D$. Then:
    \begin{enumerate}
        \item the non-degenerate quadratic form on $\Lcal_U$ defined by
    \[q:\Lcal_{|U}\otimes\Lcal_{|U}\ra\cal O_U,\quad a\otimes b\longmapsto (a \otimes b)/s \] 
    determines an element $q_{gen}$ in $\CHW^0(X\smallsetminus D)$.
    
    \item We have $h(q_{gen}-1)=0$ and $\partial(q_{gen}-1)=\eta\otimes \overline{f}^{\vee}$, where
    \[ \partial:\CHW^0(X\smallsetminus D)\ra H^0(D,\KMW_{-1},\cal O_D(D))\]
    is the boundary map induced by the closed embedding $D\hookrightarrow X$, and $f$ is a local equation of $D$.
    \end{enumerate} 
    \end{lemma}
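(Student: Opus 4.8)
The plan is to build $q_{gen}$ directly from the rank-one quadratic bundle $q$, and then to read off both $h\,(q_{gen}-1)$ and $\partial(q_{gen}-1)$ from a local computation in the Rost--Schmid complex. First, when $X$ is a quotient stack, fix an equivariant scheme approximation whose bad locus has codimension $\gg 1$; since $\Lcal$, $s$, the divisor $D$ and the localization sequence of $D\hookrightarrow X$ all pull back from the stack to the approximation, and since $\CHW^0$ and $H^0(D,\KMW_{-1},-)$ are computed on the approximation by \Cref{prop:invariance for quotients}, it is enough to argue for a smooth scheme $X$. To get (1), note that $s$ is a nowhere-vanishing section of $\Lcal^{\otimes 2}$ on $U=X\smallsetminus D$, so $q$ is a non-degenerate symmetric bilinear (equivalently, since $\chr k\neq 2$, quadratic) form of rank one on $\Lcal_{|U}$ with values in $\Ocal_U$. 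By the Bloch-type formula \ref{subsub:BlochFormula} we have $\CHW^0(U)=\op H^0_{Zar}(U,\mathcal K^{\rm MW}_{0,U})$, and $\mathcal K^{\rm MW}_0$ is the unramified Grothendieck--Witt sheaf; thus it suffices to exhibit a global section. Choosing a trivializing cover $\{V_i\}$ of $\Lcal$ with generators $e_i$ and writing $s_{|V_i}=f_i\,e_i^{\otimes 2}$ (so $f_i$ is a local equation of $D$ on $V_i$), the form $q$ is represented on $V_i\cap U$ by $\kc{f_i^{-1}}=\kc{f_i}$; on overlaps $f_i$ and $f_j$ differ by the square of the transition function of $\Lcal$, hence $\kc{f_i}=\kc{f_j}$, and the local classes glue to the desired $q_{gen}\in\CHW^0(U)$.

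For the first equality in (2), it is enough to check $h\,(q_{gen}-1)=0$ on each $V_i\cap U$, where $q_{gen}-1$ is represented by $\kc{f_i}-1=\eta[f_i]\in\KMW_0$; the Milnor--Witt relation $\eta h=0$ then gives $h\cdot\eta[f_i]=0$. (Equivalently, $h\kc u=h$ for every unit $u$, so $h\cdot q_{gen}=h=h\cdot 1$ locally, hence globally on $U$.)

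For the boundary statement, the map $\partial$ of the localization sequence for $D\hookrightarrow X\hookleftarrow U$ agrees, after the purity identification $\op H^1_D(X,\KMW_0)\simeq H^0(D,\KMW_{-1},\Ocal_D(D))$, with the Rost--Schmid differential of $C^\bullet(X,\KMW_\ast,0,\Ocal_X)$ evaluated at the generic points of $D$. Near the generic point $\eta_D$ of a component of $D$, the class $q_{gen}-1$ is represented by $\eta[f]$ with $f=f_i$ a local equation of $D$, hence a uniformizer of $\Ocal_{X,\eta_D}$. The standard computation of the residue on $\KMW_\ast$-symbols (cf.\ \S2.1 of \cite{FasLectures}) gives $\partial(\eta[f])=\eta\cdot\partial([f])=\eta\otimes\overline f$, where $\overline f$ is the canonical generator of the conormal $\mathcal I_D/\mathcal I_D^2$; under the canonical identification $\KMW_{-1}(k(D),\mathcal I_D/\mathcal I_D^2)\simeq\KMW_{-1}(k(D),\Ocal_D(D))$ (invariance of twisted Milnor--Witt $K$-theory under twisting by squares of line bundles) the generator $\overline f$ goes to the dual generator $\overline f^{\vee}$ of $\Ocal_D(D)=\det N_{D/X}$. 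This yields $\partial(q_{gen}-1)=\eta\otimes\overline f^{\vee}$, as claimed.

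The part that really needs care is the bookkeeping of twists and signs: that the local forms $\kc{f_i}$ glue without ambiguity, that the intrinsic twist $\mathcal I_D/\mathcal I_D^2$ of the Rost--Schmid residue is matched correctly (and with the right sign in the normalization of the residue on $[f]$) with the twist $\Ocal_D(D)$ in the statement, and that all these identifications are stable under descent along the equivariant scheme approximation in the stacky case. Once the conventions of \ref{subs:grading}, \ref{subsub:rs_complex_twisted_by_vb} and \cite{FasLectures} are fixed, each of these is routine, but they are the only nontrivial points.
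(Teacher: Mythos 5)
Your proposal is correct and follows essentially the same route as the paper: reduce the stacky case to a scheme via an equivariant approximation, represent $q$ locally by $\kc{f^{-1}}=\kc{f}$ for a local equation $f$ of $D$ (well-defined since different trivializations change $f$ by a square), observe unramifiedness away from $D$, use $\eta h=0$ for the first identity, and compute the residue of $\eta[f]$ at the generic point of $D$ for the second. The only cosmetic difference is that you glue local sections of the unramified sheaf $\cal K^{\rm MW}_0$ over a trivializing cover, whereas the paper works with a single element of $\GW(k(X))$ and checks its residues vanish at all codimension-one points of $X\smallsetminus D$; these amount to the same thing.
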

    
    \begin{proof}
    The version of the Lemma for smooth quotient stacks follows from the one for schemes, by reducing the statement to a statement on the equivariant scheme approximation of the stack.

    Let us sketch the argument a bit more in detail: suppose that the Lemma is true in the case of schemes and let $\scr X=[X/G]$ be a quotient stack satisfying the hypotheses of the Lemma. In particular, if $\scr D$ is a divisor which is the vanishing locus of a section of $\Lcal$, we have $\scr D=[D/G]$ for some $G$-invariant divisor $D\subset X$, obtained as the vanishing locus of a section of an equivariant line bundle $L$. 
    
    We can pick an equivariant scheme approximation $(V,W)$ of $\scr B G$ such that the codimension of the complement of $W$ in $V$ is greater than $2$. Therefore, we have by definition that \[\CHW^0(\scr X\smallsetminus \scr D)=\CHW^0_G(X\smallsetminus D)=\CHW^0((X\smallsetminus D)\times_G W).\]
    The smooth scheme $(X\smallsetminus D)\times_G W$ together with the line bundle $\cal L\times_G W$ (whose vanishing locus is precisely $D\times_G W$) satisfy the hypotheses of the Lemma, so we have a well defined class in $\CHW^0((X\smallsetminus D)\times_G W)$ with all the desired properties, hence a class in $\CHW^0(\scr X\smallsetminus \scr D)$.
    
    Let us then assume that $X$ is a scheme, and pick an affine open subscheme $V\subset X$ whose intersection with $D$ is non-empty and such that there exists a trivialization
    \[\cal O_V \simeq \Lcal_V,\quad 1 \longmapsto t. \]
    Then by construction we have $s=ft^{\otimes2}$, where $f$ is an equation for $D\cap V$. Moreover, after trivializing $\Lcal_V$ in this way, the quadratic form $q$ that we defined is given by
    \[ \cal O_{U\cap V}\otimes \cal O_{U\cap V} \longrightarrow \cal O_{U\cap V},\quad g\otimes g'\longmapsto \frac{gg'}{f} .  \]
    The quadratic form $\langle f^{-1} \rangle$, regarded as an element of $\GW(k(X))\simeq \KMW_0(k(X))$, has residue $0$ along all the codimension $1$ points of $X\smallsetminus D$. In other terms, $\langle f^{-1}\rangle$ determines an element of $\CHW^0(X\smallsetminus D)$. 
    
    Observe that this element does not depend on the choice of an affine open subscheme $V$ and of a trivialization $\cal O_{V}\simeq\Lcal_{V}$: indeed, if we picked a different subcheme and a different trivialization, the resulting quadratic form would differ from the previous one by a square and hence it would determine the same element in $\GW(k(X))$.
    
    We have $h(q_{gen}-1)=h\cdot\eta[f^{-1}]=0$. The image of $q_{gen}-1=\eta[f^{-1}]$ along the boundary morphism
    \[ \CHW^0(X\smallsetminus D)\ra H^0(D,\KMW_{-1},\cal O_D(D)) \]
    is equal by construction to the residue of this element at the point $\spec{k(D)}$ of codimension $1$ (\cite[Subsection 2.2]{FasLectures}). Observe that $f$ is a local parameter for the valuation $\nu:k(X)\ra \ZZ\cup\{\infty\}$ induced by the Cartier divisor $D\subset X$. Applying the formula for residues (\cite[Subsection 2.1]{FasLectures}) we obtain
    \[ \partial_\nu(\eta[f^{-1}])=\partial_\nu^{f}(\eta[f^{-1}])\otimes \overline{f}^{\vee}=\eta\cdot\partial_\nu [f^{-1}] \otimes \overline{f}^{\vee}= \eta\otimes \overline{f}^{\vee}. \]
\end{proof}
\begin{rmk}
      Under the isomorphisms
      \[H^0(D,\KMW_{-1},\cal O_D(D)) {\ra} H^0(D,\W,\cal O_D(D)) \overset{\phi_l}{\ra} H^0(D,\W,\cal O))\]
      the element $\eta\otimes \bar f^\vee$ is mapped first to $1\otimes \bar f^\vee$ and then to $\langle \lambda_l \rangle\otimes 1$. Here $l$ is a generic section of $
      \cal L|_{D}$, and $\phi_l$ is the isomorphism induced by $l$ and the identification $\cal O_D(D)\simeq \cal L|_{D}$.
\end{rmk}

\section{Geometric Preliminaries} 
\label{sec:stacks_of_elliptic_curves}
In this Section we first recall some basic facts on the geometry of $\scr B\Gm$ and $\scr B\mu_{n}$, and in particular how to present $\scr B\mu_{n}$ as a $\Gm$-quotient stack. Next we recall some basic facts on the stack $\Mcal_{1,1}$ of elliptic curves and on its compactification $\Mbar_{1,1}$ by means of stable curves. For sake of simplicity we assume to work over a base field $k$ where $n$ is invertible; all schemes are understood as schemes over $k$.

\subsection{Classifying stacks of $\Gm$ and $\mu_{2n}$}
We recall here some basic facts on the classifying stacks of $\Gm$ and $\mu_{2n}$; these already appeared in the literature, see for instance the beginning of Section 7 of \cite{Bro} or Section 2.4 of \cite{TotBook}.
    \subsubsection{}
    \label{sub:GmReps_and_approx}
    Let $\bb G_m$ be the multiplicative group and let $\mu_n$ be the subgroup of $n$-th roots of unity. Along this paper we will deal only with actions of these two groups. We denote by $V_n$ the rank one representation of $\Gm$ of weight $n$, i.e. the representation on which $\Gm$ acts by $ \lambda\cdot v:=\lambda^nv$. We will then denote by $\bb V_n$ the associated $\Gm$-equivariant vector bundle over $\spec(k)$. Similarly we set $V_{m,n}:=V_{m}\oplus V_{n}$, and $\bb V_{m,n}$ will denote the associated $\Gm$-vector bundle.

    The diagonal action of $\bb G_m$ on $\bb V_1^n$ has the origin as the only fixed point. Set $U_n:=\bb V_1^n\smallsetminus\{0\}\subset \bb V_1^n$: then the quotient $U_n/\Gm\simeq \bb P^{n-1}$ is an equivariant scheme approximation of $B\bb G_m$ in codimension $\leq n-1$. It can thus be used to approximate Chow-witt classes of codimension $\leq n-2$ on $\scr B \bb G_m$, and actually on any quotient $[X/\bb G_m]$ of a $\bb G_m$-variety $X$ satisfying \ref{subs:assumptions_on_X}. More precisely, given a $\bb G_m$-equivariant diagram $X_\bullet$ of $k$-varieties satisfying \ref{subs:assumptions_on_X}, the diagram $U_n\times_{\bb G_m} X_\bullet$ is an approximation of the corresponding diagram of stacks $[X_\bullet/\bb G_m]$: it can be thus used to perform any sensible operations on cycle groups that only involves cycles of codimension smaller than or equal to $n-2$. 
    
    \subsubsection{}\label{sub: O(-1)}
    We denote by $\cal U$ the universal line bundle on $\scr B\Gm$: its total space is $[\bb V_{-1}/\Gm]$ and $\bb V_{\bb P^N}(\cal O(-1))$ is an equivariant scheme approximations in codimension $\leq N$. In fact, the total space of the tautological line bundle on $\bb P^n$ is by definition the subscheme in $\bb P^n \times \bb A^{N+1}$ formed by the pairs $([X_0:\ldots :X_N],(Y_0,\ldots, Y_N))$ such that $(Y_0,\ldots, Y_N) = \lambda(X_0,\ldots ,X_N)$ for some scalar $\lambda$. If we look at the pullback of this line bundle along the $\Gm$-torsor $\bb A^{N+1}\smallsetminus\{0\}\ra \bb P^N$, we obtain a subscheme $L\subset \bb A^{N+1}\smallsetminus\{0\} \times \bb A^{N+1}$. There is an isomorphism $L\simeq \bb A^{N+1}\smallsetminus\{0\}\times \bb A^1$ given by sending a point $((X_0:\ldots :X_N),(Y_0,\ldots, Y_N))$ in $L$ to the point $((X_0:\ldots :X_N),\lambda)$, where $\lambda$ is the unique scalar such that $(Y_0,\ldots, Y_N) = \lambda(X_0,\ldots ,X_N)$. Observe that if we act on $\bb A^{N+1}\smallsetminus\{0\} \times \bb A^{N+1}$ with $\Gm$ by multiplication $(X_0,\ldots,X_{N})\longmapsto t(X_0,\ldots,X_N)$ on the first factor and trivially on the second factor, the subscheme $L\subset \bb A^{N+1}\smallsetminus\{0\}\times \bb A^{N+1}$ is $\Gm$-invariant, and if we use the isomorphism $L\simeq \bb A^{N+1}\smallsetminus\{0\}\times \bb A^1$ defined above, the action on $L$ is $(X_0,\ldots, X_N,\lambda)\longmapsto (tX_0,\ldots, tX_n, t^{-1}\lambda)$. In other terms, the pullback of $\bb V(\cal O_{\bb P^N}(-1))$ along $\bb A^{N+1}\smallsetminus\{0\}\ra \bb P^N$ is isomorphic to $\bb A^{N+1}\smallsetminus\{0\}\times \bb V_{-1}$, as claimed.
    
    \subsubsection{}
    The following observations on $\mu_n$-covers are explained in higher generality and more detail in \cite[Section 2]{AV}, but we recall here the main points for the convenience of the reader. 
    
    The objects of the classifying stack $\scr B \mu_n$ are by definition $\mu_n$-torsors $X\to S$. Let $\scr F_n$ denote the stack over the \'{e}tale site of schemes whose objects are triples $(S,\cal L,\varphi:\cal L^{\otimes n}\overset{\simeq}{\rightarrow}\cal O_S)$, where $S$ is a scheme, $\cal L$ is a line bundle over $S$, and $\varphi$ a $\cal O_S$-linear isomorphism; the morphisms between triples $(S',\cal L',\varphi')\to (S,\cal L,\varphi)$ are maps $S'\overset{f}{\to} S$ together with an isomorphism $\psi:f^*\cal L\simeq \cal L'$ such that $f^*\varphi=\varphi'\circ \psi$.
    
    There is a canonical isomorphism $\scr B\mu_n\simeq \scr F_n$, that is the stack of $\mu_n$-torsors is equivalent to the stack of line bundles together with a trivialization of their $n^{\rm th}$-tensor power. To see this, let us first define a morphism $\scr F_n \ra \scr B\mu_n $ by sending an object $(S,\cal L, \varphi)$ to the $\mu_n$-torsor over $S$ defined as follows: set
    \[ \cal A:= \cal O_S\oplus \cal L \oplus \cal L^{\otimes 2}\oplus \cdots \oplus \cal L^{\otimes (n-1)}. \]
    We can use the homomorphism $\varphi:\cal L^{\otimes n}\to\cal O_S$ to give a multiplicative structure to $\cal A$. Indeed, given $i,j<n$, write $i+j=mn+r$, with $r<n$, and consider the map
    \[ \varphi^{\otimes m}\otimes {\rm{id}}_{\cal L^{\otimes r}}:\cal L^{\otimes (i+j)}\simeq \cal L^{\otimes mn}\otimes \cal L^{\otimes r}\longrightarrow\cal O_S^{\otimes m}\otimes \cal L^{\otimes r}\simeq \cal L^{\otimes r}. \]
    Then for $s_i$ a section of $\cal L^{\otimes i}$ and $s_j$ a section of $\cal L^{\otimes j}$, we set 
    \[s_i\cdot s_j:=(\varphi^{\otimes mn}\otimes {\rm id}_{\cal L^{\otimes r}})(s_i\otimes s_j).\]
    Together with the usual sum, this gives $\cal A$ the structure of a sheaf of $\cal O_S$-algebras, so we can define
    \[ X_{(S,\cal L,\varphi)} :=\spec_{\cal O_S}\left(\cal A\right).\]
    There is an action of $\mu_n$ on $X_{(S,\cal L,\varphi)}$ induced by the coaction $\cal A\to\cal A[x]/(x^n-1)$ that sends a section $s_i$ of the factor $\cal L^{\otimes i}$ to $s_ix^i$.  The fact that this action is freely transitive is equivalent to $X_{(S,\cal L,\varphi)}\to S$ being \'{e}tale: this can be checked fiber by fiber, and the fiber over a point $s$ of $S$ is equal to $k(s)[t]/(t^n-f)$, here $f$ being the restriction of the image of $1$ along $\varphi^{\vee}:\cal O_S\ra\cal L^{\otimes (-n)}$. As by construction $f$ never vanishes, we deduce the \'{e}taleness of the morphism.
    
    The morphism $\scr F_n \ra \scr B\mu_n $ given by $(S,\cal L,\varphi)\longmapsto (X_{(S,\cal L,\varphi)}\ra S)$ is an isomorphism of stacks: this follows essentially from \cite[Proposition 2.2]{AV}.
    
  \subsubsection{} 
    \label{sub:line_bundles_on_BGm_and_their_torsors}
    Let us denote the quotients $[\bb V_n/\bb G_m]$ (resp.  $[\bb V_{m,n}/\bb G_m]$) by $ \scr{V}_n$ (resp. $\scr{V}_{m,n}$): these are rank-one  (resp. rank-two) vector bundles on $\scr B \bb G_m$. The substack $[\bb V_n\smallsetminus 0/\bb G_m]\subseteq [\bb V_n/\bb G_m]$ is naturally identified with the complement of the zero-section $s_0$ of $\scr{V}_n$. A map $S\ra \scr V_{n}$ can be regarded as a map $f:S\ra\scr B\bb G_m$ together with a section of $f^*\cal U^{\otimes (-n)}$, because the universal line bundle $\cal U$ coincides by definition with $\scr V_{-1}$. In other terms, we can regard the objects of the stack $\scr V_{n}$ as triples $(S,\cal L,\sigma:\cal O_S\ra\cal L^{\otimes (-n)})$, where $\cal L$ is a line bundle on $S$. Consequently, the objects of $\scr V_n\smallsetminus s_0$ are triples $(S,\cal L,\sigma:\cal O_S\overset{\simeq}{\ra} \cal L^{\otimes (-n)})$, where the morphism of line bundles is an isomorphism because the section $\sigma$ cannot vanish. We immediately deduce that there is an isomorphism $\scr V_n\smallsetminus s_0 \ra \scr F_n$ given by $(S,\cal L,\sigma)\longmapsto (S,\cal L,\sigma^{\vee})$, and therefore an isomorphism 
    \[\phi_n:[\bb V_{n}\smallsetminus 0/\Gm] \simeq \scr V_{n} \smallsetminus s_0 \overset{\simeq}{\ra} \scr B \mu_n\]
    for every $n\geq 1$.
    
    The line bundle obtained by pulling back $\cal U$ along the canonical projection $p_n:\scr V_{n}\smallsetminus s_0 \ra \scr B \bb G_m$ comes with a canonical trivialization of its $n$-th tensor power. We denote this pair of line bundle and trivialization with $(p_n^\ast \cal U, \sigma^{\vee}:p_n^\ast \cal U^{\otimes n} \overset{\simeq}{\ra} \cal O_{\scr V_n\smallsetminus s_0})$. It is clear that this pair is identified via $\phi_{n}$ with the universal $n$-torsion line bundle over $\scr B\mu_n$.
    
    In terms of equivariant scheme approximations, the quotients $\scr V_{n}$ and $\scr V_{n}\smallsetminus s_0$ can be respectively approximated by the total space of the line bundle $\bb V_{\bb P^N}(\cal O(n))$ over $\bb P^N$ and by its open subscheme $\bb V_{\bb P^N}(\cal O(n))\smallsetminus s_0$. This last fact follows from \ref{sub: O(-1)}, because taking scheme approximations of vector bundles commutes with taking tensor powers.

    \subsection{Moduli of elliptic curves}
    \subsubsection{}
    We denote by $\Mcal_{1,1}$ be the stack of elliptic curves, parametrizing $1$-marked smooth relative curves with geometrically integral fibers of genus one. Let $\Mbar_{1.1}$ be the stack of stable $1$-marked curves of genus one, which is a modular compactification of $\Mcal_{1.1}$. Recall that a stable relative $1$-marked curve $(p:C\ra S,\sigma)$ is the datum of a flat and proper morphism $p$ with a section $\sigma$ landing in the smooth locus of $p$; moreover the geometric fibers of $p$ are required to be geometrically integral, to have at worst nodal singularities, and a finite group of automorphism respecting the marking.
    
    \subsubsection{}
    In \cite[Proposition 20]{EG} the authors give presentations $\Mcal_{1,1}\simeq [U/G]$ and $\Mbar_{1,1}\simeq [W/G]$ for a certain group $G$ and certain $G$-schemes $U$ and $W$, assuming that the base field has characteristic $\neq 2,3$. In the Remark following \cite[Proposition 21]{EG}, the authors state an observation of Vistoli that the unipotent radical $H$ of $G$ acts on $U$ and $W$ freely, hence the quotient $[U/H]$ (respectively $[W/H]$) is a scheme, and there is a well defined action of $G/H\simeq\bb G_m$ on the both quotient schemes; from this we deduce
    \[ [U/G]\simeq [(U/H)/(G/H)]\simeq [(U/H)/\bb G_m],\quad [W/G]\simeq [(W/H)/(G/H)]\simeq [(W/H)/\bb G_m].  \]
    In the same Remark it is also observed that $U/H$ (respectively $W/H$) is isomorphic as a $\bb G_m$-scheme to $\bb V_{-4,-6}\smallsetminus\Delta$ (respectively $\bb V_{-4,-6}\smallsetminus\{0\}$), where $\Delta$ is the closed subscheme of equation $4a^3+27b^2=0$. Here $a$ and $b$ are coordinates on $\bb V_{-4,-6}$ of weight respectively $-4$ and $-6$. All together, these observations imply the following.
  \begin{prop}\label{prop:presentation_as_quotients}
      Suppose that the base field $k$ has characteristic $\neq 2,3$. Then we have:
      \begin{enumerate}
          \item $\Mbar_{1,1}\simeq [\bb V_{-4,-6}\smallsetminus\{0\}/\Gm]$.
          \item $\Mcal_{1,1}\simeq [\bb V_{-4,-6}\smallsetminus\Delta/\Gm]$.
      \end{enumerate}
      In other words $\Mbar_{1,1}$ is an open substack of $\scr V_{-4,-6}$, whose complement is the image of the zero section $s_0:\scr B \Gm \ra \scr V_{-4,-6}$.
  \end{prop}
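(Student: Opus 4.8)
The plan is to assemble the results of Edidin--Graber recalled above with Vistoli's observation; no essentially new argument is needed, and the proof is a bookkeeping of $\Gm$-weights.

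First I would recall the geometric input behind \cite[Proposition 20]{EG}: over a field of characteristic $\neq 2,3$, a stable (resp. smooth) $1$-marked genus one curve is, locally on the base, a plane Weierstrass cubic with its marked point at infinity, and $\Mbar_{1,1}$ (resp. $\Mcal_{1,1}$) is identified with the quotient stack $[W/G]$ (resp. $[U/G]$), where $W$ (resp. $U$) is the space of Weierstrass data giving stable (resp. smooth) curves and $G$ is the group of admissible coordinate changes. Then I would invoke the Remark following \cite[Proposition 21]{EG}: the unipotent radical $H\leq G$, which implements completing the square and the cube in the Weierstrass equation, acts freely on $W$ and on $U$; hence $W/H$ and $U/H$ are schemes carrying a residual action of $G/H\simeq\Gm$, and
\[
\Mbar_{1,1}\simeq [(W/H)/\Gm],\qquad \Mcal_{1,1}\simeq [(U/H)/\Gm].
\]

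Next I would identify these $\Gm$-schemes explicitly. After completing the square and the cube one is reduced to the short Weierstrass form $y^2=x^3+ax+b$, whose only remaining coordinate changes are $(x,y)\mapsto(\lambda^2 x,\lambda^3 y)$, which rescale the point $(a,b)$ with weights $-4$ and $-6$; this exhibits $W/H$ (and $U/H$ as an open subscheme of it) as the affine plane $\bb V_{-4,-6}$ equipped with the $\Gm$-action of \ref{sub:GmReps_and_approx}, the sign of the weights being dictated by the convention $\bb V(F)=\spec{\rm{Sym}(F^\vee)}$. A short Weierstrass cubic is singular exactly when its discriminant, which equals $-16(4a^3+27b^2)$ up to a unit, vanishes, and in characteristic $\neq 2,3$ such a cubic is \emph{nodal} --- hence a stable $1$-pointed curve --- precisely when $(a,b)\neq(0,0)$, the cuspidal curve $y^2=x^3$ being the only non-stable Weierstrass curve. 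Therefore $W/H\simeq\bb V_{-4,-6}\smallsetminus\{0\}$ and $U/H$ is its open subscheme where $4a^3+27b^2\neq 0$, i.e. $U/H\simeq\bb V_{-4,-6}\smallsetminus\Delta$ with $\Delta=\{4a^3+27b^2=0\}$. Substituting these identifications into the two displayed equivalences gives (1) and (2).

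For the final assertion: since $\scr V_{-4,-6}=[\bb V_{-4,-6}/\Gm]$ and the image of its zero section $s_0\colon\scr B\Gm\ra\scr V_{-4,-6}$ is $[\{0\}/\Gm]$, the open complement of $s_0$ is $[\bb V_{-4,-6}\smallsetminus\{0\}/\Gm]\simeq\Mbar_{1,1}$. The only genuinely delicate point in the whole argument is the consistency of conventions: one must verify that the $\Gm$-weights on $(a,b)$ read off from EG's coordinate changes are indeed $(-4,-6)$ in the notation of \ref{sub:GmReps_and_approx}, and that EG's bad locus $\Delta$ is cut out by $4a^3+27b^2$; both follow at once from the explicit formulas in \cite{EG}.
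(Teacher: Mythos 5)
Your proposal is correct and follows exactly the same route as the paper: both deduce the statement from \cite[Proposition 20]{EG} together with Vistoli's observation in the Remark following \cite[Proposition 21]{EG} that the unipotent radical $H$ acts freely, so that one may quotient in stages and identify $W/H$ and $U/H$ with $\bb V_{-4,-6}\smallsetminus\{0\}$ and $\bb V_{-4,-6}\smallsetminus\Delta$ respectively. The extra Weierstrass-form bookkeeping you supply (weights, discriminant, the cusp being the only unstable fibre) is exactly the content the paper delegates to \cite{EG}, and your attention to the sign conventions for the weights is well placed.
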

  
  \subsubsection{}
  The universal curve on $\Mbar_{1,1}$ has a relative dualizing line bundle, and its pull-back along the universal section gives a line bundle on $\Mcal_{1,1}$ called the Hodge line bundle, which in this paper we denote $\cal E$. The Hodge line bundle determines a map $\Mbar_{1,1}\ra \scr B\Gm$ which coincides with the morphism $[\bb V_{-4,-6}\smallsetminus\{0\}/\Gm]\ra \cal B\Gm$ induced by the $\Gm$-equivariant projection on the point.
  In particular, the pull-back of the universal line bundle $\cal U$ from $\cal B\Gm$ to $\Mbar_{1,1}$ coincides with the Hodge line bundle.
  
  \subsubsection{}\label{subec:localization sequence Mcal}
  Similarly, the stack $\Mcal_{1,1}$ can be seen as an open substack of $\Mbar_{1,1}$, whose complement is the the quotient stack $[\Delta\smallsetminus\{0\}/\Gm]$. There is a canonical isomorphism $[\Delta\smallsetminus\{0\}/\Gm]\simeq\scr B\mu_2$, the existence of which follows from the fact that the action of $\Gm$ on $\Delta\smallsetminus\{0\}$ is transitive with stabilizer $\mu_2$.

  Another way to prove that $[\Delta\smallsetminus\{0\}/\Gm]\simeq\scr B\mu_2$ is the following: the complement of $\Mcal_{1,1}$ in $\Mbar_{1,1}$ is the divisor of non-smooth stable curves. This stack has only one geometric point, corresponding to the unique  stable nodal elliptic curve $(C,p)$ over an algebraically closed field, hence $[\Delta\smallsetminus\{0\}/\Gm]\simeq\scr B {\rm{Aut}}(C,p)$.
  The nodal elliptic curve $(C,p)$ can be obtained from a marked $\bb P^1$ by gluing two points $p_0$ and $p_1$ together: the automorphism group of  $(C,p)$ can then be identified with the group of automorphisms of $\bb P^1$ that fix the marking $p_\infty$ and the degree $2$ divisor $p_0+p_1$: the only non-trivial automorphism having this property is the involution that switches $p_0$ and $p_1$ and fixes $p_\infty$. We deduce that ${\rm{Aut}}(C,P)\simeq\mu_2$ and $[\Delta\smallsetminus\{0\}/\Gm]\simeq\scr B\mu_2$.

  Finally, observe that $\Mcal_{1,1}$ can also be seen as an open substack of $[V_{-4,-6}/\Gm]\simeq\scr V_{-4,-6}$, whose complement is the (singular) quotient stack $\cal C:=[\Delta/\Gm]$.

  The Chow rings of $\Mbar_{1,1}$ and $\Mcal_{1,1}$ have been computed in \cite[Proposition 21]{EG}.
  \begin{prop}
  \label{prop:chow groups}
  Let $k$ be a field of characteristic $\neq 2,3$. Then
          \begin{enumerate}
          \item $\CH^\ast(\Mbar_{1,1})\simeq\ZZ[\T]/(24\T^2)$,
          \item $\CH^\ast(\Mcal_{1,1})\simeq \ZZ[\T]/(12\T)$,
      \end{enumerate}
      where $\T$ corresponds to the Euler class of the dual of the Hodge line bundle.
  \end{prop}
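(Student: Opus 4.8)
The plan is to read both presentations off the quotient descriptions of \Cref{prop:presentation_as_quotients} by running two localization sequences, exactly as in \cite[Proposition 21]{EG}; essentially the same strategy, refined to Chow--Witt groups, will reappear in \Cref{sec:Chow-Witt_ring_of_Mbar}. The only external input is the classical computation $\CH^\ast(\scr B\Gm)\simeq\ZZ[\T]$ with $\T=e(\cal U^\vee)$ (also recalled in \Cref{sec:CW Bmu2n}); the tools used throughout are homotopy invariance, the localization sequence and the projection formula, available for the quotient stacks at hand by \Cref{thm:equiv prop_verison2_stacks} applied to $\KM_\ast$ (or directly by \cite[\S2]{EG}).

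First I would treat $\Mbar_{1,1}$. By \Cref{prop:presentation_as_quotients} it is the complement of the zero section $s_0\colon\scr B\Gm\to\scr V_{-4,-6}$ of the rank-two bundle $\scr V_{-4,-6}=[\bb V_{-4,-6}/\Gm]$. Homotopy invariance identifies $\pi^\ast\colon\CH^\ast(\scr B\Gm)\xrightarrow{\sim}\CH^\ast(\scr V_{-4,-6})$, and the self-intersection formula identifies $(s_0)_\ast\circ\pi^\ast$ with multiplication by $e(N_{s_0})$. The normal bundle $N_{s_0}$ is $\scr V_{-4,-6}$ itself, i.e.\ the $\Gm$-bundle $\bb V_{-4}\oplus\bb V_{-6}$, which under the convention $\bb V(F)=\spec\mathrm{Sym}(F^\vee)$ corresponds to the line bundles with sheaves of sections $\cal U^{\otimes 4}$ and $\cal U^{\otimes 6}$; hence $e(N_{s_0})=e(\cal U^{\otimes 4})\,e(\cal U^{\otimes 6})=24\,e(\cal U)^2=24\T^2$. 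The right-exact localization sequence
\[\CH^{\ast-2}(\scr B\Gm)\xrightarrow{\ (s_0)_\ast\ }\CH^\ast(\scr V_{-4,-6})\longrightarrow\CH^\ast(\Mbar_{1,1})\longrightarrow 0,\]
the projection formula (which shows $\im(s_0)_\ast$ is the ideal generated by $(s_0)_\ast(1)=24\T^2$) and the fact that restriction to an open substack is a ring map then give $\CH^\ast(\Mbar_{1,1})\simeq\ZZ[\T]/(24\T^2)$, with $\T$ restricting to $e(\cal E^\vee)$ since $\cal E$ is pulled back from $\cal U$.

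Next I would feed this into the localization sequence of the closed embedding $\iota\colon[\Delta\smallsetminus 0/\Gm]\simeq\scr B\mu_2\hookrightarrow\Mbar_{1,1}$. The \emph{key point} is to compute $\iota_\ast(1)\in\CH^1(\Mbar_{1,1})$: the discriminant $\delta=4a^3+27b^2$ is homogeneous of weight $-12$ in the weight $(-4,-6)$ coordinates, so it is a global section of the pull-back of $\cal U^{\otimes 12}$ to $\scr V_{-4,-6}$ vanishing exactly on $[\Delta/\Gm]$; restricting to $\Mbar_{1,1}$, where the previous sequence gives $\CH^1(\scr V_{-4,-6})\xrightarrow{\sim}\CH^1(\Mbar_{1,1})$, we get $\iota_\ast(1)=12\,e(\cal U)=-12\T$. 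Using that $\CH^\ast(\scr B\mu_2)\simeq\ZZ[\T]/(2\T)$ is generated by $\iota^\ast\T$ — which in turn follows from the localization sequence for the zero section of $\scr V_2$ together with the isomorphism $\phi_2\colon\scr V_2\smallsetminus s_0\simeq\scr B\mu_2$ of \ref{sub:line_bundles_on_BGm_and_their_torsors} — the projection formula gives $\im\iota_\ast=(12\T)$, and the localization sequence
\[\CH^{\ast-1}(\scr B\mu_2)\xrightarrow{\ \iota_\ast\ }\CH^\ast(\Mbar_{1,1})\longrightarrow\CH^\ast(\Mcal_{1,1})\longrightarrow 0\]
identifies $\CH^\ast(\Mcal_{1,1})$ with $\ZZ[\T]/(24\T^2,\,12\T)=\ZZ[\T]/(12\T)$, since $24\T^2=2\T\cdot 12\T$.

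The only genuinely non-formal inputs are the two coefficients, $e(N_{s_0})=24\T^2$ and $\iota_\ast(1)=-12\T$; I expect the one place where care is required to be the bookkeeping of the $\Gm$-weights $-4,-6,-12$ through the convention $\bb V(F)=\spec\mathrm{Sym}(F^\vee)$, which is what pins down the correct powers of $\cal U$ (and hence the integers $24$ and $12$). Everything else is a formal application of homotopy invariance, localization and the projection formula.
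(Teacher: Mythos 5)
Your proposal is correct and follows exactly the route the paper has in mind: the paper itself gives no proof, deferring to \cite[Proposition 21]{EG} and remarking that the localization-sequence strategy of \Cref{sec:Chow-Witt_ring_of_Mbar} (zero section of $\scr V_{-4,-6}$ for $\Mbar_{1,1}$, then the divisor $\Delta_0\simeq\scr B\mu_2$ for $\Mcal_{1,1}$) yields the statement, which is precisely what you carry out, with the correct Euler classes $24\T^2$ and $-12\T$. No gaps.
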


  The strategy we will be using in \Cref{sec:Chow-Witt_ring_of_Mbar} for computing Chow-Witt rings of these stacks can be used to give a proof of \ref{prop:chow groups}.
  
  \subsubsection{}
  We have shown that the stacks $\scr B\bb G_m$ and $\scr B \mu_{n}$ have a presentation as $\bb G_m$-quotients of smooth $\bb G_m$-schemes. The same is true for $\Mcal_{1,1}$ and $\Mbar_{1,1}$ when the characteristic of the base field is $\neq 2,3$.
  
  In particular, the assumptions of \ref{subs:assumptions_on_X} applies to these stacks, for instance because the group $\bb G_m$ is special (that is, every $\bb G_m$-torsor can be trivialized Zariski-locally). This means that there is a well defined theory of Chow-Witt rings of these stacks, as defined in \ref{sub:chow-witt stack}.
  
\section{The Chow-Witt rings of the classifying stacks of $\bb G_m$ and $\mu_{2n}$}
\label{sec:CW Bmu2n}

In this Section, we first recall a presentation of the Chow-Witt ring of $\scr B\Gm$ (\Cref{prop:CW ring BGm}). We leverage this result to compute the Chow-Witt ring of $\scr B\mu_{2n}$: we begin by determining the additive structure (\Cref{prop:CW groups Bmu2n}) and afterward we focus on the multiplicative structure (\Cref{thm:CW ring of Bmu2n}). The knowledge of the multiplicative structure of $\scr B\mu_{2n}$ will be the key for determining the multiplicative structure of $\Mbar_{1,1}$ and $\Mcal_{1,1}$ in the next Section.

\subsection*{Conventions}
For sake of readability we have decided to hide from our notation most of the references to the "grading" of the twisting graded line bundles. So Chow-Witt groups appear as twisted simply by a line bundle, rather than a graded line bundle. We have however maintained the "grading" explicit in the definition of the generators of the various Chow-Witt groups appearing throughout. In general situations these "grading" would be relevant when comparing products $xy$ with $yx$. In our specific situation the grading turns out to be irrelevant, making our notational choice harmless. We invite in any case the reader to check this harmlessness autonomously.  

Regarding the twisting line bundles we have decided to make a choice of a line bundle for every class in the Picard group modulo $2$ of schemes and stacks appearing in the next sections. This can be done thanks to the fact that homology with coefficients in $\KMW_*$ and $\I^*$ is insensitive to the replacement of the twist $(\cal L,a)$ with a twist $(\cal L,a)\otimes (\cal M,b)^{\otimes 2}$. We have thus expressed all computations in terms of these choices.

Although it would have been more appropriate, we have refrained in the next two sections from using the notation introduced by Feld, following Rost, for Chow-Witt groups with coefficients. On one hand we believe that the notation introduced by Fasel in \cite{FasLectures} is a bit more intuitive; on the other hand keeping track of virtual vector bundles in the twisting coordinate would have added a whole layer of notational complications.



  \subsection{The Chow-Witt ring of $\scr B\Gm$}
    \subsubsection{}
    In what follows, we set 
    \[\T:=e(\cal U^\vee)\in \CHW^1(\scr B\Gm,\cal U,1)\] 
    the Euler class of the dual of the universal line bundle $\cal U^\vee$ over $\scr B\Gm$. 
    
    To start, we recall the structure of the $\I^*$-cohomology of $\scr B\Gm$. The description of the groups $H^{i,j}(\scr B\Gm,\I^\ast,\bullet)$ follows directly from the computation of $H^{i,j}(\bb P^n,\I^\ast,\bullet)$ contained in \cite[Section 11]{MR3061003}, because $\bb P^n$ is an equivariant scheme approximation of $\scr B\Gm$. The description of the $\W(k)$-algebra $H^\ast(\scr B\Gm,\I^\ast,\bullet)$ can be found in \cite[Proposition 4.3]{WendtGrass}.
            \begin{prop}\label{prop:Ij cohomology BGm}
             Let $\cal U$ be the universal line bundle over $\scr B\Gm$. We have:
             \begin{align*} 
             &
             H^{i,j}(\scr B\Gm,\I^\ast,\cal O)\simeq \left\{
             \begin{tabular}{cl}
                 $\I^j(k)$ & if $i=0$ \\
                 $\Km_{j-i}(k)$ & if $i\geq 2$ even \\
                 $0$ & if $i$ odd
             \end{tabular}
             \right.
             &
             H^{i,j}(\scr B\Gm,\I^\ast,\cal U)\simeq \left\{
             \begin{tabular}{cl}
                 $0$ & if $i$ even \\
                 $\Km_{j-i}(k)$ & if $i$ odd
             \end{tabular}
             \right.
             \end{align*}
           Moreover we have an isomorphism of $\W(k)$-algebras
            \[ H^{\ast,\ast}(\scr B\Gm,\I^*,\bullet) \simeq \W(k) [\T]/\I\cdot\T\]
            induced by mapping $\T\mapsto e(\cal U^\vee) \in H^{1,1}(\scr B\Gm,\I^*,\cal U,1)$, the $\I^*$-theoretic Euler class of $\cal U^\vee$.
          \end{prop}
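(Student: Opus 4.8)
The plan is to reduce the statement to the analogous computation for finite-dimensional projective spaces, which are equivariant scheme approximations of $\scr B\Gm$. Recall from \ref{sub:GmReps_and_approx} that $\bb P^n\simeq U_{n+1}/\Gm$ is an equivariant scheme approximation of $\scr B\Gm$, and from \ref{sub: O(-1)} that the pull-back of the universal line bundle $\cal U$ to this approximation is $\cal O_{\bb P^n}(-1)$. Since $\I^\ast$-cohomology is unaffected by twisting by the square of a line bundle, for twisting purposes we may replace $\cal O_{\bb P^n}(-1)$ by $\cal O_{\bb P^n}(1)$, and $\cal U^{\otimes 2}$ by $\cal O$. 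By \Cref{def:equivariant groups 2} together with \Cref{prop:indep_of_choices_in_def_of_equiv_hom}, for a fixed bidegree $(i,j)$ and every $n$ large compared to $i$ one then has canonical isomorphisms
\[
  H^{i,j}(\scr B\Gm,\I^\ast,\cal O)\simeq H^{i,j}(\bb P^n,\I^\ast,\cal O_{\bb P^n}),\qquad H^{i,j}(\scr B\Gm,\I^\ast,\cal U)\simeq H^{i,j}(\bb P^n,\I^\ast,\cal O_{\bb P^n}(1)),
\]
compatibly with products as $n$ varies.

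For the additive part I would then import the computation of $H^{i,j}(\bb P^n,\I^\ast,\bullet)$ from \cite[Section 11]{MR3061003}. In the stable range (i.e.\ for $i$ small relative to $n$) those groups are exactly the ones displayed in the statement: $\I^j(k)$ in cohomological degree $0$ with trivial twist, $\Km_{j-i}(k)$ in even positive cohomological degree for the trivial twist and in odd cohomological degree for the $\cal O(1)$-twist, and zero in the remaining cases. Because these groups are independent of $n$ in the stable range, taking the colimit over $n$ — equivalently, invoking the displayed isomorphisms for $n\gg i$ — yields the stated description of $H^{i,j}(\scr B\Gm,\I^\ast,\cal O)$ and $H^{i,j}(\scr B\Gm,\I^\ast,\cal U)$.

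For the multiplicative part, the class $\T=e(\cal U^\vee)$ restricts on each approximation to the $\I^\ast$-theoretic Euler class of $\cal O_{\bb P^n}(1)$, which is the polynomial generator in Fasel's presentation; the relations among the powers $\T^m$ and the coefficient ring $\W(k)$ are precisely those recorded in \cite[Proposition 4.3]{WendtGrass}. Thus the $\W(k)$-algebra map $\W(k)[\T]/(\I\cdot\T)\to H^{\ast,\ast}(\scr B\Gm,\I^\ast,\bullet)$ sending $\T$ to $e(\cal U^\vee)$ is well defined, and comparing it with the additive computation bidegree by bidegree shows it is an isomorphism. Equivalently one may simply quote \cite[Proposition 4.3]{WendtGrass} and observe that the approximation argument above identifies the ring computed there with $H^{\ast,\ast}(\scr B\Gm,\I^\ast,\bullet)$.

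The only genuinely delicate point is the bookkeeping of the twisting data: one has to keep track of the identification $\cal U|_{\bb P^n}\simeq\cal O_{\bb P^n}(-1)$, of the reduction modulo squares in $\pic$ that permits replacing $\cal O_{\bb P^n}(-1)$ by $\cal O_{\bb P^n}(1)$, and of the ``grading'' entry of the graded line bundles, which — as explained in the conventions opening this section — does not affect the ring structure in the present case. The homological algebra itself is formal, the independence of the chosen approximation being \Cref{prop:indep_of_choices_in_def_of_equiv_hom} and the stabilization being an instance of homotopy invariance (\Cref{lemma:homotopy_invariance}) for the vector bundles interpolating between successive approximations.
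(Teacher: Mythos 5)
Your proposal is correct and follows essentially the same route as the paper, which likewise obtains the additive part by identifying $H^{i,j}(\scr B\Gm,\I^\ast,\bullet)$ with the stable-range groups $H^{i,j}(\bb P^n,\I^\ast,\bullet)$ computed in \cite[Section 11]{MR3061003} via the equivariant scheme approximation $\bb P^n$, and imports the $\W(k)$-algebra structure from \cite[Proposition 4.3]{WendtGrass}. Your additional remarks on the twist bookkeeping and on stabilization via \Cref{prop:indep_of_choices_in_def_of_equiv_hom} and \Cref{lemma:homotopy_invariance} only make explicit what the paper leaves implicit.
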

    Actually, all the cohomology groups of $\scr B\Gm$ with coefficients in $\KMW_\ast$ have been fully described in \cite[Theorem 11.7]{MR3061003}.
    \begin{prop}
    \label{prop:KMW cohomology BGm}
     We have
        \[
        H^{i,j}(\scr B\Gm,\KMW_\ast,\cal O)\simeq \left\{ 
        \begin{tabular}{cl}
        $\KMW_j(k)$  & if $i=0$  \\
        $\KM_{j-i}(k)$ & if $i\geq 2$ is even \\
        $2\KM_{j-i}(k)$ & if $i$ is odd
        \end{tabular}
        \right., \quad
        H^{i,j}(\scr B\Gm,\KMW_\ast,\cal U)\simeq \left\{ 
        \begin{tabular}{cl}
        $2\KM_{j-i}(k)$ & for $i$ even \\
        $\KM_{j-i}(k)$ & for $i$ odd.
        \end{tabular}
        \right.
        \]
    \end{prop}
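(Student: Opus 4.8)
The fastest route uses the approximation of $\scr B\Gm$ by projective spaces. By \ref{sub:GmReps_and_approx} and \ref{sub: O(-1)}, $\bb P^n$ is an equivariant scheme approximation of $\scr B\Gm$ with $\cal U$ corresponding to $\cal O_{\bb P^n}(-1)$, so by definition (\ref{sub:chow-witt stack}) one has $H^{i,j}(\scr B\Gm,\KMW_\ast,\cal O)\simeq H^{i,j}(\bb P^n,\KMW_\ast,\cal O)$ and $H^{i,j}(\scr B\Gm,\KMW_\ast,\cal U)\simeq H^{i,j}(\bb P^n,\KMW_\ast,\cal O(-1))$ as soon as $n$ is large relative to $i$; the right-hand sides are computed in \cite[Theorem 11.7]{MR3061003}, and in the stable range the answer no longer depends on $n$, which yields the stated tables. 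I outline below how one would instead recover this from the machinery assembled above, since it is the template for the computations of the next sections.

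The input is the homotopy cartesian square of Rost--Schmid complexes attached to \eqref{eq:diag morel} (see \ref{sub:shopping_list_of_cycle_modules}). Since the comparison maps $p,q$ are degreewise surjective it produces a short exact sequence of complexes on $\scr B\Gm$, hence a Mayer--Vietoris long exact sequence relating the cohomologies of $\KMW_\ast$, of $\I^\ast\oplus\KM_\ast$, and of $\Km_\ast$. I would then feed in the three ``known'' pieces: the $\I^\ast$-cohomology is \Cref{prop:Ij cohomology BGm}, while Rost's projective bundle formula (\cite{rost}) applied to $\bb P^n$ gives $H^{i,j}(\scr B\Gm,\KM_\ast)\simeq\KM_{j-i}(k)$ and $H^{i,j}(\scr B\Gm,\Km_\ast)\simeq\Km_{j-i}(k)$ for every $i\ge 0$ (and $0$ for $i<0$), with the map $p_\ast$ identified with reduction mod $2$, in particular surjective with kernel $2\KM_{j-i}(k)$. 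Hence the middle map of the long exact sequence is surjective in every bidegree, all the connecting maps vanish, and the sequence breaks into short exact sequences
\[0\ra H^{i,j}(\scr B\Gm,\KMW_\ast,\cal L)\ra H^{i,j}(\scr B\Gm,\I^\ast,\cal L)\oplus \KM_{j-i}(k)\xrightarrow{q_\ast-p_\ast}\Km_{j-i}(k)\ra 0,\]
exhibiting $H^{i,j}(\scr B\Gm,\KMW_\ast,\cal L)$ as the fibre product of $H^{i,j}(\scr B\Gm,\I^\ast,\cal L)\xrightarrow{q_\ast}\Km_{j-i}(k)$ and $\KM_{j-i}(k)\to\Km_{j-i}(k)$ (reduction mod $2$).

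It then remains to evaluate this fibre product case by case. For $\cal L=\cal O$ and $i=0$ it is Morel's cartesian square for the field $k$ (the case $X=\spec k$ of \Cref{prop:fundamental diagram}, i.e. \cite[Theorem 5.3]{MR2099118}), and equals $\KMW_j(k)$. When the $\I^\ast$-group vanishes --- $\cal L=\cal O$ with $i$ odd, or $\cal L=\cal U$ with $i$ even --- the fibre product is the kernel of reduction mod $2$, that is $2\KM_{j-i}(k)$. In the remaining cases --- $\cal L=\cal O$ with $i\ge 2$ even, or $\cal L=\cal U$ with $i$ odd --- one uses that $q_\ast\colon H^{i,j}(\scr B\Gm,\I^\ast,\cal L)\ra\Km_{j-i}(k)$ is an isomorphism, so that the fibre product projects isomorphically onto $\KM_{j-i}(k)$. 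Collecting these identifications reproduces the stated formulas.

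The only non-formal ingredient is the claim that $q_\ast$ is an isomorphism in those degrees --- equivalently, that $\T=e(\cal U^\vee)$ and its powers generate the $\I^\ast$-cohomology of $\scr B\Gm$ compatibly with the mod $2$ top Chern classes. This is exactly what is established along the way in \cite[Section 11]{MR3061003}: there the groups $H^i(\bb P^n,\cal I^\bullet)$ are computed by playing the sheaf sequence $0\ra\cal I^{j+1}\ra\cal I^j\xrightarrow{q}\cal K^{\rm M}_j/2\ra 0$ against the mod $2$ Milnor K-cohomology of $\bb P^n$, and the identifications recorded in \Cref{prop:Ij cohomology BGm} are precisely those induced by $q$; this transports to $\scr B\Gm$ through the approximation. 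I expect this identification of $q_\ast$ --- or, if one instead runs the long exact sequence of $0\ra\cal I^{j+1}\ra\cal K^{\rm MW}_j\ra\cal K^{\rm M}_j\ra 0$, the computation of the resulting integral Bockstein on $\bb P^n$ --- to be the main (and essentially the only genuine) obstacle; the rest is a diagram chase.
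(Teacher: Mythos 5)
Your first route is exactly the paper's proof: the paper simply notes that $\bb P^n$ is an equivariant scheme approximation of $\scr B\Gm$ in the relevant range, with $\cal U$ inducing $\cal O_{\bb P^n}(-1)$, and quotes \cite[Theorem 11.7]{MR3061003} for the cohomology of projective space. Your supplementary fibre-product derivation from Morel's cartesian square is a sound independent check (modulo the identification of $q_\ast$ that you correctly flag and attribute to \cite[Section 11]{MR3061003}), and it is in fact the template the paper itself deploys later, e.g.\ in \Cref{prop:fundamental diagram} and in the computation of $\CHW^1(\Mbar_{1,1},\cal O)$, but it is not part of the paper's argument for this particular statement.
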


    \subsubsection{}
    \label{sub:H}
    We denote by
    \[\HH \in \CHW^{0}(\scr B\Gm,\cal U,0)\simeq \Pb^0(\scr B\Gm,\cal U,0) \subseteq H^{0,0}(\scr B\Gm, \I^\ast, \cal U,0)\times \CH^0(\scr B\Gm) .\]
    that corresponds to the pair $(0,2)$. In other terms $\HH$ is a sort of hyperbolic form but with twisted coefficients in $\cal U$. This can be represented by the symbol $h\otimes s \in \KMW_0(k(x_1,\dots ,x_n),\cal U)$ where $s$ is any generic generator of $\cal U$, and $h$ represents the Grothendieck-Witt class of the hyperbolic plane.
    
    The classes $\T$ and $\HH$ are particularly important for our purposes. Our work is fully formulated in terms of $\Gm$-equivariant geometry, and thus the Chow-Witt rings we are interested in are best understood as $\CHW^\ast(\scr B\Gm,\bullet)$-algebras. In particular they all contain the naturally defined pull-backs of the classes $\T$ and $\HH$.
    \begin{prop}\label{prop:CW ring BGm}
        We have an isomorphism of $\GW(k)$-algebras
        \[ \CHW^*(\scr B\Gm,\bullet)\simeq \GW(k)[\T,\HH]/(I\cdot\T,I\cdot\HH,\HH^2-2h)  \]
        given by mapping $\T$ to $e(\cal U^\vee)$ and $\HH$ to the class introduced in \ref{sub:H}.
    \end{prop}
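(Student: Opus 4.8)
The plan is to deduce the presentation from the fundamental exact sequence of \Cref{prop:fundamental diagram}, fed with the $\I^\ast$-cohomology ring of $\scr B\Gm$ recorded in \Cref{prop:Ij cohomology BGm} and the classical computation $\CH^\ast(\scr B\Gm)\simeq\ZZ[e_{\CH}(\cal U^\vee)]$ (a polynomial ring on the Chow-theoretic Euler class of $\cal U^\vee$, obtained from $\CH^\ast(\bb P^N)$ and letting $N\to\infty$). The first observation is that $\CH^\ast(\scr B\Gm)$ is torsion free, so the term $K^i$ of \Cref{prop:fundamental diagram} vanishes in every codimension and every twist, and hence $(p',q')$ identifies $\CHW^i(\scr B\Gm,\cal L)$ with the fibre product $H^{i,i}(\scr B\Gm,\I^\ast,\cal L)\times_{\CH^i(\scr B\Gm)\otimes\ZZ/2}\CH^i(\scr B\Gm)$; since $\scr B\Gm$ is smooth, $(p',q')$ respects the intersection product, so letting the codimension and the twist (which takes the two values $\cal O,\cal U$ in $\pic(\scr B\Gm)/2$) vary, $\CHW^\ast(\scr B\Gm,\bullet)$ becomes the bigraded fibre product ring
\[H^{\ast,\ast}(\scr B\Gm,\I^\ast,\bullet)\times_{\CH^\ast(\scr B\Gm)\otimes\ZZ/2}\CH^\ast(\scr B\Gm).\]

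Next I would compute this fibre product explicitly. Using $H^{\ast,\ast}(\scr B\Gm,\I^\ast,\bullet)\simeq\W(k)[\T]/(I\cdot\T)$ with $\T=e^{\I}(\cal U^\vee)$ in bidegree $(1,\cal U)$, the isomorphism $\CH^\ast(\scr B\Gm)\otimes\ZZ/2\simeq\ZZ/2[\bar e]$, and the fact that both structure maps send the relevant Euler class to $\bar e$, one reads off that a compatible pair is precisely: one element of $\W(k)$ together with one element of $\ZZ$ having equal reduction mod $2$ (this contributes exactly $\GW(k)$ in bidegree $(0,\cal O)$), plus one free integer in every other bidegree. Hence, as a bigraded group, $\CHW^\ast(\scr B\Gm,\bullet)$ is $\GW(k)$ in bidegree $(0,\cal O)$ and a free rank-one group in every other bidegree — something one may equally read off directly from \Cref{prop:KMW cohomology BGm}. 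Concretely, $(p',q')(\T^i)=(e^{\I}(\cal U^\vee)^i,e_{\CH}(\cal U^\vee)^i)$ generates the bidegree $(i,\cal U^{\otimes i})$ summand, and since the $\I^\ast$-cohomology vanishes in the other codimension-$i$ bidegree, $(p',q')(\HH\T^i)=(0,\,2\,e_{\CH}(\cal U^\vee)^i)$ generates the bidegree $(i,\cal U^{\otimes i+1})$ summand, where $\HH$ corresponds to the pair $(0,2)$ as in \ref{sub:H}.

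Finally I would check the three relations and conclude by a bidegreewise comparison. Under $(p',q')$ one has $\HH=(0,2)$ and $\T=(e^{\I}(\cal U^\vee),e_{\CH}(\cal U^\vee))$; for $\alpha\in I\subseteq\GW(k)$, which has vanishing rank and Witt class lying in $I\subseteq\W(k)$, this gives $\alpha\T=(\alpha\,e^{\I}(\cal U^\vee),0)=0$ using $I\cdot\T=0$ in $\I^\ast$-cohomology, and $\alpha\HH=0$, while $\HH^2=(0,4)$ is, under the identification $\CHW^0(\scr B\Gm,\cal O)\simeq\GW(k)$, the unique class of rank $4$ with trivial Witt class, namely $2h$. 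Thus there is a surjective morphism of $\GW(k)$-algebras $\GW(k)[\T,\HH]/(I\cdot\T,I\cdot\HH,\HH^2-2h)\to\CHW^\ast(\scr B\Gm,\bullet)$, surjective because $\T$ and $\HH$ generate the target in every bidegree. In the source, the relation $\HH^2=2h\in\GW(k)$ reduces every monomial to $\T^i$ or $\HH\T^i$, and $I\cdot\T=I\cdot\HH=0$ force $\GW(k)\cdot\T^i\simeq\GW(k)\cdot\HH\T^i\simeq\GW(k)/I\simeq\ZZ$ for $i\geq 1$; hence the source carries exactly the bigraded structure found in the previous paragraph, and the surjection is an isomorphism in each bidegree, hence an isomorphism. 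The only delicate point is bookkeeping the twists in the fibre product so that the rank-one pattern matches up on both sides; everything else is formal.
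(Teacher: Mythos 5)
Your argument is correct, but it takes a different route from the paper: the paper disposes of this Proposition in one line by citing Theorem 1.1 of \cite{WendtGrass} in the case $n=1$, whereas you re-derive the presentation from ingredients already available in the text, namely the fundamental exact sequence of \Cref{prop:fundamental diagram} (whose kernel term $K$ vanishes because $\CH^\ast(\scr B\Gm)$ is torsion free, $K$ being a quotient of the $2$-torsion of the Chow group), the $\I^\ast$-cohomology of \Cref{prop:Ij cohomology BGm}, and the multiplicativity of $(p',q')$ on smooth stacks. This is in substance the proof strategy of the cited reference, so what your write-up buys is self-containedness: a reader can verify the presentation without opening \cite{WendtGrass}, and the same fibre-product bookkeeping is reused later in the paper (e.g.\ in the proofs of \Cref{thm:additive structure Mbar}). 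Two small points worth making explicit if this were to replace the citation: (i) the relation $I\cdot\T=0$ is only consistent with $\CHW^1(\scr B\Gm,\cal U)\simeq\ZZ$ if $I\subseteq\GW(k)$ is read as the kernel of the rank homomorphism (so that $\GW(k)/I\simeq\ZZ$), which is the convention you implicitly and correctly use when you say elements of $I$ have "vanishing rank"; and (ii) in the final bidegreewise comparison one should note that $\T^i\HH^{2j}=(2h)^j\T^i=4^j\T^i$ in the abstract ring (since $(2h)^j-4^j\in I$ kills against $\T$), so each graded piece of the source really is cyclic over $\ZZ$ or equal to $\GW(k)$, after which the surjection onto the free pieces of the target forces the isomorphism. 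Neither point is a gap, just bookkeeping you gesture at without spelling out.
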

    \begin{proof}
    This is an easy consequence of \cite[Theorem 1.1]{WendtGrass} in the case $n=1$.
    \end{proof}

    \subsubsection{}
      \label{subsub:Eul_class_of_Un}
     In Chow-Witt theory, as much as in $\I^\ast$- and $\W$-theory, Euler classes of the tensor product of line bundles $\cal L \otimes \cal M$ in general is not equal the sum of the Euler classes of $\cal L$ and $\cal M$. 
     
     For instance, the usual formula $e(\cal U^{\otimes -2})=2\T$ cannot hold in this setting, because the element on the left belongs to $\CHW^1(\scr B\Gm,\cal U^{\otimes 2})$ whereas the element on the right is in $\CHW^1(\scr B\Gm,\cal U)$. However on $\scr B\bb G_m$ it is easy to write an explicit formula in terms of the multiplicative structure described in \Cref{prop:CW ring BGm}:
    \[ e(\cal U^{\otimes(-2n)})=n\T\HH,\quad e(\cal U^{\otimes (-2n-1)})=(2n+1)\T. \]

  Observe also that from the relations in \Cref{prop:CW ring BGm} we have $\T^i\HH^{2j}=2^jh^j\T^i$. Actually, for $i>0$ the right hand side of this equality can be further simplified into $4^j\T^i$: indeed, the element $2^jh^j-4^j$ belongs to the fundamental ideal $I$, hence $(2^jh^j-4^j)\cdot \T=0$, that is $2^jh^j\cdot\T=4^j\T$, from which our claim follows. We will frequently take advantage of these formulas in our computations.

  \subsection{The Chow-Witt groups of $\scr B\mu_{2n}$}
  For the remainder of the section, we assume that the base field $k$ has characteristic coprime with $2n$.

    As we have seen in \ref{sub:line_bundles_on_BGm_and_their_torsors} the stack $\scr B \mu_{2n}$ is naturally identified with the complement of the zero section of the line bundle $p:\scr V_{-2n} \ra \scr B\bb G_m$. Observe that the line bundle $\scr V_{-2n}$ is the total space of the invertible sheaf $\cal U^{\otimes 2n}$, hence the pull-back of $\cal U^{\otimes 2n}$ to $\scr V_{-2n}$ comes equipped with a canonical section $\sigma$. The open substack $\scr V_{-2n}\smallsetminus s_0$ can be regarded both as the complement of the zero section $s_0:\scr B\Gm\ra \scr V_{-2n}$ and as the non-vanishing locus of $\sigma$. In other terms, the section $\sigma$ trivializes the pull-back of $\cal U^{\otimes 2n}$ over $\scr V_{-2n}\smallsetminus\{0\}\simeq\scr B\mu_{2n}$
    
    The localization sequence associated with the decomposition 
    \begin{equation}
      \label{eqn:loc_seq_Bmu_2n}
      s_0(\scr B\Gm) \subset \scr V_{-2n} \supset \scr V_{-2n}\smallsetminus s_0(\scr B\Gm)
    \end{equation}
    gives a boundary morphism
    \[\partial:\CHW^0(\scr B\mu_{2n}, \cal O) \ra H^0(\scr B\Gm,\KMW_{-1},\cal U^{\otimes 2n})\simeq H^0(\scr B\Gm,\W,\cal U^{\otimes 2n}).\]
  Cohomology groups are well defined because all the stacks involved are smooth.
    We can apply the construction of \Cref{lemma:splitting CW0}, which produces an element $q_{gen}$ in $\CHW^0(\scr B\mu_{2n})$. The latter corresponds to the restriction over the generic point of the quadratic form
    \[q:\cal U^{\otimes n} \otimes \cal U^{\otimes n} \ra \cal O \]
    induced by the trivialization of $\cal U^{\otimes 2n}$ defined by $\sigma$.
    \begin{defin}\label{defin:class u}
        We define $\U$ as the element $q_{gen}-1$ in $\CHW^0(\scr B\mu_{2n},\cal O,0)$ provided by \Cref{lemma:splitting CW0}.
  \end{defin}
  \begin{rmk}\label{rmk:class U}
  The element $\U$ is only visible at the level of Chow-Witt groups and is not captured by the usual Chow theory: by this we mean that the image of $\U$ along the map
  \[ \CHW^0(\scr B\mu_{2n},\cal O) \ra \CH^0(\scr B\mu_{2n}) \]
  is zero.
  \end{rmk}

The Picard group of $\scr B\mu_{2n}$ is isomorphic to $\ZZ/2n\ZZ$, and a generator for this group is given by the pull-back of $\cal U$, which we will keep indicating with the same symbol. Henceforth, up to isomorphism and up to squares, the Chow-Witt groups of $\scr B\mu_{2n}$ are twisted either by $\cal O$ or by $\cal U$. The same argument used in \cite{MNW} for $\scr B\mu_2$ can be extended to the following Proposition.
        \begin{prop}
        \label{prop:CW groups Bmu2n}
      Suppose that the base field $k$ has characteristic co-prime with $2n$. Let $\T$ be the Euler class of $\cal U^\vee$, let $\HH$ be the class $(0,2)\in \CHW^0(\scr B\mu_{2n},\cal U)$ introduced in \ref{sub:H}, and let $\U$ be the class defined in \Cref{defin:class u}. Then the following description of $\CHW^*(\scr B\mu_{2n},\bullet)$ holds:
      \[\begin{array}{c@{\hskip 0.16in}c@{\hskip 0.16in}c@{\hskip 0.16in}c@{\hskip 0.16in}c@{\hskip 0.16in}c@{\hskip 0.16in}c}
      \toprule
      Twist  & 0 & 1 & 2 & 3 & 2k & 2k+1\\
      \midrule
       \cal O  & \GW(k)\cdot 1\oplus \W(k)\cdot \U& \bb Z/n\cdot \HH\T & \bb Z/4n\cdot \T^2& \bb Z/n\cdot \HH\T^3 &\bb Z/4n\cdot \T^{2k} & \bb Z/n\cdot \HH\T^{2k+1}\\
       \cal U & \bb Z\cdot\HH & \bb Z/4n\cdot\T & \bb Z/n\cdot\HH\T^2 & \bb Z/4n\cdot \T^3 & \bb Z/n\cdot\HH\T^{2k}& \bb Z/4n\cdot\T^{2k+1}.\\
      \bottomrule
      \end{array}\]
    \end{prop}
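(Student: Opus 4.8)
The plan is to run the localization sequence of \ref{sub:can_loc_sequence_on_a_bundle} for the presentation $\scr B\mu_{2n}\simeq\scr V_{-2n}\smallsetminus s_0(\scr B\Gm)$ from \ref{sub:line_bundles_on_BGm_and_their_torsors}, in which $\scr V_{-2n}=\bb V(\cal U^{\otimes 2n})$ is the total space of the line bundle $\cal U^{\otimes 2n}$ over $\scr B\Gm$ and $s_0$ is its zero section. Because $\cal U^{\otimes 2n}=(\cal U^{\otimes n})^{\otimes 2}$ is a square of a line bundle, the twist $D(\cal U^{\otimes 2n})\otimes(\cal L,a)$ is canonically identified with $(\cal L,a)$ for the purposes of $\KMW_\ast$-cohomology, while the Euler class that appears is $e(\cal U^{\otimes 2n})=\pm\,n\,\T\HH$ by \ref{subsub:Eul_class_of_Un}. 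Thus, for every graded line bundle $(\cal L,a)$, the sequence reads in the Chow range
\[\CHW^{i-1}(\scr B\Gm,\cal L,a)\xrightarrow{\,\cdot n\T\HH\,}\CHW^{i}(\scr B\Gm,\cal L,a)\to\CHW^{i}(\scr B\mu_{2n},\cal L,a)\xrightarrow{\ \partial\ }H^{i,i-1}(\scr B\Gm,\KMW_\ast,\cal L,a)\xrightarrow{\,\cdot n\T\HH\,}H^{i+1,i}(\scr B\Gm,\KMW_\ast,\cal L,a),\]
so $\CHW^{i}(\scr B\mu_{2n},\cal L,a)$ is an extension of $\ker(\cdot n\T\HH)$ by $\coker(\cdot n\T\HH)$. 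Since $\pic(\scr B\mu_{2n})\cong\ZZ/2n$ is generated by $\cal U$, only the twists $\cal L\in\{\cal O,\cal U\}$ have to be treated.

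The next step is to evaluate the two end terms. For the kernel, \Cref{prop:KMW cohomology BGm} shows that $H^{i,i-1}(\scr B\Gm,\KMW_\ast,\cal L)$ is assembled from groups $\KM_{-1}(k)=0$ in all cases \emph{except} $(i,\cal L)=(0,\cal O)$, where it is $\KMW_{-1}(k)=\W(k)$; hence for $(i,\cal L)\neq(0,\cal O)$ the sequence collapses to an isomorphism $\CHW^{i}(\scr B\mu_{2n},\cal L)\simeq\coker(\cdot n\T\HH)$, which moreover vanishes for $i<0$. For the cokernel, \Cref{prop:CW ring BGm} gives that for $i\geq 1$ every group $\CHW^{i}(\scr B\Gm,\cal L)$ is infinite cyclic on the unique monomial $\T^{a}\HH^{b}$ of the correct bidegree, while $\CHW^{0}(\scr B\Gm,\cal O)=\GW(k)$ and $\CHW^{0}(\scr B\Gm,\cal U)=\ZZ\cdot\HH$. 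Multiplying the generator of $\CHW^{i-1}(\scr B\Gm,\cal L)$ by $n\T\HH$ and simplifying by means of $\HH^{2}=2h$ and $(2h-4)\T^{j}=0$ for $j>0$ (see \ref{subsub:Eul_class_of_Un}) produces $n$ times the generator of $\CHW^{i}$ when the source monomial carries no $\HH$-factor, and $4n$ times it when the source monomial carries one $\HH$-factor (so that an $\HH^{2}$ is created). Carrying this out for all $i$ and for $\cal L\in\{\cal O,\cal U\}$ reproduces exactly the $\ZZ/n$ and $\ZZ/4n$ entries of the table, with the generators $\HH\T^{2k+1},\T^{2k}$ (twist $\cal O$) and $\HH\T^{2k},\T^{2k+1}$ (twist $\cal U$).

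Finally, for the exceptional bidegree $(0,\cal O)$ one has $\CHW^{-1}(\scr B\Gm,\cal O)=0$ and $H^{1,0}(\scr B\Gm,\KMW_\ast,\cal O)=2\KM_{-1}(k)=0$, so the localization sequence becomes a short exact sequence $0\to\GW(k)\to\CHW^{0}(\scr B\mu_{2n},\cal O)\to\W(k)\to 0$. To split it I would bring in the class $\U$ of \Cref{defin:class u}: the normal bundle of $s_0(\scr B\Gm)$ in $\scr V_{-2n}$ is $\cal U^{\otimes 2n}$, a square, so the boundary $\partial$ here coincides with the one of \Cref{lemma:splitting CW0}, and by part (2) of that Lemma and the Remark following it, $\partial(\U)$ is a generator of $H^{0}(\scr B\Gm,\KMW_{-1},\cal U^{\otimes 2n})\simeq\W(k)$. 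Hence $\W(k)\cdot\U$ is a free $\W(k)$-submodule carried isomorphically onto the quotient by $\partial$, the sequence splits, and $\CHW^{0}(\scr B\mu_{2n},\cal O)=\GW(k)\cdot 1\oplus\W(k)\cdot\U$, as claimed. The only genuinely delicate points are this extension at $(0,\cal O)$ — settled by the explicit class $\U$ — and the twist bookkeeping ensuring that the square $\cal U^{\otimes 2n}$ and the precise form of the Euler class drop out cleanly; the rest is a routine assembly of cokernels of multiplication by $n\T\HH$ on the already-known ring $\CHW^{\ast}(\scr B\Gm,\bullet)$.
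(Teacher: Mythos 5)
Your proposal is correct and follows essentially the same route as the paper: the localization sequence for the zero-section complement of $\scr V_{-2n}\ra\scr B\Gm$, the vanishing of the boundary targets because they are built from $\KM_{-1}(k)=0$ except in bidegree $(0,\cal O)$, the cokernel computations of multiplication by $e(\cal U^{\otimes 2n})=\pm n\T\HH$ using $\HH^2\T=4\T$, and the splitting of the resulting extension $0\to\GW(k)\to\CHW^0(\scr B\mu_{2n},\cal O)\to\W(k)\to 0$ by the class $\U$ via \Cref{lemma:splitting CW0}. No gaps; the twist bookkeeping and the identification of generators match the paper's argument.
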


    \begin{proof}
    It is clear that the classes $\T$ and $\HH$ are the pull-backs to $\scr B\mu_{2n}$ of the classes $\T$ and $\HH$ on $\scr B\Gm$. As $\CHW^\ast(\scr B\mu_{2n},\bullet)$ has a natural $\CHW^\ast(\scr B\Gm,\bullet)$-algebra structure, we will adopt the same notation for $\T$ and $\HH$ on both $\scr B\mu_{2n}$ and $\scr B\Gm$.
    
    The localization sequence induced by the decomposition \eqref{eqn:loc_seq_Bmu_2n} of $\scr V_{-2n}$, in light of \ref{sub:can_loc_sequence_on_a_bundle}, gives a long exact sequence
    \[\xymatrix{
    \CHW^{i-1}(\scr B\Gm,\bullet\otimes \cal U^{\otimes 2n}) \ar[r]^(0.6){e\cdot} & \CHW^i(\scr B\Gm,\bullet) \ar[r]^{(j\circ p) ^\ast} & \CHW^i(\scr B\mu_{2n},\bullet) \ar[r]^(0.4){\partial} & H^i(\scr B\Gm, \KMW_{i-1},\bullet\otimes\cal U^{\otimes 2n}),
    } \]
    where $e$ denotes the Euler class $e(\cal U^{\otimes 2n})=-n\T\HH$ (see \ref{subsub:Eul_class_of_Un}).
    By \Cref{prop:CW ring BGm} and \Cref{prop:KMW cohomology BGm}, for $i\geq 1$ and $\bullet \simeq \cal U^{\otimes i+1}$ (modulo squares) we get
  \[\begin{cases}
    \ZZ\cdot\T^{i-1} \xrightarrow{n\T\HH} \ZZ\cdot \T^i\HH \ra \CHW^i(\scr B\mu_{2n},\cal U^{\otimes i+1}) \ra 0, & \text{ if } i \geq 2\\
    \GW\cdot\T^{i-1} \xrightarrow{n\T\HH} \ZZ\cdot \T^i\HH \ra \CHW^i(\scr B\mu_{2n},\cal U^{\otimes i+1}) \ra 0, & \text{ if } i=1.
    \end{cases}\]
    where the zeroes on the right come from the fact that for $i\geq 1$ we have $H^i(\scr B\Gm, \KMW_{i-1},\bullet\otimes\cal U^{\otimes 2n})$ is a subgroup of $\KM_{-1}$, and this last group is zero (see \Cref{prop:KMW cohomology BGm}). This allows us to conclude that $\CHW^i(\scr B\mu_{2n}, \cal U^{\otimes i+1})\simeq \ZZ/n\ZZ\cdot\T^i\HH$. Depending on the parity of $i$, we get the claimed results for either $\CHW^i(\scr B\mu_{2n})$ or $\CHW^i(\scr B\mu_{2n},\cal U)$.
    
    For $i\geq 1$ and $\bullet\simeq \cal U^{\otimes i}$ we have instead exact sequences
    \[ \ZZ\cdot\T^{i-1}\HH \xrightarrow{n\T\HH} \ZZ\cdot\T^i \ra \CHW^i(\scr B\mu_{2n},\cal U^{\otimes i}) \ra 0, \]
    and thus $\CHW^i(\scr B\mu_{2n}, \cal U^{\otimes i})\simeq \ZZ/4n\ZZ\cdot\T^i$, since $\HH^2\T=2h\T=4\T$.

    We are left with the case where $i=0$. When $\bullet\simeq \cal U$ we have
    \[ 0\ra \ZZ\cdot \HH \ra \CHW^0(\scr B\mu_{2n},\cal U) \ra 0. \]
    When instead $\bullet \simeq \cal O$ it is easily deduced from the $\W$-cohomology of $\scr B \bb G_m$ (see \Cref{prop:KMW cohomology BGm}) that we have an extension
    \begin{equation}\label{eq:ses Bmu2} 0 \ra \GW(k)\cdot 1 \ra \CHW^0(\scr B\mu_{2n},\cal O) \overset{\partial}{\ra} \W(k) \ra 0 \end{equation}
    By \Cref{lemma:splitting CW0} we know that $\partial(\U)$ is a unit in $\W(k)$, hence the morphism
    \[ W(k)\ra \CHW^0(\scr B\mu_{2n},\cal O) \]
    given by multiplication by $\partial(U)^{-1}$ and followed by multiplication by $\U$ splits $\partial$ in the extension above. In particular $\CHW^0(\scr B\mu_{2n})\simeq \GW(k)\oplus \W(k)$, and this concludes.
    \end{proof}


    \subsection{Multiplicative structure}
    \subsubsection{}
      \label{subsub:the_approx_Q}
      
    The line bundle $\bb V(\cal O(-2n))\ra \bb P^1$ is an equivariant approximation of $\scr V_{-2n}\ra \scr B\Gm$, and the complement of the zero section $s_0$ is an equivariant approximation of $\scr B\mu_{2n}$. We set $Q:=\bb V(\cal O(-2n))\smallsetminus s_0$.
    
    Let $X_0$ and $X_1$ be homogeneous coordinates on $\bb P^1$, then we set $x=X_0/X_1$ and $y=X_1/X_0$. The rational function $x$ is a local coordinate for the open subscheme $U_1:=\{ X_1 \neq 0 \}$ and $y$ is a local coordinate for $U_0:=\{ X_0\neq 0 \}$.
    
    We chose a trivialization $\bb V(\cal O(-2n))_{|U_1}\simeq U_1 \times \bb A^1$: we call $t$ the coordinate for the factor $\bb A^1$ appearing in the trivialization.
    Similarly, we chose $\bb V(\cal O(-2n))_{|U_0}\simeq U_0 \times \bb A^1$, and we call $s$ the coordinate for $\bb A^1$.
    Observe that in $(U_0\cap U_1)\times\bb A^1$ we have $x=1/y$ and $t=y^{2n} s$. The open subschemes $U_0\times (\bb A^1\smallsetminus\{0\})$ and $U_1\times (\bb A^1\smallsetminus\{0\})$ are affine charts for $Q$.
    
    \subsection{}
    The map $\bb V(\cal O(-1))\ra \bb V(\cal O(-2n))$ given by the $2n^{\rm{th}}$-power defines an \'{e}tale cover of $Q=\bb V(\cal O(-2n))\smallsetminus s_0$ of degree $2n$; this in turn induces a map $\varphi:Q\ra\scr B\mu_{2n}$. We need the following result.

    \begin{lemma}\label{lemma:pullback varphi iso}
        The pull-back homomorphisms
        \[ \varphi^*:\CHW^0(\scr B\mu_{2n})\ra\CHW^0(Q),\quad \varphi^*:\CHW^1(\scr B\mu_{2n},\cal U) \ra \CHW^1(Q,\cal O(-1)) \]
        are both isomorphisms, where $\cal O(-1)$ is pulled back from $\bb P^1$.
    \end{lemma}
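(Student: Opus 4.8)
The plan is first to exhibit $\varphi$ as pulled back from the structure map $f\colon\bb P^1\to\scr B\Gm$, which classifies $\cal O_{\bb P^1}(-1)$ (so $f^*\cal U\simeq\cal O_{\bb P^1}(-1)$): since $\bb V_{\bb P^1}(\cal O(-2n))\simeq f^*\scr V_{-2n}$ one has $Q\simeq\bb P^1\times_{\scr B\Gm}\scr B\mu_{2n}$ with $\varphi$ the first projection, whence $\varphi^*\cal U\simeq\cal O_Q(-1)$ and $\varphi^*\T=e(\cal O_Q(1))$. I would then compare the localization sequence of \ref{sub:can_loc_sequence_on_a_bundle} attached to $s_0(\scr B\Gm)\subset\scr V_{-2n}\supset\scr B\mu_{2n}$ with the one attached to $s_0(\bb P^1)\subset\bb V_{\bb P^1}(\cal O(-2n))\supset Q$, the two being connected by $f^*$ and $\varphi^*$, and feed in Fasel's computations of the twisted Chow--Witt and $\KMW_\ast$-cohomology of $\bb P^1$ (\cite{MR3061003}).

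For $\CHW^0$ the argument is short: with $\mu_{2n}$ acting diagonally on $\bb A^2$ (so that the action on $\bb A^2\smallsetminus\{0\}$ is free), the pair $(\bb A^2,\bb A^2\smallsetminus\{0\})$ is an equivariant scheme approximation of $\scr B\mu_{2n}$ in codimension $\leq1$; using \ref{sub: O(-1)} one identifies the resulting quotient with $Q$ and the resulting map $(\spec k)\times_{\mu_{2n}}(\bb A^2\smallsetminus\{0\})\to\scr B\mu_{2n}$ with $\varphi$. As $\CHW^0$ of a quotient stack is computed by any approximation in codimension $\leq1$, \Cref{prop:indep_of_choices_in_def_of_equiv_hom} gives that $\varphi^*\colon\CHW^0(\scr B\mu_{2n})\to\CHW^0(Q)$ is an isomorphism.

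For $\CHW^1(-,\cal U)$ the approximation $Q$ is too small, so I would compute $\CHW^1(Q,\cal O(-1))$ directly. Since $\cal O(-2n-1)\simeq\cal O(-1)$ modulo squares, the localization sequence gives
\[\CHW^0(\bb P^1,\cal O(-1))\xrightarrow{\,e(\cal O(-2n))\cdot\,}\CHW^1(\bb P^1,\cal O(-1))\longrightarrow\CHW^1(Q,\cal O(-1))\xrightarrow{\ \partial\ }H^{1,0}(\bb P^1,\KMW_\ast,\cal O(-1)).\]
The inputs I would record are: $\CHW^0(\bb P^1,\cal O(-1))\cong\ZZ$, generated by $\HH':=f^*\HH$; $\CHW^1(\bb P^1,\cal O(-1))\cong\CHW_0(\bb P^1,\cal O(1))\cong\ZZ$ (Poincar\'e duality, $\Omega^1_{\bb P^1}\simeq\cal O(-2)$), generated by $\T':=f^*\T=e(\cal O_{\bb P^1}(1))$; and, crucially, $H^{1,0}(\bb P^1,\KMW_\ast,\cal O(-1))=0$. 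This last point is the one that genuinely needs checking: from the sheaf sequence $0\to\cal I^1(\cal O(-1))\to\underline{\GW}(\cal O(-1))\to\ZZ\to0$, a short computation on the standard affine cover of $\bb P^1$ gives $H^1(\bb P^1,\cal I^1(\cal O(-1)))\cong\ZZ/2$, and the connecting homomorphism carries the rank class $1\in H^0(\bb P^1,\ZZ)$ to its nonzero element, so $H^1(\bb P^1,\underline{\GW}(\cal O(-1)))=0$. Granting this, $\partial$ vanishes; and pulling back $e(\cal U^{\otimes2n})=-n\T\HH$ (cf.\ \ref{subsub:Eul_class_of_Un}) along $f$, with $(\HH')^2=2h$ and $h\T'=2\T'$ (from $I\cdot\T=0$), the Euler map sends $\HH'\mapsto-4n\T'$; hence $\CHW^1(Q,\cal O(-1))\cong\ZZ/4n\cdot\T'$. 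By \Cref{prop:CW groups Bmu2n} the same reasoning over $\scr B\Gm$ gives $\CHW^1(\scr B\mu_{2n},\cal U)\cong\ZZ/4n\cdot\T$ (again the boundary dies, its target being $\KM_{-1}(k)=0$). Since $f^*\colon\CHW^j(\scr B\Gm,\cal U)\to\CHW^j(\bb P^1,\cal O(-1))$ carries the distinguished generator to the distinguished generator for $j=0,1$ (and is therefore an isomorphism in both degrees), the induced map on cokernels is exactly $\varphi^*\colon\ZZ/4n\cdot\T\to\ZZ/4n\cdot\T'$, $\T\mapsto\T'$, an isomorphism.

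The part I expect to be the main obstacle is precisely the explicit computation of these twisted invariants of $\bb P^1$ --- above all the vanishing $H^{1,0}(\bb P^1,\KMW_\ast,\cal O(-1))=0$, which is what lets the comparison go through even though $\bb P^1$ is not an honest approximation of $\scr B\Gm$ in codimension $1$ --- together with the careful bookkeeping of twists and signs needed to pin down the Euler-class maps on the nose.
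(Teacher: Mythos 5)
Your argument is correct and follows essentially the same route as the paper: degree $0$ is handled by recognizing $Q$ as an equivariant scheme approximation of $\scr B\mu_{2n}$ in low codimension, and degree $1$ by the localization sequence for $Q\subset\bb V(\cal O(-2n))$ over $\bb P^1$, the vanishing of $H^{1,0}(\bb P^1,\KMW_\ast,\cal O(-1))$, and the identification of the Euler map with multiplication by $-4n$ on the generator. The only difference is that the paper simply cites Fasel's computations for $\bb P^1$ (Theorem 11.7 and Corollary 11.8 of \cite{MR3061003}) where you re-derive the needed inputs by hand; your derivations agree with those results.
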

    
    \begin{proof}
    Observe that $Q$ is an equivariant scheme approximation of $\scr B\mu_{2n}$ in codimension $0$, hence their Chow-Witt groups in degree zero are isomorphic. We cannot apply the same argument in degree $1$ because $Q$ is not an equivariant scheme approximation of $\scr B\mu_{2n}$ in codimension $1$.
    
    Nevertheless, the open embedding $Q\hookrightarrow \bb V(\cal O(-2n))$ induces a long exact sequence that, after identifying the Chow-Witt groups of $\bb P^1$ and $\bb V(\cal O(-2n))$, looks like
     \[\xymatrix{
                \CHW^{0}(\bb P^1,\cal O(-1)\otimes \cal U^{\otimes 2n}) \ar[r]^(0.6){e\cdot} & \CHW^1(\bb P^1,\cal O(-1)) \ar[r]^{(j\circ p) ^\ast} & \CHW^1(Q,\cal O(-1)) \ar[r]^(0.4){\partial} & H^1(\bb P^1, \KMW_{0},\cal O(-2n-1)).
                } \]
    We can use \cite[Corollary 11.8]{MR3061003} to write down explicitly the Chow-Witt groups above. We obtain
    \[\xymatrix{ 2\bb Z \ar[r]^{-2n\cdot \T } & \bb Z\cdot \T \ar[r] & \CHW^1(Q,\cal O(-1)) \ar[r] & H^1(\bb P^1, \KMW_{0},\cal O(-2n-1)). }\]
    From \cite[Theorem 11.7]{MR3061003} we get that the last term is zero, so $\CHW^1(Q,\cal O(-1))\simeq \bb Z /4n \cdot \T$. 
    
    As the pullback map $ \varphi^*:\CHW^1(\scr B\mu_{2n},\cal U) \ra \CHW^1(Q,\cal O(-1))$ sends $\T$ to $\T$, by looking at the formula for $\CHW^1(\scr B\mu_{2n},\cal U)$ given in \Cref{prop:CW groups Bmu2n} we get the desired conclusion.
    \end{proof}
    
        We are ready to prove the main Theorem of this section.
    \begin{thm}\label{thm:CW ring of Bmu2n}
        Let $k$ be a field whose characteristic is co-prime with $2n$. Then we have an isomorphism of $\GW(k)$-algebras 
        \[ \CHW^*(\scr B\mu_{2n},\bullet)\simeq\GW(k)[\T,\HH,\U]/(I\cdot\T,I\cdot\HH,h\U,\HH\U, n\T\HH, \HH^2-2h,\U^2+2\U,\T\U- 2n\T)\]
        given by mapping $\T\mapsto e(\cal U^{\vee})$, $\HH\mapsto (0,2)$, and $\U$ to the element introduced in \Cref{defin:class u}.
    \end{thm}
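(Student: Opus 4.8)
The plan is to construct the evident $\GW(k)$-algebra homomorphism $\Phi$ from the displayed quotient ring onto $\CHW^*(\scr B\mu_{2n},\bullet)$, sending $\T\mapsto e(\cal U^\vee)$, $\HH\mapsto(0,2)$ and $\U\mapsto q_{gen}-1$, to verify that the listed relations hold (so that $\Phi$ is well defined), that $\Phi$ is surjective, and finally to match graded pieces to conclude it is an isomorphism. Among the relations, $I\cdot\T$, $I\cdot\HH$ and $\HH^2-2h$ already hold in $\CHW^*(\scr B\Gm,\bullet)$ by \Cref{prop:CW ring BGm} and pull back along $\scr B\mu_{2n}\to\scr B\Gm$; the relation $h\U=0$ is part of \Cref{lemma:splitting CW0}; and $\U^2+2\U=0$ holds because $q_{gen}$ is the class of a rank-one form, so $q_{gen}^2=1$ in $\CHW^0(\scr B\mu_{2n},\cal O)$ (which injects into $\GW$ of the function field of the scheme approximation $Q$), whence $\U^2=q_{gen}^2-2q_{gen}+1=-2\U$. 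The relations $\HH\U=0$ and $n\T\HH=0$ are then forced by the additive computation of \Cref{prop:CW groups Bmu2n}: by \Cref{prop:fundamental diagram} the group $\CHW^0(\scr B\mu_{2n},\cal U)\simeq\ZZ\cdot\HH$ injects into $H^{0,0}(\scr B\mu_{2n},\I^\ast,\cal U)\times\CH^0(\scr B\mu_{2n})$, in which $\HH\U$ vanishes in both factors (the first because $\HH$ does, the second because $\U$ does, by \Cref{rmk:class U}); and $\T\HH\in\CHW^1(\scr B\mu_{2n},\cal O)\simeq\ZZ/n\cdot\HH\T$ is $n$-torsion.

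The \emph{main obstacle} is the remaining relation $\T\U=2n\T$, precisely the part of the ring that is invisible to ordinary Chow theory. Both sides live in $\CHW^1(\scr B\mu_{2n},\cal U)\simeq\ZZ/4n\cdot\T$, and by \Cref{lemma:pullback varphi iso} it is enough to check the equality after $\varphi^\ast$, i.e.\ on the smooth scheme $Q=\bb V_{\bb P^1}(\cal O(-2n))\smallsetminus s_0$, where $\varphi^\ast\cal U=\cal O(-1)$ and the trivialization $\sigma$ of $\cal U^{\otimes 2n}$ becomes the fibre coordinate in the charts of \ref{subsub:the_approx_Q}. There $\varphi^\ast q_{gen}$ is generically the diagonal form $\langle t\rangle$, and I would obtain $\T\cdot q_{gen}=(2n+1)\T$ by computing $e(\cal U^{\otimes(-2n-1)})$ in two ways: it equals $(2n+1)\T$ by \ref{subsub:Eul_class_of_Un}, while via the trivialization $\sigma$ of $\cal U^{\otimes 2n}$ one has $\cal U^{\otimes(-2n-1)}\simeq\cal U^\vee$ twisted by the square class $q_{gen}$, so by the standard change-of-trivialization behaviour of Euler classes $e(\cal U^{\otimes(-2n-1)})=q_{gen}\cdot e(\cal U^\vee)=q_{gen}\T$. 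Comparing the two expressions and substituting $q_{gen}=1+\U$ yields $\T\U=2n\T$. (Equivalently, one can compute the residue of $\T\cdot\langle t\rangle$ along the relevant codimension-one point of $Q$ directly, exactly as in the proof of \Cref{lemma:splitting CW0}.)

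Finally, $\Phi$ is surjective because by \Cref{prop:CW groups Bmu2n} every bidegree-and-twist component of $\CHW^*(\scr B\mu_{2n},\bullet)$ is cyclic over $\GW(k)$ on a monomial in $\T,\HH,\U$, all of which are hit by $\Phi$. For injectivity I would put the monomials of the source in normal form: $\HH^2=2h$ bounds the $\HH$-exponent by $1$, $\U^2=-2\U$ bounds the $\U$-exponent by $1$, $\HH\U=0$ excludes mixed monomials, and $\T\U=2n\T$ rewrites $\T^a\U=2n\T^a$ for $a\ge1$. Thus the source is $\GW(k)$-spanned by $1$ and $\U$ in codimension $0$ and twist $\cal O$, by $\HH$ in codimension $0$ and twist $\cal U$, and by $\T^i$ (twist $\cal U^i$) and $\T^i\HH$ (twist $\cal U^{i+1}$) in codimension $i\ge1$; the relations $I\cdot\T=I\cdot\HH=h\U=0$, $\U^2+2\U=0$, together with the consequence $4n\T=0$ (deduced from $n\T\HH=0$, $\HH^2=2h$ and $(h-2)\T=0$), identify these spans with $\GW(k)\oplus\W(k)$, a quotient of $\ZZ$, a quotient of $\ZZ/4n$ and a quotient of $\ZZ/n$ respectively — exactly the groups of \Cref{prop:CW groups Bmu2n}. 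Since a surjection onto a finite group of which the source is already a quotient of the same order is an isomorphism, and in codimension $0$ and twist $\cal O$ the map visibly identifies the two $\GW(k)\oplus\W(k)$ presentations, $\Phi$ is an isomorphism in every bidegree and twist, as claimed.
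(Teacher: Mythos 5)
Your proposal is correct, and for most of the relations it runs parallel to the paper's proof: the relations involving only $\T$ and $\HH$ are pulled back from $\scr B\Gm$, $h\U=0$ and $\U^2+2\U=0$ come from the rank-one nature of $q_{gen}$, and $\HH\U=0$ is forced by the vanishing of $\HH\U$ in both $\I^\ast$-cohomology and $\CH^0$ (the paper phrases this via the rank map alone, using that $\CHW^0(\scr B\mu_{2n},\cal U)\simeq\ZZ\cdot\HH$ injects into $\CH^0$ by the rank; the two arguments are interchangeable). The genuine divergence is in the hardest relation, $\T\U=2n\T$. The paper reduces to the approximation $Q$ via \Cref{lemma:pullback varphi iso} and then performs an explicit Rost--Schmid computation: it represents $\T$ by the Cartier divisors $C$ and $D$, computes $e(\cal O(D))\cap\eta[t^{-1}]$ via \cite[Lemma 3.6]{FasLectures}, and exhibits the boundary of the symbol $[x^{2n}t]\otimes x$ as $(nh\langle t\rangle\otimes 1)\cdot[C]-\langle-1\rangle(\eta[t^{-1}]\otimes\overline y^\vee\otimes 1)\cdot[D]$, which encodes the relation $\U\T=2n\T$ directly in the cycle complex. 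You instead compute $e(\cal U^{\otimes(-2n-1)})$ in two ways: as $(2n+1)\T$ via the formula of \ref{subsub:Eul_class_of_Un}, and as $q_{gen}\cdot\T$ by comparing the canonical "modulo squares" identification of twists with the identification induced by the trivialization $\sigma$ of $\cal U^{\otimes 2n}$. This is slicker and more conceptual (it explains \emph{why} the answer is $2n\T$), at the cost of resting on the unproved assertion that the two twist identifications differ exactly by multiplication by the square class $q_{gen}$ — a true statement, but one that ultimately requires unwinding the same local-generator bookkeeping that the paper's symbol computation makes explicit. Since you also indicate the direct residue computation as a fallback, I regard this as a legitimate alternative rather than a gap. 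Your closing normal-form/counting argument for injectivity (including the deduction $4n\T=0$ from $n\T\HH=0$ and $\HH^2=2h$) is actually more explicit than the paper, which leaves the final comparison with \Cref{prop:CW groups Bmu2n} implicit.
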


    \begin{proof}
    From \Cref{prop:CW groups Bmu2n} we know what are the generators of $\CHW^*(\scr B\mu_{2n},\bullet)$ as $\GW(k)$-algebra: the Euler class of the dual of the universal line bundle $\cal U^{\vee}$ on $\scr B\mu_{2n}$, the hyperbolic form $\HH$ and the element $\U$ introduced in \Cref{defin:class u}.

    The first two generators are pulled back from the Chow-Witt ring of $\scr B\Gm$, described in \Cref{prop:CW ring BGm}, thus all the relations in this last ring hold  also in the Chow-Witt ring of $\scr B\mu_{2n}$. This shows that the only thing left to determine is the product of $\U$ with itself and the other generators. 

    First note that by construction $h\U$ is zero, while $n\T\HH$ is the Euler class of $\cal U^{\otimes -2n}$,and thus $n\T\HH$ vanishes as well.
    
    From the additive structure we know that $\HH \U=m\HH$ for some $m\in\bb Z$. We observed in \Cref{rmk:class U} that $\U$ is sent to zero by the map $\CHW^0(\scr B\mu_{2n})\ra \CH^0(\scr B\mu_{2n})$, hence also $\HH\U$ is sent to zero.
    We already mentioned that $Q$ is an equivariant scheme approximation of $\scr B\mu_{2n}$ in codimension $0$, hence $\CH^0(\scr B\mu_{2n})\simeq\CH^0(Q)\simeq\bb Z$, which is torsion free.
    As the map $\CHW^0(\scr B\mu_{2n})\ra \CH^0(\scr B\mu_{2n})$ maps $m\HH$ to $2m$, we deduce that $m=0$.
    
    Thanks to \Cref{lemma:pullback varphi iso}, the relation $\U^2+2\U=0$ can be checked on the equivariant approximation $Q$ introduced in \ref{subsub:the_approx_Q}, from which we adopt the notation from now on.
    The function field of $Q$ is $k(x,t)$ and the pull-back of the element $\U$, regarded as an element in $\GW(k(x,t))$ is $\langle t^{-1}\rangle-1$, hence we have 
    \[ (\langle t^{-1} \rangle -1)^2 = \langle t^{-2} \rangle -2\langle t^{-1} \rangle + 1 = 2(1-\langle t^{-1} \rangle), \]
    from which the sought relation follows.
    
    We are left with the computation of $\T\U$. By Lemma \ref{lemma:pullback varphi iso} we can check the relation on the approximation $Q$. Hereafter we denote by $\U$, $\HH$, $\T$ and $\cal U$ the restriction to $Q$ of the corresponding elements on $\scr B\mu_{2n}$. 


        The element $\U$ in $\CHW^0(Q)$, regarded as an element of $\GW(k(Q))$, is $\langle t^{-1} \rangle -1 = \eta[t^{-1}]$.
    
      Let $C$ be the Cartier divisor $\{(U_0\times (\bb A^1\smallsetminus\{0\}),1),(U_1\times(\bb A^1\smallsetminus\{0\}),x)\}$ and let $D$ be the Cartier divisor $\{(U_0\times (\bb A^1\smallsetminus\{0\}),y),(U_1\times (\bb A^1\smallsetminus\{0\}),1)\}$. Observe that the class of both these divisors in $\CHW^1(Q,\cal O(-1))$ is $\T$. Therefore to compute $\U\T$ we can leverage the description contained in \cite[Lemma 3.6]{FasLectures} for the product of an element with a Cartier divisor, using either $C$ or $D$ as model for $\T$.

    The divisor $D$ is contained in $U_0\times (\bb A^1\smallsetminus\{0\})$ and its associated ideal is generated by $y$. From \cite[Lemma 3.6]{FasLectures} we obtain:
    \[e(\cal O(D))\cap \eta[t^{-1}] = d(y)^0([y]\eta[t^{-1}]\otimes y)=(\eta \partial^{y}_\nu[y,t^{-1}] )\otimes y=\eta[t^{-1}]\otimes \overline{y}^{\vee}\otimes y \]
    In the second equality we used the commutativity of the product with $\eta$ (which follows from the definition of the boundary, see \cite[Theorem 1.7]{FasLectures}) and the definition of $d(y)^0$ as the part of the differential supported on $\{y =0 \}$ (see \cite[pg. 109]{FasLectures}. The third equality is obtained by combining the fact that $t^{-1}$ is a unity in $U_0$ together with the very definition of $\partial^{y}_\nu$. 

    The symbol produced in this way gives an element in $\CHW^1(Q,\cal O(-D))$. This last group is obviously isomorphic to $\CHW^1(Q,\cal O(-C))$, and such isomorphism sends $\eta[t^{-1}]\otimes \overline{y}^{\vee}\otimes y$ to $\eta[t^{-1}]\otimes \overline{y}^{\vee}\otimes 1$. We conclude that $\U\T$, as an element in $\CHW^1(Q,\cal O(-C))$, is equal to the equivalence class of $(\eta[t^{-1}]\otimes\overline{y}^\vee\otimes 1)\cdot [D]$, where the writing $(\alpha)\cdot [C]$ stands for the element in $\oplus_{x\in Q^{(1)}} \GW(k(x))$ that is $\alpha$ in $\GW(k(C))$ and zero otherwise.

    Consider now the boundary morphism in the Gersten complex
    \[ \KMW_1(k(Q),\cal O(-C)) \ra \bigoplus_{x\in Q^{(1)}} \KMW_0(k(x),(\fk m_x/\fk m_x^2)^{\vee}\otimes \cal O(-C)_x). \]
    We claim that the boundary of $[x^{2n}t]\otimes x$ is 
    \[ (nh\langle t\rangle \otimes 1)\cdot [C] - \langle -1 \rangle (\eta[t^{-1}]\otimes\overline{y}^\vee\otimes 1)\cdot [D]. \]
    Observe that in $\CHW^1(Q,\cal O(-C))$ we have that the first term is equal to $2n\T$, whereas the second one coincides with $\U\T$. As the boundary of an element determines a relation in the corresponding Chow-Witt group, this would imply that $\U\T=2n\T$, as expected.

    To prove the claim we only have to apply the rules for the computation of residues (\cite[Theorem 1.7]{FasLectures}. First observe that the residue of $[x^{2n}t]\otimes x$ along the points of codimension $1$ contained in $Q\smallsetminus (C\cup D)$ is always zero. Therefore we have
    \[\partial([x^{2n}t]\otimes x)=\partial_C([x^{2n}t]\otimes x)\cdot [C]+\partial_D([x^{2n}t]\otimes x)\cdot [D].\]
    For the first term we have:
    \begin{align*}
    \partial_C([x^{2n}t]\otimes x) = \partial^{x}_C([x^{2n}t])\otimes \overline{x}^{\vee}\otimes x = nh\langle t\rangle\otimes 1,
    \end{align*}
    because $C$ is contained in the affine open subset $U_1\times (\bb A^1\smallsetminus\{0\})$ where $t$ is invertible and $x$ is a local generator for $\cal O(-C)$. The last equality comes from combining the formula \cite[Lemma 1.3.(3)]{FasLectures} with the usual rules for the computation of residues (\cite[Theorem 1.7]{FasLectures}). For computing the second term, first observe that $D\subset U_0\times (\bb A^1\smallsetminus\{0\})$, and in this affine open subset $\cal O(-C)$ is generated by $1$. In particular, an element $\alpha\otimes\ell$ is equal to $\langle \ell \rangle\alpha\otimes 1$. Therefore
    \begin{align*}
    \partial_D([x^{2n}t]\otimes x) &= \partial_D(\langle x \rangle [x^{2n}t]\otimes 1) = \partial_D^{y}([x^{2n}t]+\eta[x,x^{2n}t])\otimes 1\\
        &=\partial_D^{y}([s])\otimes 1+\eta\partial_D^{y}([y^{-1},s])\otimes 1 = -\langle -1 \rangle \eta[s]\otimes \overline{y}^{\vee} \otimes 1.
    \end{align*}
    We have $\eta[s]=\langle s \rangle - 1 =\langle x^{2n}t \rangle - 1 = \eta[t]$ because $x^{2n}$ is obviously a square. For the same reason, we have $\eta[t]=\eta[t^{-1}]$, hence the last term above is equal in $\CHW^1(Q,\cal O(-D))$ to $-\langle -1 \rangle \U\T$, as expected. This concludes the proof.
    \end{proof}

\section{The Chow-Witt rings of $\Mbar_{1,1}$ and $\Mcal_{1,1}$}
\label{sec:Chow-Witt_ring_of_Mbar}
In this section we compute the Chow-Witt rings of $\Mbar_{1,1}$ (\Cref{thm:CW ring of Mbar}) and $\Mcal_{1,1}$ (\Cref{thm:CW ring of Mcal}). As in the previous section, we first determine the additive structure (\Cref{thm:additive structure Mbar} and \Cref{thm:additive structure Mcal}) and afterwards the multiplicative structure. The latter turns out to be basically determined by that of $\scr B\mu_{2}$ and $\scr B\mu_{12}$.
\subsection{The $\I^j$-cohomology of $\Mbar_{1,1}$}\label{sec:Ij cohomology of Mbar}
  We determine here the $\I^j$-cohomology of $\Mbar_{1,1}$. The result itself (\Cref{prop:Ij cohomology Mbar}) has some independent interest, which we highlight after the main proof. Moreover, this computation is useful for determining Chow-Witt groups.

  Recall that $\cal E$ denotes the Hodge bundle on $\Mbar_{1,1}$.
  \begin{prop}\label{prop:Ij cohomology Mbar}
     Suppose ${\rm{char}}(k)\neq 2,3$. Then we have:
     \begin{align*}
     &
     H^{i,j}(\Mbar_{1,1},\I^\ast, \cal O)\simeq \left\{
     \begin{tabular}{cl}
      $\I^j(k)$    & if $i=0$, \\
      $\I^{j-2}(k)$    & if $i=1$, \\
      $\Km_{j-i}(k)$ & if $i\geq 2$ even, \\
      $\Km_{j-i-1}(k)$ & if $i\geq 3$ odd.
     \end{tabular}
     \right.
     &
     H^{i,j}(\Mbar_{1,1},\I^\ast,\cal E)\simeq \left\{
     \begin{tabular}{cl}
      $\Km_{j-i-1(k)}$ & if $i$ even, \\
      $\Km_{j-i}(k)$ & if $i$ odd.
     \end{tabular}
     \right.
     \end{align*}
    Moreover we have an isomorphism of $\W(k)$-algebras
    \[ H^{\ast,\ast}(\Mbar_{1,1},\I^*,\bullet)\simeq \W(k)[\T,\E]/(\I\cdot \T, \E^2, \E\T) \]
    induced by mapping $\T$ to the Euler class of the dual of the Hodge line bundle in $H^{1,1}(\Mbar_{1,1},\I^*,\cal U,1)$, and $\E$ to a generator of the $\W(k)$-module $H^1(\Mbar_{1,1},\I^1,\cal O,0)$.    
  \end{prop}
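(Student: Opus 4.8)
The plan is to run the Gysin (localization) sequence for the complement of a zero section. By \Cref{prop:presentation_as_quotients}, $\Mbar_{1,1}$ is the complement of the zero section $s_0$ of a rank-two bundle $\cal W:=\scr V_{-4,-6}$ on $\scr B\Gm$; in terms of the universal line bundle one has $\cal W\cong\cal U^{\otimes4}\oplus\cal U^{\otimes6}$, and its pullback along the projection $\pi\colon\Mbar_{1,1}\to\scr B\Gm$ is precisely how the Hodge bundle $\cal E=\pi^\ast\cal U$ arises. The canonical sequence of \ref{sub:can_loc_sequence_on_a_bundle}, associated with $\scr B\Gm\xhookrightarrow{s_0}\cal W\hookleftarrow\Mbar_{1,1}$, then reads, for each twist $\cal L$ pulled back from $\scr B\Gm$,
\[\cdots\to H^{i-2,j-2}(\scr B\Gm,\I^\ast,D(\cal W)\otimes\cal L)\xrightarrow{\;e(\cal W)\cdot\;}H^{i,j}(\scr B\Gm,\I^\ast,\cal L)\to H^{i,j}(\Mbar_{1,1},\I^\ast,\cal L)\xrightarrow{\;\partial\;}H^{i-1,j-2}(\scr B\Gm,\I^\ast,D(\cal W)\otimes\cal L)\to\cdots\]

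The decisive input is that the $\I^\ast$-theoretic Euler class of $\cal W$ vanishes: by the Whitney sum formula $e(\cal W)=e(\cal U^{\otimes4})\cdot e(\cal U^{\otimes6})$, and by the formulas of \ref{subsub:Eul_class_of_Un} each even twist has $\KMW_\ast$-Euler class of the form $e(\cal U^{\otimes-2m})=m\,\T\HH$, which maps to $0$ under $\KMW_\ast\to\I^\ast$ since $\HH$ dies there ($h=0$ in $\W(k)$). Hence every map $e(\cal W)\cdot$ is zero, and the sequence degenerates into short exact sequences
\[0\to H^{i,j}(\scr B\Gm,\I^\ast,\cal L)\xrightarrow{\;\pi^\ast\;}H^{i,j}(\Mbar_{1,1},\I^\ast,\cal L)\xrightarrow{\;\partial\;}H^{i-1,j-2}(\scr B\Gm,\I^\ast,D(\cal W)\otimes\cal L)\to0 .\]
Since $\det\cal W=\cal U^{\otimes10}$ is a square, the twist $D(\cal W)\otimes\cal L$ coincides with $\cal L$ up to squares, which is invisible to $\I^\ast$-cohomology. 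Feeding in the two columns of \Cref{prop:Ij cohomology BGm} for $\cal L=\cal O$ and $\cal L=\cal U$, and noting that each extension splits (the groups are $\W(k)$- or $\bb Z/2$-modules and the $\partial$-sequence admits an evident section), one reads off the tabulated additive structure.

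For the multiplicative part, set $\T:=\pi^\ast e(\cal U^\vee)=e(\cal E^\vee)$; the relation $\I\cdot\T=0$ is inherited from \Cref{prop:Ij cohomology BGm} via $\pi^\ast$-functoriality. I would then take $\E$ to be a generator of the cyclic $\W(k)$-module $H^{1,1}(\Mbar_{1,1},\I^\ast,\cal O)$, which by the short exact sequence lies in the boundary part, with $\partial(\E)$ a generator of $H^{0,-1}(\scr B\Gm,\I^\ast,\cal O)=\W(k)$. The relation $\E\T=0$ is free: from the additive computation the group $H^{2,2}(\Mbar_{1,1},\I^\ast,\cal U)$ in which $\E\T$ lives is zero. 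For $\E^2=0$ one uses that $\E$ restricts to zero on $\Mcal_{1,1}=\Mbar_{1,1}\smallsetminus\Delta_0$ (it is a boundary class, pushed forward from $\Delta_0$), while the restriction $H^{2,2}(\Mbar_{1,1},\I^\ast,\cal O)\to H^{2,2}(\Mcal_{1,1},\I^\ast,\cal O)$ is injective on the cyclic group generated by $\pi^\ast\T^2$; so $\E^2$, being annihilated by restriction, must vanish. Finally one checks that $1,\T,\E$ generate everything over $\W(k)$ and that $(\I\cdot\T,\E^2,\E\T)$ exhaust the relations, by matching the presented algebra against the additive answer degree by degree and twist by twist, using that $\partial$ is $H^{\ast,\ast}(\scr B\Gm,\I^\ast,\bullet)$-linear.

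The step I expect to be the real obstacle is the bookkeeping of the Picard-graded twists (and of the grading integer) through the localization sequence — in particular the weight shift $(i,j)\mapsto(i-1,j-2)$ of $\partial$, and the identification $D(\cal W)\otimes\cal L\simeq\cal L$ — together with verifying that the resulting short exact sequences split compatibly with the ring structure. The vanishing of $e(\cal W)$ is the conceptual heart but is cheap once one knows $\HH\mapsto0$; the auxiliary relations $\E^2=\E\T=0$ require only the short ad hoc arguments above.
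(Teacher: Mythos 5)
Your overall strategy is exactly the paper's: present $\Mbar_{1,1}$ as the complement of the zero section of $\scr V_{-4,-6}\to\scr B\Gm$, observe that the $\I^\ast$-theoretic Euler class $e(\scr V_{-4,-6})$ vanishes (the paper computes $e=24\T^2\equiv 0$ in $\bb Z/2\cdot\T^2$; your route via $\HH\mapsto 0$ in $\I^\ast$-cohomology is equivalent), and split the localization sequence into short exact sequences. For the additive part your splitting discussion is moot and, as stated, unjustified: the correct observation (which the paper makes) is that for every $(i,j)$ and every twist one of the two outer terms $H^{i,j}(\scr B\Gm,\I^\ast,\cal L)$ and $H^{i-1,j-2}(\scr B\Gm,\I^\ast,\cal L)$ is already zero by \Cref{prop:Ij cohomology BGm} (the parities of $i$ and $i-1$ differ), so there is nothing to split and the table is read off directly. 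The relations $\I\cdot\T=0$ and $\E\T=0$ are handled as in the paper ($\E\T$ lives in $H^{2,2}(\Mbar_{1,1},\I^\ast,\cal E)=\Km_{-1}(k)=0$).

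The genuine gap is in your proof of $\E^2=0$. You assert that $\E$ "is a boundary class, pushed forward from $\Delta_0$" and hence restricts to zero on $\Mcal_{1,1}$. But the class $\E$ of this Proposition is characterized as a generator of $H^{1,1}(\Mbar_{1,1},\I^1,\cal O)\simeq\W(k)$, detected by the boundary of the localization sequence for the zero section $s_0(\scr B\Gm)\subset\scr V_{-4,-6}$ — not by the localization sequence for $\Delta_0\subset\Mbar_{1,1}$. That $\E$ lies in the image of $i_\ast\colon H^0(\Delta_0,\I^\ast,\cdot)\to H^{1,1}(\Mbar_{1,1},\I^\ast,\cal O)$ (equivalently, that $j^\ast\E=0$ on $\Mcal_{1,1}$) is true but is not free: it is essentially the content of the analysis the paper carries out only later, in Section \ref{sec:multiplicative structures}, and at this point neither it nor your second input — injectivity of $H^{2,2}(\Mbar_{1,1},\I^\ast,\cal O)\to H^{2,2}(\Mcal_{1,1},\I^\ast,\cal O)$ — has been established (the latter requires the localization sequence for the singular substack $\scr C$ and \Cref{lemma:normalization C}). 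The paper avoids all of this with a two-line internal argument you should substitute: $\E^2$ lies in $H^{2,2}(\Mbar_{1,1},\I^\ast,\cal O)\simeq\Km_0(k)=\bb Z/2$ generated by $\T^2$, so if $\E^2\neq 0$ then $\E^2=\T^2$, whence $\T^3=\E^2\T=\E\cdot(\E\T)=0$, contradicting the fact that $\T^3$ generates $H^{3,3}(\Mbar_{1,1},\I^\ast,\cal E)\simeq\Km_0(k)\neq 0$.
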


  \begin{proof}
  Recall from (\ref{prop:presentation_as_quotients}) that $\Mbar_{1,1}$ is the complement of the zero section of the vector bundle $p:\scr V_{-4,-6}\ra\scr B\Gm$; in addition the restriction of the line bundle $p^*\cal U\ra\scr V_{-4,-6}$ to $\Mbar_{1,1}$ is isomorphic to the Hodge line bundle $\cal E$.
  
  As we have recalled in \ref{sub:can_loc_sequence_on_a_bundle}\ the localization sequence associated to the decomposition 
  \[s_0(\scr B\bb G_m)\hra \scr V_{-4,-6}\hla \scr V_{-4,-6}\smallsetminus s_0(\scr B\bb G_m)\]
  reads as 
  \begin{equation*} 
  \begin{tikzcd}
    \ra  H^{i-2,j-2}(\scr B\Gm,\I^\ast,\cal U^{\otimes 10}\otimes \bullet) \rar{e\cdot} &  H^{i,j}(\scr B\Gm,\I^\ast, \bullet) \rar
               \ar[draw=none]{d}[name=X, anchor=center]{}
      & H^{i,j}(\Mbar_{1,1},\I^\ast,j^\ast p^\ast\bullet) \ar[rounded corners,
              to path={ -- ([xshift=2ex]\tikztostart.east)
                        |- (X.center) \tikztonodes
                        -| ([xshift=-2ex]\tikztotarget.west)
                        -- (\tikztotarget)}]{dll}[at end]{} \\      
      \rar{e\cdot} H^{i-1,j-2}(\scr B\Gm,\I^\ast,\cal U^{\otimes 10}\otimes \bullet) & H^{i+1,j}(\scr B\Gm,\I^\ast,\bullet) \rar & H^{i+1,j}(\Mbar_{1,1},\I^\ast,j^\ast p^\ast\bullet) \ra
  \end{tikzcd}
  \end{equation*}

  where $e\cdot$ is the multiplication by the $\I^\ast$-theoretic Euler class of $\scr V_{-4,-6}$. Observe that by definition $\scr V_{-4,-6}\simeq\cal U^{\otimes 4}\oplus \cal U^{\otimes 6}$ and that Euler class of a direct sum is the product of Euler classes (\cite[pg. 14]{MR3061003}). Using the formulas for the Euler class given in \ref{subsub:Eul_class_of_Un}, we deduce
  \[e(\scr V_{-4,-6})=e(\cal U^{\otimes 4})\cdot e(\cal U^{\otimes 6}) = -2\T\HH\cdot (-3\T\HH) = 24\T^2, \]
  which is zero in $H^2(\scr B\Gm,\I^2)\simeq \ZZ/2\cdot\T^2$ (see \Cref{prop:Ij cohomology BGm}).
  
  Our computation of the Euler class implies that for each $i,j$ we have a short exact sequence
  \[0\ra H^{i,j}(\scr B\Gm,\I^\ast, \bullet) \ra H^{i,j}(\Mbar_{1,1},\I^\ast,j^*p^*\bullet) \ra  H^{i-1,j-2}(\scr B\Gm,\I^\ast,\cal U^{\otimes 10}\otimes\bullet) \ra 0.   \]
  From \Cref{prop:Ij cohomology BGm} we see that for every choice of $i,j$ and for every choice of the twist, one of the two external terms in the sequence above is zero. The additive part of the statement can then be easily deduced from the $\I^\ast$-cohomology of $\scr B\Gm$.
 
  The multiplicative description is the only thing left. Let $\E$ be a generator of $H^1(\Mbar_{1,1},\I^1)$. Then $\E\cdot\T=0$ because $H^2(\Mbar_{1,1},\I^2,\cal U)=0$. This also implies that $\E^2=0$: if this was not the case, we would have $\E^2=\T^2$, hence $\T^3=\E^2\T=0$. This gives a complete description of the multiplicative structure of $H^*(\Mbar_{1,1},\I^*,\bullet)$ and concludes the proof of the Proposition.
  \end{proof}


\subsection{The Chow-Witt groups of $\Mbar_{1,1}$}
\subsection{}
Here we deal with the additive structure of $\CHW^\ast(\Mbar_{1,1},\bullet)$, using the usual description of this stack as a $\Gm$-quotient.
\begin{prop}\label{thm:additive structure Mbar}
    Let $\T$ be the Euler class of the line bundle $\cal E^\vee$, and let $\HH$ be the pull-back to $\Mbar_{1,1}$ of the element $(0,2)$ in $\CHW^0(\scr B\Gm,\cal U)$ introduced in \ref{sub:H}. 
    Moreover, let $\E'$ be any element whose image in $\CH^1(\Mbar_{1,1})$ is zero and that is sent to a generator by the boundary morphism 
    \[\partial:\CHW^1(\Mbar_{1,1},\cal O)\ra H^{0,-1}(\scr B\Gm,\KMW_\ast,\cal O)\simeq \W(k).\]
    Then the following description of $\CHW^*(\Mbar_{1,1},\bullet)$ holds:
    
    \[\begin{array}{c@{\hskip 0.17in}c@{\hskip 0.17in}c@{\hskip 0.17in}c@{\hskip 0.17in}c@{\hskip 0.17in}c@{\hskip 0.17in}c}
      \toprule
       Twist & 0 & 1 & 2 & 3 & 2k & 2k+1\\
      \midrule
       \cal O  & \GW(k)\cdot 1   & \bb Z\cdot \HH\T \oplus\W(k)\cdot\E' & \bb Z/24\cdot \T^2& \bb Z/24\cdot \HH\T^3 &\bb Z/24\cdot \T^{2k} & \bb Z/24\cdot \HH\T^{2k+1}\\
       \cal E & \bb Z\cdot\HH & \bb Z \cdot\T & \bb Z/24\cdot\HH\T^2 & \bb Z/24\cdot \T^3 & \bb Z/24\cdot\HH\T^{2k}& \bb Z/24\cdot\T^{2k+1}.\\
      \bottomrule
     \end{array}
     \]
\end{prop}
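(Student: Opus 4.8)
The plan is to compute every group by running the localization sequence of \ref{sub:can_loc_sequence_on_a_bundle} for the rank‑two bundle $\scr V_{-4,-6}=\cal U^{\otimes 4}\oplus\cal U^{\otimes 6}$ over $\scr B\Gm$, exactly as in the proof of \Cref{prop:Ij cohomology Mbar}, using that $\Mbar_{1,1}=\scr V_{-4,-6}\smallsetminus s_0(\scr B\Gm)$ and $\cal E=p^\ast\cal U$ (\Cref{prop:presentation_as_quotients}). First I would record the Euler class: by the Whitney formula and the formulas of \ref{subsub:Eul_class_of_Un}, together with $\HH^2=2h$ and $h\T=2\T$ from \Cref{prop:CW ring BGm},
\[e(\scr V_{-4,-6})=e(\cal U^{\otimes 4})\,e(\cal U^{\otimes 6})=(-2\T\HH)(-3\T\HH)=6\T^2\HH^2=12h\T^2=24\T^2\in\CHW^2(\scr B\Gm,\cal O).\]
Hence for each twist $\bullet\in\{\cal O,\cal E\}$ one gets, as in \ref{sub:can_loc_sequence_on_a_bundle},
\[\CHW^{i-2}(\scr B\Gm,\bullet)\xrightarrow{\ \cdot 24\T^2\ }\CHW^i(\scr B\Gm,\bullet)\xrightarrow{\ j^\ast\ }\CHW^i(\Mbar_{1,1},\bullet)\xrightarrow{\ \partial\ }H^{i-1,i-2}(\scr B\Gm,\KMW_\ast,\bullet)\xrightarrow{\ \cdot 24\T^2\ }H^{i+1,i}(\scr B\Gm,\KMW_\ast,\bullet),\]
all the twists over $\scr B\Gm$ being modified by $\cal U^{\otimes 10}$, hence unchanged up to squares.

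Next I would feed in \Cref{prop:CW ring BGm} and \Cref{prop:KMW cohomology BGm}. The key bookkeeping point is that $H^{i-1,i-2}(\scr B\Gm,\KMW_\ast,\bullet)$ always involves $\KM_{(i-2)-(i-1)}(k)=\KM_{-1}(k)=0$, hence vanishes, \emph{except} for $(i,\bullet)=(1,\cal O)$, where it equals $H^{0,-1}(\scr B\Gm,\KMW_\ast,\cal O)=\KMW_{-1}(k)=\W(k)$. Therefore for every $(i,\bullet)\neq(1,\cal O)$ one has $\CHW^i(\Mbar_{1,1},\bullet)\simeq\coker\bigl(\cdot 24\T^2\colon\CHW^{i-2}(\scr B\Gm,\bullet)\to\CHW^i(\scr B\Gm,\bullet)\bigr)$; since $\CHW^i(\scr B\Gm,\cal O)$ is $\GW(k)$ for $i=0$ and $\mathbb Z$ (on $\T^i$, resp.\ $\HH\T^i$) for $i\geq1$, while $\CHW^i(\scr B\Gm,\cal U)\simeq\mathbb Z$ for all $i\geq0$, and multiplication by $24\T^2$ is multiplication by $24$ on the relevant free generator for $i\geq3$ (and has image $24\mathbb Z$ on $\GW(k)\to\mathbb Z\cdot\T^2$, resp.\ $\mathbb Z\cdot\HH\to\mathbb Z\cdot\HH\T^2$), this cokernel is read off directly: $\GW(k)\cdot 1$, $\mathbb Z\cdot\HH$, $\mathbb Z\cdot\T$ in low degrees and $\mathbb Z/24$ generated by the evident monomial in every degree $i\geq2$, matching the table.

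The remaining case $(i,\bullet)=(1,\cal O)$ requires resolving an extension. Since $H^{2,1}(\scr B\Gm,\KMW_\ast,\cal O)=\KM_{-1}(k)=0$, the boundary is surjective and we get a short exact sequence of $\GW(k)$‑modules
\[0\longrightarrow\mathbb Z\cdot\HH\T\longrightarrow\CHW^1(\Mbar_{1,1},\cal O)\xrightarrow{\ \partial\ }\W(k)\longrightarrow0,\]
the subgroup being $\im j^\ast$, so $\partial(\HH\T)=0$. I would split this by exhibiting a class $\E'$ with $\partial(\E')$ a generator of $\W(k)$ and zero image in $\CH^1(\Mbar_{1,1})$: for any such $\E'$ one has $h\E'\in\ker\partial=\mathbb Z\cdot\HH\T$ (as $h=0$ in $\W(k)=\KMW_{-1}(k)$), and $2\cdot(\text{image of }\E'\text{ in }\CH^1)=0$ forces $h\E'=0$, so multiplication by $\E'$ factors through a $\GW(k)$‑linear map $\W(k)\to\CHW^1(\Mbar_{1,1},\cal O)$ whose composite with $\partial$ is an isomorphism of $\W(k)$; hence $\W(k)\cdot\E'$ is a complement to $\mathbb Z\cdot\HH\T$. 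A concrete choice is $\E'=i_\ast(\U)$ where $i\colon\Delta_0\hookrightarrow\Mbar_{1,1}$ is the nodal divisor, $\Delta_0\simeq\scr B\mu_2$ (\ref{subec:localization sequence Mcal}), $\U\in\CHW^0(\scr B\mu_2,\cal O)$ is the class of \Cref{defin:class u}, and $\det N_{\Delta_0/\Mbar_{1,1}}=\cal E^{\otimes 12}|_{\Delta_0}$ is trivial ($\cal U^{\otimes2}$ being trivial on $\scr B\mu_2$): then $h\E'=i_\ast(h\U)=0$ by \Cref{thm:CW ring of Bmu2n}, and $\E'\mapsto i_\ast(0)=0$ in $\CH^1$ by \Cref{rmk:class U}, so only $\partial(\E')$ being a generator needs checking. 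Equivalently, via \Cref{prop:fundamental diagram}: $K^1(\Mbar_{1,1},\cal O)=0$ since $\CH^1(\Mbar_{1,1})\simeq\mathbb Z$ is torsion‑free, so $\CHW^1(\Mbar_{1,1},\cal O)$ is the fibre product $H^{1,1}(\Mbar_{1,1},\I^\ast,\cal O)\times_{\CH^1(\Mbar_{1,1})\otimes\mathbb Z/2}\CH^1(\Mbar_{1,1})\simeq\W(k)\times_{\mathbb Z/2}\mathbb Z$ of \Cref{prop:Ij cohomology Mbar}, and the reduction $\W(k)\to\CH^1(\Mbar_{1,1})\otimes\mathbb Z/2$ is zero because it sends the generator $\E$ to $[\Delta_0]=12\T\equiv0\bmod 2$, so the fibre product is $\W(k)\oplus 2\mathbb Z\cdot\T=\W(k)\cdot\E'\oplus\mathbb Z\cdot\HH\T$.

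The main obstacle is precisely this last splitting: showing that the boundary of the distinguished degree‑one untwisted class hits a generator of $\W(k)$, equivalently that the mod‑$2$ Chow reduction of the new $\I^\ast$‑class vanishes. This is where the geometry genuinely enters — through the class $[\Delta_0]=12\T$ of the nodal divisor (the discriminant has weight $12$) and the relation $h\U=0$ on $\scr B\mu_2$ from \Cref{thm:CW ring of Bmu2n}; everything else is a mechanical cokernel computation over $\scr B\Gm$.
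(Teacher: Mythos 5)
Your overall strategy is the paper's: the localization sequence for $s_0(\scr B\Gm)\subset\scr V_{-4,-6}\supset\Mbar_{1,1}$, the Euler class $e(\scr V_{-4,-6})=24\T^2$, and the resulting cokernel computations in all bidegrees except $(1,\cal O)$ are carried out identically and correctly. The one genuinely delicate point of the proof is the splitting of the extension $0\to\bb Z\cdot\HH\T\to\CHW^1(\Mbar_{1,1},\cal O)\to\W(k)\to 0$, and this is exactly where your argument has a gap. In your first route you propose $\E'=i_\ast\U$ and correctly reduce everything to the single claim that $\partial(i_\ast\U)$ generates $\W(k)$ --- but you never verify this claim, and it is not free: $\partial$ here is the residue at the removed zero section (a codimension-two stratum of $\scr V_{-4,-6}$), not at $\Delta_0$, so one would need an explicit second-order residue computation. (The paper's own verification that $i_\ast\U$ generates the Witt summand, in Section 5.4, \emph{uses} the additive structure of both $\Mbar_{1,1}$ and $\Mcal_{1,1}$, so importing it here would be circular.)

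Your second route is essentially the paper's: use \Cref{prop:fundamental diagram} and torsion-freeness of $\CH^1(\Mbar_{1,1})$ to identify $\CHW^1(\Mbar_{1,1},\cal O)$ with the fibre product of $H^{1,1}(\Mbar_{1,1},\I^\ast,\cal O)\simeq\W(k)$ and $\CH^1(\Mbar_{1,1})\simeq\bb Z$ over $\Ch^1(\Mbar_{1,1})$, so that everything hinges on the vanishing of the map $z\colon H^{1,1}(\Mbar_{1,1},\I^\ast,\cal O)\to\Ch^1(\Mbar_{1,1})$. You assert $z=0$ ``because it sends the generator $\E$ to $[\Delta_0]=12\T$,'' but at this stage of the argument no geometric representative of the $\W(k)$-generator of $H^{1,1}(\Mbar_{1,1},\I^\ast,\cal O)$ supported on $\Delta_0$ has been produced (that identification only appears later, and itself depends on the present proposition), so this is an unproved assertion rather than a proof. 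The paper instead establishes $z=0$ purely homologically: from the short exact sequence $0\to\I^2\to\I^1\to\Km_1\to 0$ one gets a commutative ladder comparing $\scr V_{-4,-6}$ with $\Mbar_{1,1}$; the restriction maps on $\Km_1$- and on $H^{2,2}(-,\I^\ast)$-cohomology are isomorphisms (the Euler class dies mod $2$ and in $\I^\ast$-cohomology), while $H^{1,1}(\scr V_{-4,-6},\I^\ast,\cal O)\simeq H^{1,1}(\scr B\Gm,\I^\ast,\cal O)=0$, and a diagram chase forces $z=0$. You should either supply that ladder argument, or directly compute the residue of a concrete cycle representing the generator. A final minor omission: once the fibre product is identified as $\W(k)\oplus 2\bb Z\cdot\T$, one still has to check that a generator of the Witt summand is sent to a generator of $\W(k)$ by the $\KMW_\ast$-boundary $\partial$ (the paper does this via the compatibility square between the $\KMW_\ast$- and $\I^\ast$-boundaries coming from $0\to 2\KM_\ast\to\KMW_\ast\to\I^\ast\to 0$); your formulation defines $\E'$ by the $\partial$-condition and so quietly shifts this into the existence question above.
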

\begin{proof}

As in the proof of \Cref{prop:Ij cohomology Mbar} we have an open-closed decomposition
\[s_0(\scr B\bb G_m)\hra \scr V_{-4,-6}\hla \scr V_{-4,-6}\smallsetminus s_0(\scr B\bb G_m)\simeq \Mbar_{1,1}.\]
The induced localization sequence, combined with what we observed in \ref{sub:can_loc_sequence_on_a_bundle}, determines almost completely the additive structure. More in detail: since by definition $\scr V_{-4,-6}\simeq \cal U^{\otimes 4} \oplus \cal U^{\otimes 6}$, and because Euler class of a direct sum of vector bundles is the product of Euler classes (see \cite[Proposition 13.3.2]{FGCW}), we deduce 
\[e(\scr V_{-4,-6})=e(\cal U^{\otimes 4})e(\cal U^{\otimes 6})=-2\T\HH\cdot (-3\T\HH)=24\T^2,\]
where in the second equality we used the formulas for the Euler class given in \ref{subsub:Eul_class_of_Un}.
Using \Cref{prop:CW ring BGm} and \Cref{prop:KMW cohomology BGm}, for $i\geq 2$ we get exact sequences:
\begin{equation*}
    \left \{
    \begin{tabular}{cl}
    $\ZZ\cdot\T^{i-2} \xrightarrow{24\T^2} \ZZ\cdot\T^i \ra \CHW^i(\Mbar_{1,1},\bullet) \ra 0$ & for $\bullet\simeq \cal E^{\otimes i}$ (modulo squares);\\
    $\ZZ \cdot \T^{i-2}\HH \xrightarrow{24\T^2} \ZZ\cdot \T^i\HH \ra \CHW^i(\Mbar_{1,1},\bullet) \ra 0$ &  for $\bullet\simeq \cal E^{\otimes i+1}$ (modulo squares).
    \end{tabular}\right.
\end{equation*}
where the zeroes on the right come from the fact that for $i\geq 1$ we have that $H^i(\scr B\Gm, \KMW_{i-1},\bullet\otimes\cal U^{\otimes 2n})$ is a subgroup of $\KM_{-1}$: this last group is zero (see \Cref{prop:KMW cohomology BGm}).
Similarly we get 
\begin{equation*}
    \CHW^0(\Mbar_{1,1},\cal O)\simeq\GW(k)\cdot 1,\quad\CHW^0(\Mbar_{1,1},\cal E)\simeq\ZZ\cdot\HH, \quad \CHW^1(\Mbar_{1,1},\cal E)\simeq\ZZ\cdot \T.
\end{equation*}
We are left with identifying $\CHW^1(\Mbar_{1,1},\cal O)$. 
From the localization sequence we extract a short exact sequence
\[ 0\ra \ZZ\cdot\T\HH \ra \CHW^1(\Mbar_{1,1},\cal O)\overset{\partial}{\ra}\W(k)\ra 0.\]
We claim that this sequence splits. To prove this, observe that the Picard group of $\Mbar_{1,1}$ has no torsion. Therefore \Cref{prop:fundamental diagram} ensures that $ \CHW^1(\Mbar_{1,1},\cal O)$ fits in the pull-back square
\begin{equation}\label{eq:diagram}
\xymatrix{
\CHW^1(\Mbar_{1,1},\cal O)\ar[r] \ar[d] & \CH^1(\Mbar_{1,1}) \ar@{->>}[d] \\
H^1(\Mbar_{1,1},\I^1,\cal O) \ar[r]^{z} & \Ch^1(\Mbar_{1,1})
}
\end{equation}
where $z$ is the map induced on $H^1$ by the Milnor map $\I^1 \ra \Km_1$. We claim that $z=0$.
The exact sequence 
\begin{equation}\label{eq:ses} 0\ra \I^2 \ra \I^1 \ra \Km_1 \ra 0. \end{equation}
induces a commutative ladder in cohomology

\begin{equation}\label{eq:ladder} \xymatrix{
H^{1,1}(\scr V_{-4,-6},\I^\ast,\cal O)\ar[r] \ar[d]^{j^\ast} & H^{1,1}(\scr V_{-4,-6},\Km_\ast)\ar[r] \ar[d]^{j^*}_{\simeq} & H^{2,2}(\scr V_{-4,-6},\I^\ast,\cal O) \ar[d]^{j^*}_{\simeq} \\
H^{1,1}(\Mbar_{1,1},\I^\ast,\cal O)\ar[r]^{z} & H^{1,1}(\Mbar_{1,1},\Km_\ast)\ar[r] & H^{2,2}(\Mbar_{1,1},\I^\ast,\cal O).
} \end{equation}
Observe that the central and right vertical maps are isomorphisms. Indeed, injectivity follows from the vanishing of $e(\scr V_{-4,-6})$ in $\I^\ast$-cohomology (and thus also on mod two Chow groups by \ref{sub:can_loc_sequence_on_a_bundle}), while surjectivity follows from \Cref{prop:Ij cohomology BGm}. As $\scr V_{-4,-6}$ is a vector bundle over the smooth stack $\scr B\Gm$, the top-left term is isomorphic to $H^{1,1}(\scr B\bb G_m,\I^\ast,\cal O)$ and by \Cref{prop:Ij cohomology BGm} we know that this is zero. A diagram chase shows that this implies that both horizontal arrows on the right in \eqref{eq:ladder} are injective, hence $z=0$.

From \eqref{eq:diagram} we know that $\CHW^1(\Mbar_{1,1},\cal O) \simeq \Pb^1(\Mbar_{1,1},\cal O)$. As $z=0$, the latter by definition coincides with
\[ H^1(\Mbar_{1,1},\I^\ast,\cal O)\oplus\Ker\left(\CH^1(\Mbar_{1,1})\ra\Ch^1(\Mbar_{1,1})\right), \]
which is in turn isomorphic to $\W(k)\oplus 2\T\cdot \ZZ$. This proves the splitting.

Now let $\E''$ be an element that is sent to a generator of $\W(k)$ as $\W(k)$-module. Observe that by what we proved before the element $\E''$ must be sent to an element of $\Ker\left(\CH^1(\Mbar_{1,1})\ra\Ch^1(\Mbar_{1,1})\right)$ by the morphism induced by the rank. As $\CH^1(\Mbar_{1,1})\simeq \ZZ$, this implies that the rank of $\E''$ is even, say $2d$. Therefore, if we define $\E':=:\E''-dH$, we obtain an element having the properties listed in the statement, which by construction generates the Witt factor. We are left to show that any generator of the Witt summand will be mapped along the boundary
$\partial:\CHW^1(\Mbar_{1,1},\cal O)\ra H^{0,-1}(\scr B\Gm,\KMW_\ast,\cal O)\simeq \W(k)$ to a generator. For this we use the commutative square
\begin{equation*}
  \begin{tikzcd}
     \CHW^1(\Mbar_{1,1},\cal O) \arrow[r,"\partial"] \arrow[d] & H^{0,-1}(\scr B \bb G_m, \KMW_*,\cal O) \arrow[d,"\simeq"] \\
     H^{1,1}(\Mbar_{1,1},\I^*,\cal O) \arrow[r,"\partial","\simeq"'] & H^{0,-1}(\scr B \bb G_m, \I^*,\cal O)
  \end{tikzcd}
\end{equation*}
The right vertical map is an isomorphism since it fits in the long exact sequence associated to
\begin{equation*}
\begin{tikzcd}
  0 \arrow[r] & 2\KM_\ast \arrow[r] & \KMW_\ast \arrow[r] & \I^\ast \arrow[r] & 0.
\end{tikzcd}
\end{equation*}
\end{proof}

\subsection{The Chow-Witt groups of $\Mcal_{1,1}$}\label{subsec:CW groups Mcal}

\subsubsection{}
Recall from (\ref{subec:localization sequence Mcal}) that the stack $\Mcal_{1,1}$ can be regarded as an open substack of $\scr V_{-4,-6}$, whose complement is the quotient stack $\scr C:=[C/\Gm]$, where $C$ denotes the closed subscheme in $\bb V_{-4,-6}$ of equation $4a^3+27b^2=0$.

Observe that the normalization of $\scr C$ is isomorphic to $\scr V_{-2}$: indeed, the normalization of $C$ is isomorphic to $\bb V_{-2}$, and the proper morphism $\nu:\bb V_{-2}\ra C$ is $\Gm$-equivariant, hence it descends to a well defined morphism of quotient stacks.
\begin{lemma}\label{lemma:normalization C}
    The normalization $\nu:\scr V_{-2}\ra \scr C$ induces an isomorphism of homology groups
    \[ \nu_\ast:H_{i,j}(\scr V_{-2},\op K_\ast,\nu^*\Lcal) \xrightarrow{\simeq} H_{i,j}(\scr C,\op K_\ast,\Lcal) \]
    where $\op K_\ast \in \{\KMW_\ast,\I^\ast, \KM_\ast,\Km_\ast\}$.
\end{lemma}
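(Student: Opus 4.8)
The plan is to deduce the isomorphism from the localization sequences of \Cref{thm:equiv prop_verison2_stacks} together with the five lemma. Recall that $\nu\colon\bb V_{-2}\to C$ is finite, birational and $\Gm$-equivariant, that the cusp $\{0\}\subset C$ is $\Gm$-fixed, and that $\nu$ restricts to an isomorphism of $\Gm$-schemes $\bb V_{-2}\smallsetminus\{0\}\xrightarrow{\ \sim\ }C\smallsetminus\{0\}$, since $C$ is smooth away from the cusp and $\nu$ is its normalization. Passing to quotient stacks, $[\{0\}/\Gm]\simeq\scr B\Gm$ sits as a closed substack in $\scr V_{-2}$, where it is the image of the zero section $s_0$ and has open complement $[(\bb V_{-2}\smallsetminus\{0\})/\Gm]\simeq\scr B\mu_2$, and also in $\scr C$, via a closed immersion $\iota$ with open complement $[(C\smallsetminus\{0\})/\Gm]$. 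By construction $\nu\circ s_0=\iota$, so $\nu$ carries the closed substack $\scr B\Gm\subset\scr V_{-2}$ to the closed substack $\scr B\Gm\subset\scr C$ by the identity, carries the open complement of $s_0$ isomorphically onto the open complement of $\iota$, and is compatible with the twists, since $s_0^\ast\nu^\ast\Lcal=\iota^\ast\Lcal$.

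Since $\nu$ is finite, hence proper and representable, \Cref{thm:equiv prop_verison2_stacks} provides a pushforward $\nu_\ast$ on $\op K_\ast$-homology for every $\op K_\ast\in\{\KMW_\ast,\I^\ast,\KM_\ast,\Km_\ast\}$, and likewise for the restrictions of $\nu$ to the closed and to the open parts. I would then assemble these into a morphism between the two localization long exact sequences — the one attached to $\scr B\Gm\hookrightarrow\scr V_{-2}\hookleftarrow\scr V_{-2}\smallsetminus s_0(\scr B\Gm)$ and the one attached to $\scr B\Gm\hookrightarrow\scr C\hookleftarrow[(C\smallsetminus\{0\})/\Gm]$ — obtaining a commutative ladder in which every vertical map over a closed piece is the identity of $H_{i,j}(\scr B\Gm,\op K_\ast,\iota^\ast\Lcal)$ and every vertical map over an open piece is an isomorphism. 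The five lemma then shows that $\nu_\ast\colon H_{i,j}(\scr V_{-2},\op K_\ast,\nu^\ast\Lcal)\to H_{i,j}(\scr C,\op K_\ast,\Lcal)$ is an isomorphism in every bidegree.

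The point requiring care — and the main obstacle — is the commutativity of this ladder, i.e.\ the compatibility of proper pushforward with the connecting homomorphisms of the localization sequence; this is not covered by the functoriality recorded in \Cref{thm:equiv prop_verison2}, which concerns pullback along smooth maps or along Tor-independent regular embeddings, whereas $\nu$ is neither flat nor a regular embedding. I would instead check it directly on an equivariant scheme approximation $(V,W)$ of $\scr B\Gm$ of sufficiently high codimension, where the localization sequence is the long exact homology sequence of the termwise-split short exact sequence of Rost-Schmid complexes $0\to C_\bullet(Y)\to C_\bullet(X)\to C_\bullet(X\smallsetminus Y)\to 0$, and where the induced proper morphism $\bb V_{-2}\times_\Gm W\to C\times_\Gm W$, together with its restriction to the reduced preimage of $\{0\}$ and to the open complement, is a morphism of such short exact sequences of complexes (proper pushforward being functorial and commuting with open restriction). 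It therefore commutes with the connecting maps; transporting this back along the approximation, exactly as in \ref{subsub:ext_homology_to_eq_setting}, produces the ladder for quotient stacks. The identification of the vertical maps is then immediate: over the open locus $\nu$ is an isomorphism, and over the closed locus $\nu$ restricts to the identity of $\scr B\Gm$ (both $\{0\}\subset\bb V_{-2}$ and $\{0\}\subset C$ have reduced structure $\spec k$, and $\nu\circ s_0=\iota$), both compatibly with the twist $\iota^\ast\Lcal$.
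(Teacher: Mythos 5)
Your proof is correct and follows essentially the same route as the paper: restrict $\nu$ to the open complement ($\scr B\mu_2$, where it is an isomorphism) and to the closed substack ($\scr B\Gm$, where it is the identity), compare the two localization sequences, and apply the five lemma. The paper simply invokes "functoriality of localization sequences along proper maps" where you supply the verification on Rost--Schmid complexes of an equivariant approximation; that extra care is warranted, since Theorem \ref{thm:equiv prop_verison2} only records functoriality of the localization sequence for pullbacks.
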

\begin{proof}
    The restriction of $\nu$ to the open substack $\scr B\mu_2=[\bb V_{-2}\smallsetminus\{0\}/\Gm]$ is an isomorphism; the same holds for the restriction to the closed complement $\scr B\Gm=[\{0\}/\Gm]$. The conclusion then follows from functoriality of localization sequences along proper maps.
\end{proof}

The Picard group of $\Mcal_{1,1}$ is isomorphic to $\ZZ/12$ and it is generated by the Hodge line bundle $\cal E$. In fact the discriminant is a never-vanishing section of $\cal E^{\otimes 12}$, hence it canonically induces a map
\[ \Delta: \Mcal_{1,1} \ra \scr B\mu_{12}.\]
The map $\Delta$ can also be constructed as the quotient of the $\Gm$-equivariant morphism
\[ \Delta: \bb V_{-4,-6}\smallsetminus C \ra \bb V_{-12}\smallsetminus\{0\},\quad (a,b)\longmapsto 4a^3+27b^2. \]
As the map $\Delta$ is flat between smooth quotient stacks, there is a well defined pullback $\Delta^*$ at the level of Chow-Witt rings (see \Cref{thm:equiv prop_verison2_stacks}).

\begin{defin}\label{defin:class d}
    We define $\D$ in $\CHW^0(\Mcal_{1,1},\cal O,0)$ as $\Delta^*\U$, where $\U$ is the element of $\CHW^0(\scr B\mu_{12})$ introduced in \Cref{defin:class u}.
\end{defin}

\subsubsection{}\label{sub:new class Mcal}
If we denote by $\bf{GW}$ the unramified sheaf of Grothendieck-Witt rings on the category of smooth $k$-schemes, we can regard the element $\D$ as a map $\Mcal_{1,1}\ra \bf{GW}$ of Zariski stacks over smooth $k$-schemes associating with every elliptic curve $p:E\ra S$ the Grothendieck-Witt class of the pair $(\cal E_p, \Delta)$: here we see the discriminant $\Delta$ as a quadratic form $\cal E_p^{\otimes 6}\otimes \cal E_p^{\otimes 6} \ra \cal O$ on the sixth power of the Hodge bundle $\cal E_p$. From this point of view we actually have that $\D$ is the following composition of maps of Zariski stacks
\[\xymatrix{\Mcal_{1,1} \ar[r]^{\Delta} & \scr B\mu_{12} \ar[r]^{(-)^6} & \scr B\mu_2 \ar[r]^{\U} & \bf{GW}.}\]
\begin{prop}\label{thm:additive structure Mcal}
Suppose that the characteristic of the base field is $\neq 2,3$. Let $\T$ be the Euler class of $\cal E^{\vee}$, let $\HH$ be the (pull-back of the) class introduced in from \ref{sub:H} and let $\D$ be the class introduced in \Cref{defin:class d}. Then the following description of $\CHW^\ast(\Mcal_{1,1},\bullet)$ holds:

    \[\begin{array}{c@{\hskip 0.17in}c@{\hskip 0.17in}c@{\hskip 0.17in}c@{\hskip 0.17in}c@{\hskip 0.17in}c@{\hskip 0.17in}c}
      \toprule
      Twist  & 0 & 1 & 2 & 3 & 2k & 2k+1\\
      \midrule
       \cal O  & \GW(k)\cdot 1\oplus \W(k)\cdot \D& \bb Z/6\cdot \HH\T & \bb Z/24\cdot \T^2& \bb Z/6\cdot \HH\T^3 &\bb Z/24\cdot \T^{2k} & \bb Z/6\cdot \HH\T^{2k+1}\\
       \cal E & \bb Z\cdot\HH & \bb Z/24\cdot\T & \bb Z/6\cdot\HH\T^2 & \bb Z/24\cdot \T^3 & \bb Z/6\cdot\HH\T^{2k}& \bb Z/24\cdot\T^{2k+1}.\\
      \bottomrule
     \end{array}
     \]
\end{prop}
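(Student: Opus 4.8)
The plan is to mimic exactly the strategy used for $\Mbar_{1,1}$ in \Cref{thm:additive structure Mbar}, feeding in the presentation $\Mcal_{1,1}\simeq[\bb V_{-4,-6}\smallsetminus C/\Gm]$ and the localization sequence associated with the open-closed decomposition
\[
\scr C=[C/\Gm]\hookrightarrow\scr V_{-4,-6}\hookleftarrow\scr V_{-4,-6}\smallsetminus\scr C\simeq\Mcal_{1,1}.
\]
First I would use \Cref{lemma:normalization C} to replace $H_{\ast,\ast}(\scr C,\op K_\ast,-)$ by $H_{\ast,\ast}(\scr V_{-2},\op K_\ast,-)$, which by homotopy invariance (\Cref{lemma:homotopy_invariance}, or rather its stacky form) is just the cohomology of $\scr B\Gm$ shifted appropriately. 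So the localization sequence relating $\CHW^\ast(\scr V_{-4,-6},\bullet)\simeq\CHW^\ast(\scr B\Gm,\bullet)$ to $\CHW^\ast(\Mcal_{1,1},\bullet)$ has all three terms explicitly known from \Cref{prop:CW ring BGm} and \Cref{prop:KMW cohomology BGm}; the only input I need to compute is the relevant Gysin/pushforward map $\CHW^{\ast-?}(\scr C,\bullet)\to\CHW^\ast(\scr V_{-4,-6},\bullet)$, i.e.\ the class of $[\scr C]$ and, more importantly, the multiplication-by-$[\scr C]$ map on the whole ring.

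Second, I would pin down that Gysin map. The key numerical fact is that $\scr C$ is the vanishing locus of the discriminant, a section of $\cal E^{\otimes -12}$ pulled back from $\scr B\Gm$ — more precisely $\cal C$ corresponds to $\cal U^{\otimes 12}$, so that $[\scr C]$ in $\CHW^1(\scr B\Gm,\cal U^{\otimes 12})$ is the Euler class $e(\cal U^{\otimes -12})$. By the formulas in \ref{subsub:Eul_class_of_Un}, $e(\cal U^{\otimes -12})=6\,\T\HH$ (as $12=2\cdot 6$), in contrast with the $24\T^2$ that appeared for $\Mbar_{1,1}$. Pushing this through the localization sequence with the relations $\HH^2=2h$ and $(2^jh^j-4^j)\T=0$ from \ref{subsub:Eul_class_of_Un} produces, for $i\ge 2$, the presentations $\ZZ\cdot\T^{i-1}\HH\xrightarrow{6\T\HH}\ZZ\cdot\T^i\to\CHW^i(\Mcal_{1,1},\cal E^{\otimes i})\to 0$ (giving $\ZZ/24$ via $6\HH^2=24$) and $\ZZ\cdot\T^{i-1}\xrightarrow{6\T\HH}\ZZ\cdot\HH\T^i\to\CHW^i(\Mcal_{1,1},\cal E^{\otimes i+1})\to 0$ (giving $\ZZ/6$), together with the vanishing of the relevant $H^i(\scr B\Gm,\KMW_{i-1},-)$ which kills the boundary term as in the $\Mbar$-case. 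This accounts for all columns with $i\ge 1$ except the determination of the extension in twist $\cal O$, degree $i$ even vs odd — but here the pattern $\ZZ/24\cdot\T^{2k}$, $\ZZ/6\cdot\HH\T^{2k+1}$ falls out of the same two sequences according to parity.

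Third, the genuinely delicate part: the degree-$0$, twist-$\cal O$ group. As for $\Mbar$ and for $\scr B\mu_{2n}$, the localization sequence gives an extension $0\to\GW(k)\cdot 1\to\CHW^0(\Mcal_{1,1},\cal O)\to\W(k)\to 0$, and I must exhibit a splitting and identify the splitting class as $\D$. The section of this sequence is provided precisely by $\D=\Delta^\ast\U$ of \Cref{defin:class d}: since $\Delta^\ast$ is a ring map and, by \Cref{lemma:splitting CW0} applied on $\scr B\mu_{12}$, $\partial(\U)$ is a unit in $\W(k)$, one checks that $\partial(\D)$ is again a generator of $\W(k)$ (using that $\Delta$ is transverse to the discriminant divisor and that $\D$ is the pullback of the tautological generic quadratic form on $\cal E^{\otimes 6}$), so multiplication by $\partial(\D)^{-1}\D$ splits the extension; hence $\CHW^0(\Mcal_{1,1},\cal O)\simeq\GW(k)\cdot1\oplus\W(k)\cdot\D$. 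The remaining $i=0$ entries $\CHW^0(\Mcal_{1,1},\cal E)\simeq\ZZ\cdot\HH$ and the start of the $\cal E$-column are immediate from the sequence just as in \Cref{thm:additive structure Mbar}.

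I expect the main obstacle to be the careful bookkeeping of twists in the Gysin/localization sequence — making sure that $\cal C$ really corresponds to $\cal U^{\otimes 12}$ and that the Euler class $e(\cal U^{\otimes -12})$ is $6\T\HH$ rather than $12\T$ or some other value — together with verifying that the boundary term $H^i(\scr B\Gm,\KMW_{i-1},\bullet\otimes\cal U^{\otimes 12})$ vanishes for all $i\ge 1$ and all the relevant twists, so that the sequences really are right-exact as written. Checking that $\partial(\D)$ is a genuine generator of $\W(k)$ (rather than a nonzero non-unit, which $\W(k)$ being non-local makes conceivable) will need the explicit description of $\D$ via \Cref{lemma:splitting CW0} and the flatness/transversality of $\Delta$; this is the step where the geometry of $\Delta\colon\Mcal_{1,1}\to\scr B\mu_{12}$ really enters.
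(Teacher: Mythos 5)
Your proposal is correct and follows essentially the same route as the paper: the localization sequence for $\scr C\hookrightarrow\scr V_{-4,-6}\hookleftarrow\Mcal_{1,1}$, reduction of the homology of $\scr C$ to $\scr V_{-2}$ via \Cref{lemma:normalization C}, identification of the image of the Gysin map as the principal ideal generated by $[\scr C]=e(\cal U^{\otimes 12})=\pm 6\T\HH$, and splitting of the degree-zero extension by $\D$ after checking $h\D=0$ and that $\partial(\D)$ is a unit in $\W(k)$. The only cosmetic difference is that the paper verifies $\partial(\D)$ is a unit by an explicit residue computation of $\eta[\Delta^{-1}]$ on the approximation $\bb V(\cal O(-4)\oplus\cal O(-6))\to\bb P^2$, where you propose to deduce it from transversality of $\Delta$ and \Cref{lemma:splitting CW0}; both work.
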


\begin{proof}
We use the localization sequence associated to $\scr C \overset{i}{\hra} \scr V_{-4,-6} \overset{j}{\hla} \Mcal_{1,1} $  for the homology with coefficients in Milnor-Witt K-theory: 
\begin{equation}
\label{eq:long M11} 
\xymatrix{\CHW_{i}(\scr C,\bullet)\ar[r]^(0.45){i_\ast} & \CHW_i(\scr V_{-4,-6},\bullet) \ar[r]^{j^\ast} & \CHW_i(\Mcal_{1,1},\bullet)\ar[r] & H_{i-1,i}(\scr C,\KMW_\ast,\bullet) }
\end{equation}
From \Cref{lemma:normalization C}, combined with the usual homotopy invariance and with the computations contained in \Cref{prop:KMW cohomology BGm}, we know that $H_{i-1,i}(\scr C,\KMW_\ast,\bullet)=0$ for $i\neq 1$.
 
We are left with determining the image of $i_\ast$: we claim that $\rm{Im}(i_\ast)$ is the graded ideal generated by $e(\cal O(12))$. By \Cref{lemma:normalization C} we know that $\rm{Im}(i_\ast)$ coincides with the image of the puhforward along the composition
\[\scr V_{-2} \overset{\nu}{\ra} \scr C \overset{i}{\hra} \scr V_{-4,-6}.\]
 
Since $(i\circ \nu)^\ast$ is a surjective map of $\CHW^\ast(\scr B\Gm,\bullet)$-modules, the projection formula ensures that $\rm{Im}(i\circ \nu)_\ast=((i\circ \nu)_\ast(1))$. Moreover it is clear that $(i\circ \nu)_\ast(1)=[\scr C]$ where $[\scr{C}]$ denotes the fundamental class of the Cartier divisor $\scr C \subset \scr V_{-4,-6}$. It is known that
\[ [\scr C]=e(\cal O(\cal C))=e(\cal U^{\otimes 12})=-6\T\HH, \]
and this concludes the computation of the Chow-Witt groups of $\Mcal_{1,1}$ in cohomological degree $i>0$. Actually, the same argument also shows that $\CHW^0(\Mcal_{1,1},\cal E)=\ZZ\cdot\HH$.

In order to determine $\CHW^0(\Mcal_{1,1})$ we use again the localization sequence \eqref{eq:long M11}, from which we deduce the following short exact sequence:
\begin{equation}\label{eq:ses CW groups Mcal} 0\ra \GW(k)\cdot 1 \ra \CHW^0(\cal M_{1,1},\cal O) \ra \W(k) \ra 0. \end{equation}
By construction $h\cdot\D=\Delta^*h \cdot \Delta^*U=\Delta^*(h\cdot\U)=0$, hence the multiplication by $\D$ defines a map $ \W(k) \ra \CHW^0(\Mcal_{1,1})$. We claim that this map splits the sequence \eqref{eq:ses CW groups Mcal}: for this we need to show that the boundary of $\D$ in  $H_{0,1}(\scr C,\KMW_\ast,\cal U^{\otimes 12})\simeq \W(k)$
is a unit.

Consider the equivariant approximation of $\scr V_{-4,-6}\ra\scr B\Gm$ given by the vector bundle 
\[V:=\mathbb V(\cal O(-4)\oplus\cal O(-6))\ra\bb P^2.\]
An equivariant approximation of $\scr C$ is given by the divisor $C\subset V$ defined as the vanishing locus of the map
\[ \bb V \big (\cal O(-4)\oplus\cal O(-6)\big ) \ra \bb V \big (\cal O(-12)\big ),\quad (a,b) \longmapsto 4a^{\otimes 3} + 27b^{\otimes 2}.  \]
Therefore, the open subscheme $V\smallsetminus C$ is an equivariant approximation of $\Mcal_{1,1}$. The class $\D$ can then be regarded as the element of $\CHW^0(V\smallsetminus C,\cal O)$, or even more conveniently as an element of $\GW(k(V))$. Let $\cal L$ denote the pull-back of $\cal O(-6)$ to $V$, then we have the quadratic form
\[ q:\cal L \otimes \cal L \ra \cal O_V,\quad t\otimes t' \longmapsto \frac{t\otimes t'}{4a^{\otimes 3}+27b^{\otimes 2}}. \]
If we write $k(V)=k(x,y,a,b)$, then $q_{gen}\in \GW(k(V))$ corresponds to the symbol $1+\eta[\Delta]$, where $\Delta=4a^3+27b^2$.
By construction $\D=(q_{gen}-1)$, and its image along the boundary morphism
\[ \CHW_1 (V\smallsetminus C,\cal O)\ra H_{0,1}(C,\KMW_\ast,\cal O(C)) \]
is equal to the residue of this element at the codimension one point corresponding to the generic point of $C$. The discriminant $\Delta=4a^3+27b^2$ is a local parameter for the valuation $\nu:k(V)\ra \ZZ\cup\{\infty\}$ induced by the Cartier divisor $C\subset V$. Applying the formula for the residues (\cite[Theorem 1.7]{FasLectures}) we obtain
\[ \partial_\nu(\eta[\Delta^{-1}])=\partial_\nu^{\Delta}(\eta[\Delta^{-1}])\otimes \overline{\Delta}^{\vee}=\eta\cdot\partial_\nu [\Delta^{-1}] \otimes \overline{\Delta}^{\vee}= \eta\otimes \overline{\Delta}^{\vee}.\]
The computation above shows that, after identifying $H_{0,1}(C,\KMW_\ast,\cal O(C))$ with $\W(k)$, the boundary of $\D$ is a unit and hence the multiplication by this class splits \eqref{eq:ses CW groups Mcal}. This concludes the proof.

\end{proof}

\subsection{Multiplicative structures}\label{sec:multiplicative structures}
\subsubsection{}
Let us take a look at the localization sequence of $\KMW_\ast$-cohomology groups induced by the open immersion $\Mcal_{1,1}\hra\Mbar_{1,1}$. Recall that $\Delta_0$, the divisor of singular curves, is isomorphic to $\scr B\mu_2$, so that we have
\[ \CHW^0(\scr B\mu_2,\cal O)\ra \CHW^1(\Mbar_{1,1},\cal E^{-12}) \ra \CHW^1(\Mcal_{1,1}) \ra 0. \]
The surjectivity of the last map is due to the vanishing of $H^1(\scr B\mu_2,\KMW_0)$, which is in turn an easy consequence of the vanishing of $H^{1,j}(\scr B\Gm,\KMW_\ast)$ for $j=0,-1$.

We can rewrite the exact sequence above using the information we gathered so far on the Chow-Witt groups of these stacks (see \Cref{prop:CW groups Bmu2n}, \Cref{thm:additive structure Mbar} and \Cref{thm:additive structure Mcal}). We obtain:
\[ \GW(k)\cdot 1 \oplus \W(k)\cdot \U \ra \ZZ\cdot\T\HH\oplus \W(k)\cdot\E' \ra \ZZ/6\cdot \T\HH \ra 0.  \]
As $\cal O(\Delta_0)\simeq \cal E^{\otimes 12}$ we get
\begin{equation}\label{eq:6TH} i_*1=e(\cal O(\Delta_0))=e(\cal E^{\otimes 12})=-6\T\HH, \end{equation}
which implies that $i_*\U\neq 0$. Write $i_*\U=a\T\HH+b\E'$. The commutative square
\[\xymatrix{
\CHW^0(\scr B\mu_2)\ar[r]^{i_*} \ar[d]^{ch} & \CHW^1(\Mbar_{1,1}) \ar[d]^{ch} \\
\CH^0(\scr B\mu_2) \ar[r]^{i_*} & \CH^1(\Mbar_{1,1})\simeq\ZZ
} \]
tells us that $ch(i_*\U)=i_*(ch(\U))$, which is zero because of how we defined $\U$ (see \Cref{defin:class u}). Henceforth $i_*\U$ must be of the form $b\E'$. Consider now the commutative square 
\[\xymatrix{
\CHW^1(\Mbar_{1,1})\ar[r]^{j^*\quad\quad} \ar[d]^{ch} & \CHW^1(\Mcal_{1,1})\simeq\ZZ/6\cdot\T\HH \ar[d]^{ch} \\
\CH^1(\Mbar_{1,1}) \ar[r]^{j^*\quad\quad} & \CH^1(\Mcal_{1,1})\simeq\ZZ/12\cdot \T.
} \]
Observe that the right vertical arrow sends $\T\HH$ to $2\T$, hence the map is injective. As $\E'$ is by definition sent to zero by the left vertical arrow, we deduce that $\E'$ is sent to zero by the top horizontal arrow, i.e. $\W(k)\cdot \E'$ is contained in the kernel of $ \ZZ\cdot\T\HH\oplus \W(k)\cdot\E' \ra \ZZ/6\cdot \T\HH$.

On the other hand, this kernel coincides with the image of  $\GW(k)\cdot 1 \oplus \W(k)\cdot \U \ra \ZZ\cdot\T\HH\oplus \W(k)\cdot\E'$, which is generated by $i_*(1)=6\T\HH$ and $i_*(U)=b\E'$. This implies that $b$ must be invertible in $\W(k)$, as the $\GW(k)$-module generated by $b\E'$ must be equal to the whole $\W(k)\cdot\E'$.
\begin{defin}\label{defin:class E}
    We denote by $\E$ the class $i_*\U$.
\end{defin}
We are ready to state our first main result.
\begin{thm}\label{thm:CW ring of Mbar}
Let $k$ be a perfect field of characteristic different from $2$ and $3$. Then we have an isomorphism of $\GW(k)$-algebras
\[ \CHW^*(\Mbar_{1,1},\bullet)\simeq\GW(k)[\T,\E,\HH]/(I\cdot\T,I\cdot\HH,\HH^2-2h,h\E,\HH\E,\E^2,24\T^2,12\HH\T^2+\E\T), \]
where $\T$ is mapped to the Euler class of $\cal E^{\vee}$, the element $\HH$ is mapped to the (pull-back of the) element introduced in \ref{sub:H}, and $\E$ is mapped to the class introduced in \Cref{defin:class E}.
\end{thm}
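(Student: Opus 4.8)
The plan is to use the standard three-step template for a presentation of this kind: (i) exhibit $\T$, $\HH$, $\E$ as $\GW(k)$-algebra generators of $\CHW^*(\Mbar_{1,1},\bullet)$; (ii) verify the eight listed relations, which produces a surjection $R\twoheadrightarrow\CHW^*(\Mbar_{1,1},\bullet)$ from the presented algebra $R$; and (iii) compare $R$ with the target bidegree by bidegree to force the surjection to be an isomorphism. The two inputs that make this feasible are the additive computation of \Cref{thm:additive structure Mbar} and the ring structure of $\scr B\mu_2$ from \Cref{thm:CW ring of Bmu2n}. For (i), every entry of the table in \Cref{thm:additive structure Mbar} is a $\GW(k)$-multiple of a monomial in $\T$, $\HH$, $\E'$, and since $\E=i_\ast\U$ agrees with $\E'$ up to a unit of $\W(k)$ with $h\E'=0$ (as observed before \Cref{defin:class E}), the classes $\T,\HH,\E$ generate. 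Among the relations, $I\cdot\T$, $I\cdot\HH$ and $\HH^2-2h$ are pulled back along the Hodge map $\pi\colon\Mbar_{1,1}\to\scr B\Gm$ (recall $\cal E=\pi^\ast\cal U$) and hold by \Cref{prop:CW ring BGm}, while $24\T^2=0$ is already recorded in \Cref{thm:additive structure Mbar}.

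The four remaining relations involve $\E=i_\ast\U$, where $i\colon\Delta_0\cong\scr B\mu_2\hookrightarrow\Mbar_{1,1}$ is the boundary divisor. I would deduce them from the projection formula of \Cref{thm:equiv prop_verison2}, after noting that $\pi\circ i$ is the standard map $\scr B\mu_2\to\scr B\Gm$ (the boundary stack is $[\Delta\smallsetminus 0/\Gm]$ with $\Delta\smallsetminus 0$ a single $\Gm$-orbit, so $\cal E$ restricts to the universal line bundle on $\scr B\mu_2$); hence $i^\ast\T$ and $i^\ast\HH$ are the classes $\T,\HH$ on $\scr B\mu_2$, and $i^\ast\E$ lies in $\CHW^1(\scr B\mu_2,\cal O)=0$ by \Cref{prop:CW groups Bmu2n}. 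Then
\[
h\E=i_\ast(h\U)=0,\qquad \HH\E=i_\ast(i^\ast\HH\cdot\U)=i_\ast(\HH\U)=0,\qquad \E^2=i_\ast(i^\ast\E\cdot\U)=0,
\]
using $h\U=0$ (\Cref{lemma:splitting CW0}) and $\HH\U=0$ (\Cref{thm:CW ring of Bmu2n}); and using $\U\T=2\T$ on $\scr B\mu_2$ (\Cref{thm:CW ring of Bmu2n}) together with $i_\ast(1)=e(\cal E^{\otimes 12})=-6\T\HH$ from \eqref{eq:6TH},
\[
\E\T=i_\ast(\U\cdot i^\ast\T)=i_\ast(\U\T)=i_\ast(2\T)=2\,\T\cdot i_\ast(1)=-12\HH\T^2,
\]
so that $12\HH\T^2+\E\T=0$. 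This gives the surjection $R\twoheadrightarrow\CHW^*(\Mbar_{1,1},\bullet)$.

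For step (iii) I would argue as follows. Using $\HH^2=2h$, the relations $h\E=\HH\E=\E^2=0$, and $\E\T=-12\HH\T^2$, every monomial of $R$ reduces to a $\GW(k)$-multiple of $1$, $\HH$, $\E$, $\T^k$ or $\HH\T^k$; the relations $I\cdot\T=I\cdot\HH=0$, $h\E=0$ and $24\T^2=0$ then bound the coefficient group in each case ($\GW(k)$ in bidegree $0$; $\bb Z$ on $\HH$, $\T$ and $\HH\T$; $\W(k)$ on $\E$; $\bb Z/24$ on $\T^k$, $\HH\T^k$ for $k\ge 2$). So each bidegree of $R$ is a quotient of the corresponding group listed in \Cref{thm:additive structure Mbar}; all bidegrees but $\CHW^1(\Mbar_{1,1},\cal O)\cong\bb Z\cdot\HH\T\oplus\W(k)\cdot\E$ are cyclic $\GW(k)$-modules, where a quotient surjecting onto a module with the same generator and the same annihilator is forced to be an isomorphism, and in the one non-cyclic bidegree the identification is tautological from the way $\E$ was defined. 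Hence $R\to\CHW^*(\Mbar_{1,1},\bullet)$ is an isomorphism.

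I expect the delicate point to be the relation $12\HH\T^2+\E\T=0$: one must track the twisting line bundles (hence the signs) carefully through the projection formula, and must pin down the restrictions $i^\ast\T$, $i^\ast\HH$, i.e. check that $\Delta_0\to\scr B\Gm$ is the standard classifying map and that $\cal E$ restricts to the universal line bundle on $\scr B\mu_2$. Once this, the ring of $\scr B\mu_2$, and the additive structure of \Cref{thm:additive structure Mbar} are in hand, the remaining $\E$-relations and the bidegree count are routine.
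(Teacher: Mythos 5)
Your proposal is correct and follows essentially the same route as the paper: the relations among $\T$ and $\HH$ are pulled back from $\scr B\Gm$, the $\E$-relations are obtained from the projection formula applied to $\E=i_\ast\U$ using the ring structure of $\scr B\mu_2$ and $i_\ast(1)=-6\T\HH$, and the presentation is then confirmed by comparison with the additive structure of \Cref{thm:additive structure Mbar}. The only (harmless) deviation is your proof of $\E^2=0$ via $i^\ast\E\in\CHW^1(\scr B\mu_2,\cal O)=0$, where the paper instead notes that $\CHW^2(\Mbar_{1,1},\cal O)\to\CH^2(\Mbar_{1,1})$ is an isomorphism annihilating the image of $\E^2$; both arguments are valid.
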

\begin{proof}
The relations that only involve $\T$ and $\HH$ comes from the Chow-Witt ring of $\scr B\Gm$ (see \Cref{prop:CW ring BGm}). The relation $24\T^2=0$ has already been discussed in the proof of \Cref{thm:additive structure Mbar}. 

By definition $\E=i_*\U$, and we know from \Cref{thm:CW ring of Bmu2n} that $\U^2=0$, $h\U=0$ and $\U\T=2\T$. The push-forward morphism $i_*$ is a morphism of $\CHW^*(\scr B\Gm,\bullet)$-modules and we have already verified in (\ref{eq:6TH}) that $i_*(1)=6\T\HH$, thus by the projection formula (cf. Theorem 3.19 of \cite{FasLectures}) we obtain
\begin{align*}
\E\T=i_*(\U)\cdot\T=i_*(\U\cdot i^*\T)=i_*(\U\T)=i_*(2\T)=2\T\cdot i_*(1)=-12\T^2\HH
\end{align*}
In a similar way we deduce that $h\E=0$ and $\HH\E=0$. To prove that $\E^2=0$, is enough to observe that by construction the image of $\E^2$ in $\CH^2(\Mbar_{1,1})$ is zero. On the other hand the group $\CHW^2(\Mbar_{1,1})$ is generated by $\T^2$ and the morphism $\CHW^2(\Mbar_{1,1})\ra \CH^2(\Mbar_{1,1})$ is an isomorphism (see \Cref{thm:additive structure Mbar}), hence $\E^2=0$. 

The $\GW(k)$-algebra generated by $\T$,$\HH$ and $\E$ modulo the relations that we have found maps by construction onto the Chow-Witt ring of $\Mbar_{1,1}$. By looking at the additive structure of these two rings, we deduce that this map must be an isomorphism, thus concluding the proof.
\end{proof}

\begin{thm}\label{thm:CW ring of Mcal}
Suppose that the characteristic of the base field is $\neq 2,3$. Then we have an isomorphism of $\GW(k)$-algebras
\[ \CHW^*(\Mcal_{1,1},\bullet)\simeq\GW(k)[\T,\D,\HH]/(I\cdot\T,I\cdot\HH,\HH^2-2h,h\D,\HH\D,\D^2+2\D,6\T\HH,12\T-\D\T), \]
where $\T$ is the Euler class of $\cal E^{\vee}$, the element $\HH$ is the pull-back of the class introduced in \ref{sub:H}, and $\D$ is the class introduced in \Cref{defin:class d}.
\end{thm}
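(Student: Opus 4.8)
The plan is to follow the same template as the proof of \Cref{thm:CW ring of Mbar}: first check that all the relations in the displayed ideal hold in $\CHW^\ast(\Mcal_{1,1},\bullet)$, obtaining a surjection of $\GW(k)$-algebras onto $\CHW^\ast(\Mcal_{1,1},\bullet)$, and then promote it to an isomorphism by comparing its graded pieces with the additive description of \Cref{thm:additive structure Mcal}.

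The relations are extracted from the flat map $\Delta\colon\Mcal_{1,1}\to\scr B\mu_{12}$ of \Cref{defin:class d}, whose pull-back $\Delta^\ast$ is a homomorphism of bigraded twisted cohomology rings by \Cref{thm:equiv prop_verison2_stacks}. The single geometric input is the twist identification $\Delta^\ast\cal U\simeq\cal E$: indeed the composite of $\Delta$ with the projection $\scr B\mu_{12}\simeq\scr V_{-12}\smallsetminus s_0\to\scr B\Gm$ is precisely the map $\Mcal_{1,1}\to\scr B\Gm$ classifying the Hodge bundle, equivalently $\Delta$ corresponds to the pair $(\cal E,\mathrm{disc}\colon\cal E^{\otimes 12}\xrightarrow{\sim}\cal O)$ under the identification $\scr B\mu_{12}\simeq\scr F_{12}$ of \ref{sub:line_bundles_on_BGm_and_their_torsors}. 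Using compatibility of Euler classes and of the ``$(0,2)$''-class (via the sequence of \Cref{prop:fundamental diagram}) with pull-back, $\Delta^\ast$ then carries the generators $\T=e(\cal U^\vee)$, $\HH$, $\U$ of $\CHW^\ast(\scr B\mu_{12},\bullet)$ (\Cref{thm:CW ring of Bmu2n} with $n=6$) to the classes $\T=e(\cal E^\vee)$, $\HH$, $\D$ of $\CHW^\ast(\Mcal_{1,1},\bullet)$ respectively. Pulling back the relations $h\U$, $\HH\U$, $\U^2+2\U$, $6\T\HH$ and $\T\U-12\T$ of $\scr B\mu_{12}$ yields $h\D=0$, $\HH\D=0$, $\D^2+2\D=0$, $6\T\HH=0$ and $\D\T=12\T$ (i.e.\ the relation $12\T-\D\T$) on $\Mcal_{1,1}$, while $I\cdot\T$, $I\cdot\HH$ and $\HH^2-2h$ already hold on $\scr B\Gm$ (\Cref{prop:CW ring BGm}) and pull back along the Hodge map. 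I would also record that combining $h\D=0$ with $h\T=2\T$ (a consequence of $I\cdot\T=0$) and $\D\T=12\T$ forces $24\T=0$; this is the relation controlling torsion in positive cohomological degree.

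Given the surjection $\GW(k)[\T,\D,\HH]/J\twoheadrightarrow\CHW^\ast(\Mcal_{1,1},\bullet)$ — with $J$ the displayed ideal, surjectivity being clear since \Cref{thm:additive structure Mcal} presents every group as generated by a monomial in $\T,\HH,\D$ — the remaining step is a routine computation inside the source ring. Using $\HH^2=2h$, $\HH\D=0$, $h\D=0$, $\D^2=-2\D$, $\D\T=12\T$, $24\T=0$ and $6\T\HH=0$, one reduces an arbitrary monomial modulo $J$ to a $\GW(k)$-, $\W(k)$- or $\ZZ$-multiple of one of $1$, $\D$, $\HH$, $\T^{2k}$, $\HH\T^{2k+1}$, $\T^{2k+1}$, $\HH\T^{2k}$, and then checks bidegree by bidegree that the graded piece of the source coincides with the group listed in \Cref{thm:additive structure Mcal}. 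In each bidegree this produces a surjection between isomorphic finitely generated abelian groups, hence an isomorphism.

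I do not expect a genuine obstacle: the multiplicative structure is essentially forced by that of $\scr B\mu_{12}$, exactly as \Cref{thm:CW ring of Mbar} was governed by $\scr B\mu_2$. The only point requiring some care is the twist bookkeeping in the identification $\Delta^\ast\cal U\simeq\cal E$ — one must verify that $\Delta^\ast\HH$ and $\Delta^\ast\U=\D$ land in the advertised graded line bundles, so that pulling back relations gives relations in the correct bidegree. Once that is settled, both the relation-checking and the final counting are mechanical and entirely parallel to the arguments already used for $\scr B\mu_{2n}$ and for $\Mbar_{1,1}$.
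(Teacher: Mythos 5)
Your proposal is correct and takes essentially the same route as the paper: the paper's proof simply observes that the discriminant map $\Delta\colon\Mcal_{1,1}\to\scr B\mu_{12}$ induces a pull-back of Chow--Witt rings which, by comparison of the additive structures in \Cref{prop:CW groups Bmu2n} (with $n=6$) and \Cref{thm:additive structure Mcal}, is an isomorphism, so the presentation of \Cref{thm:CW ring of Bmu2n} transfers verbatim with $\U$ renamed $\D$. Your more explicit version — pulling back each relation and then matching graded pieces — is just an unpacking of that one-line argument.
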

\begin{proof}
The discriminant morphism $\Mcal_{1,1}\ra\scr B\mu_{12}$ induces a pull-back morphism at the level of Chow-Witt rings. By looking at the additive structure of both rings, which we know from \Cref{thm:CW ring of Bmu2n} and \Cref{thm:additive structure Mcal}, we can conclude that the pull-back morphism is actually an isomorphism of rings. This concludes the proof.
\end{proof}

\subsection{Geometric interpretation of the new classes}
\label{subsec:inter}
\subsubsection{}
Recall that for a smooth $k$-variety $X$ and a line bundle $\cal L$ on $X$, the Chow-Witt group $\CHW^i(X,\cal L)$ is a subquotient of
\[\bigoplus_{x\in X^{(i)}} \GW(k(x),\det(\fk m_x/\fk m_x^2)^\vee\otimes_{k(x)} \cal L),\]
where $\fk m_x/\fk m_x^2$ denotes the cotangent space of $X$ at $x$. Each element of $\CHW^i(X,\cal L)$ thus writes as a linear combination \[\sum_{x\in X^{(i)}} q_x\otimes l_x [x],\] where $q_x$ is a class in $\GW(k(x))$, $l_x$ is a local generator of $\det (\fk m_x/\fk m_x^2)^\vee \otimes_{k(x)}\cal L$. Sometimes, with abuse, we index the sum on integral subvarieties of $X$.

\subsubsection{}
\label{subsub:torsor_of_nodes}
Observe that $\E \in \CHW^1(\Mbar_{1,1},\cal O)$ belongs to, and in fact generates multiplicatively, the kernel of the natural morphism
\[\CHW^\ast(\Mbar_{1,1},\bullet) \ra \CH^\ast(\Mbar_{1,1})\]
which, we remind, sends an element $\sum_i q_i\otimes l_i [V_i]$ to $\sum_i {\rm{rk}}(q_i)[V_i]$.

The construction of the element $\E$ is somehow indirect. We have first identified $\phi: \Delta_0\overset{\simeq}{\ra} \scr B\mu_2$ to get a class in $\CHW^0(\Delta_0,\cal O)$ corresponding to $\U$, and in a second step we have used the composition of the push-forward map with a non-canonical identification $g$
\begin{equation*}
    \xymatrix{
    \CHW^0(\Delta_0,\cal O) \ar[r]_(0.4){\iota_\ast} &  \CHW^1(\Mbar_{1,1},\cal E^{\otimes-12}) \ar[r]^{\simeq}_{g} & \CHW^1(\Mbar_{1,1},\cal O),\\
    }
\end{equation*}
where $\iota: \Delta_0 \hra \Mbar_{1,1}$ is the obvious closed embedding, and where $g$ corresponds to a family of choices of local generators $g_x$ for $\cal E^{\otimes -6}$. The isomorphism $\phi$ is actually the composition 
\[\xymatrix{\Delta_0   \ar[r]^(0.4){\simeq}_(0.4)\psi &  \scr V_{-2}\setminus s_0 \ar[r]^{\simeq}_{\phi_2}&  \scr B\mu_2:}\]
here $\psi$ is induced by the equivariant isomorphism
\[\spec k[a,b]/(4a^3+27b^2)\setminus \{(0,0)\} \ra \spec k[t]\setminus \{0\} \; \textrm{ defined by }\; t \mapsto b/a\]
(where $\Gm$ acts on $a, b, t$ with weights $-4,-6,-2$ respectively), while $\phi_2$ is the tautological isomorphism introduced in \ref{sub:line_bundles_on_BGm_and_their_torsors}. In this picture $\cal E_{|\Delta_0}$ is easily seen to be induced by the pull-back of the universal line bundle $\cal U$ on $\scr B\mu_2$ via $\phi$. In particular, since the coordinate $t$ is the canonical trivialization of the pull-back of $\phi_2^\ast\cal U^{\otimes 2}$, the section $b/a$ trivializes $\phi^\ast \cal U^{\otimes 2}$, and thus induces a trivialization 
\[b/a:\cal O \overset{\simeq}{\ra} \cal E_{|\Delta_0}^{\otimes 2}.\]
This induces a quadratic form on $\cal E_{|\Delta_0}$, and thus an element $q=\phi^\ast(q_{gen})\in \CHW^0(\Delta_0,\cal O)$. By construction, the element $\E$ coincides with $(g\circ \iota_\ast)(q-1)=g\big (\kc{b/a}-1)[\Delta_0]\big )$.

More concretely we can think of the element $\E$ as a sort of characteristic class, functorially associated with families $p: C\ra S$ of genus $1$ curves over smooth bases, marked with a section $\sigma: S\ra C$ (landing in the smooth locus of $p$), where the non-smoothness locus of $p$ is a smooth Cartier divisor $S_\infty\subseteq S$. For instance, assume $S$ to be a smooth curve and let $f:S\ra \Mbar_{1,1}$ be the classifying map of the family $p$. Then $S_\infty=s_1+\dots+s_m$ is a reduced Cartier divisor arising as pull-back of $\Delta_0$, and the induced map $f_\infty: S_\infty \ra \Delta_0$ classifies the induced family of nodal curves $p_\infty: C_\infty \ra S_\infty$. In this setting
\begin{equation}\label{eq:class_of_nodal_locus}
  f_\infty^\ast\U\in \CHW^0(S_\infty,\cal O)\simeq \bigoplus _i \GW(k(s_i))
\end{equation}
and the above arguments tautologically give 
\[f_\infty^\ast\U=\sum_i \left(\left\langle \frac{b_i}{a_i}\right\rangle - 1\right):\]
here $a_i,b_i$ are coefficients of any Weierstrass equation $y^2=x^3+a_ix+b_i$ for the fiber $C_{s_i}$. As a sanity check we note that another Weierstrass equation for $C_{s_i}$ would change the ratio $b_i/a_i\in k(s_i)^\times$ by a square, leaving the cycle $f_\infty^\ast\U$ unchanged. Observe that $f_\infty^\ast\U$ is nothing else than the "new" characteristic class of the $\mu_2$-torsor
\begin{equation}\label{eq:torsor_of_a_nodal_curve}
\underline{\rm{Aut}}_{S_\infty}(\cal E_{p|S_\infty},f_\infty^\ast(b/a))
\end{equation} 
of automorphisms of the Hodge bundle $\cal E_{p_\infty}(\simeq \cal E_{p|S_\infty})$ of $p_\infty$ respecting the induced trivialization $f_\infty^\ast b/a$ of its tensor square. 

A side note: nodal curves are completely classified by their $\mu_2$-torsor \eqref{eq:torsor_of_a_nodal_curve}; $\mu_2$-torsors over a smooth base are, on their turn, completely classified by their discriminant $\U$ in the $\cal O$-twisted Chow-Witt ring of their base. Thus the invariant $\U$ is a full invariant of families of nodal curves.

Going back to the main topic we see that, by base change (cf. Theorem 3.18 of \cite{FasLectures}), formula \eqref{eq:class_of_nodal_locus} gives that $f^*\E\in \CHW^1(S,\cal E_{p}^{\otimes -12})$ can be expressed as
  \[f^*\E = \sum_i \left(\left\langle \frac{b_i}{a_i}\right\rangle - 1\right)[s_i],\]
where $\cal E_p$ denotes the Hodge bundle of the family $p$.

\subsubsection{}
\label{subsub:torsors_of_tangents}
We can relate the element $f^*\E$ to the geometry of the tangent lines around the nodes of the singular fibres as follows. Let $\nu_i:\overline{C}_{s_i}\ra C_{s_i}$ be the normalization map, and let $P_i$ be the fiber of $\nu_i$ over the singular point of $C_i$. It is easy to check that $P_i\ra s_i$ is an \'{e}tale cover (of degree $2$) isomorphic to $k(s_i)(\sqrt{b_i/2a_i})$: this follows from writing down explicitly the equation for the normalized curve. In other words, the tangent lines at the node of $C_{s_i}$ are rational if and only if $b_i/2a_i$ is a square in $k(s_i)$.

We can further reformulate of this phenomenon via a chain of natural isomorphisms of $\mu_2$-torsors over $s_i$ hereby displayed:
\begin{equation}
    \label{eq:torsor_of_tangents}
    \xymatrix{P_i   & \underline{\rm{Isom}}_{s_i}\left(\{0,1\}, P_i\right)  \ar[l]_(0.7)\simeq  & \underline{\rm{Isom}}_{s_i}\big( (\bb P^1_{s_i},\{0,1\},\infty),(\overline{C}_{s_i}, P_i, \sigma(s_i)\big). \ar[l]_(0.65)\simeq}
\end{equation}
The source of the left map is the the $\mu_2$-torsor of isomorphisms of $\mu_2$-torsors over $s_i$, and isomorphically maps to $P_i$ by evaluating at $0$. The source of the right map is the $\mu_2$-torsor of isomorphisms of curves marked with a Cartier divisor of degree two and with a Cartier divisor of degree one; the right hand side map is simply the restriction to the divisor of degree two. The ratio $b_i/2a_i$ being a square is thus equivalent to the triviality of the $\mu_2$-torsor 
\[\underline{\rm{Isom}}_{s_i}\big( (\bb P^1_{s_i},\{0,1\},\infty),(\overline{C}_{s_i}, P_i, \sigma(s_i)\big)\ra s_i.\]
Since we further have
\[\underline{\rm{Isom}}_{s_i}\big( (\bb P^1_{s_i},\{0,1\},\infty),(\overline{C}_{s_i}, P_i, \sigma(s_i)\big)\simeq \underline{\rm{Isom}}_{s_i}\big( (\bb P^1_{s_i}/0\sim 1, 2\{0\sim 1\},\infty),(C_{s_i}, P_i, \sigma(s_i)\big),\]
the invariant $\langle b_i/2a_i \rangle$ checks whether the curve $C_{s_i}$ is isomorphic (over $\spec (k(s_i))$) to a projective line with $0$ and $1$ glued together or to a quadratic twist of it. Obviously when $2$ is a square in $k$, the torsors \eqref{eq:torsor_of_a_nodal_curve} and \eqref{eq:torsor_of_tangents} are isomorphic, thus they have the same associated Chow-Witt class in $\CHW^1(S,\cal E^{\otimes -12})$. In particular, we have
        \[ \langle 2 \rangle f^*\E= \sum_i \left(\left\langle \frac{b_i}{2a_i} \right\rangle - \left\langle 2 \right\rangle \right) [s_i]. \]
When the ground field has a root of $2$, the expression above is equal to $f^*\E$.

\appendix
  \section{A gentle introduction to Chow-Witt theory}\label{sec:poor}
The goal of this Section is to give an elementary motivation and guide to Chow-Witt groups to a reader who is acquainted with classical intersection theory but not with the machinery of motivic cohomologies. Therefore, in this Section we sketch an intuitive construction of Chow-Witt groups that resembles the one of Chow groups. A rigorous fully detailed treatment of foundations is given in \cite{FasLectures} and in cite \cite{FGCW}. 

We have selected a few key facts without claiming or hoping to be exhaustive. The interested reader should refer to \cite{FasLectures} for further explanations and proofs.

\subsection{Recap on Chow groups}
\subsubsection{} The standard reference for the definition of Chow groups is \cite[Chapter 1]{Ful}.
Given a separated scheme $X$ over a field $k$, the group of cycles on $X$ of dimension $i$ is defined as
\[ Z_i(X):=\langle [V] | V\subset X \text{ is a closed subvariety of dimension }i\rangle. \]
Therefore, a cycle of dimension $i$ on $X$ is a finite formal sum $\sum_j n_j[V_j]$, where each $V_j$ is a subvariety of dimension $i$, and $n_j$ is an integer.

Given a normal subvariety $W\subset X$ and a rational function $\varphi:W\ra\bb P^1$ on $W$, one can define a cycle in $X$ as follows:
\[\rm{div}(\varphi):=\varphi^{-1}(0) - \varphi^{-1}(\infty). \]
When $W$ is not normal, more carefulness is needed and one should rather work with the normalization of $W$. For more details, see \cite[Chapter 1]{Ful}.

The definition of $\rm{div}(\varphi)$ above is necessary in order to introduce the notion of rational equivalence of cycles.
Indeed, two cycles $[V]$ and $[V']$ of dimension $i$ are called rationally equivalent if there exist a finite number of subvarieties $W_1,\dots,W_m \subset X$ of dimension $i+1$ and rational functions $\varphi_1,\dots,\varphi_m$ with $\varphi_i\in k(W_i)$ such that
\[ [V]-[V'] = \sum_j \rm{div}(\varphi_j). \]
The intuition behind the notion of rational equivalence is that two cycles are rationally equivalent if one can be deformed into the other along a chain of projective lines.

The Chow group $\CH_i(X)$ of cycles of dimension $i$ is then defined as
\[ \CH_i(X):=Z_i(X)/\sim_{rat}. \]

\subsubsection{}
The Chow group $\CH_i(X)$ sits in the exact sequence
\begin{equation}\label{eq:ex seq for chow} \bigoplus_{W\in X_{(i+1)}} k(W)^* \ra \bigoplus_{V \in X_{(i)}} \ZZ\cdot [V] \ra \CH_i(X) \ra 0, \end{equation}
where the first arrow sends rational functions $\varphi$ to $\rm{div}(\varphi)$. This is just a reformulation of the construction presented in the previous paragraph.

Observe that we can further reformulate the exact sequence above in terms of Milnor K-theory: recall that for a field $F$ we have
$\KM_0(F)=\ZZ$ and $\KM_1(F)=F^*$, and therefore we can rewrite (\ref{eq:ex seq for chow}) as
\[ \bigoplus_{W\in X_{(i+1)}} \KM_1(k(W)) \ra \bigoplus_{V \in X_{(i)}} \KM_0(k(V)) \ra \CH_i(X) \ra 0,  \]
where the first arrow is the total residue homomorphism. This way of looking at Chow groups is the starting point to get to the definition of Chow-Witt groups; more on that later.

When $X$ is smooth, Chow groups inherit a multiplicative structure given by the intersection product. In particular, such multiplicative structure is well behaved with respect to the natural multiplicative structure of $\KM_0(F)\simeq\ZZ$, i.e. we have
\[ n[V] \cdot m[V] = nm([V]\cdot [V']). \]
An analogue statement will be true for Chow-Witt groups, as we will see in the next Subsection.
\subsection{Intuition for Chow-Witt groups}
\subsubsection{}
A nice reference for the contents of this subsection is \cite[Section 1]{FasLectures}.
We have just seen that the generators of classical Chow groups are cycles $\sum_j n_j[V_j]$, where the $n_j$ are \emph{integers} and the $V_j$ are subvarieties of $X$. The idea behind Chow-Witt groups is to consider cycles $\sum_j q_j[V_j]$ where we would like the coefficients $q_j$ to be \emph{quadratic forms}. 

We should also be able to subtract and multiply two coefficients together, just as in the classical case. It would also be useful to have an extension of the rank function $q\mapsto\rm{rk}(q)$ to these groups of cycles, so to be able to get back to the classical cycles by considering
\[ \sum_j q_j[V_j] \longmapsto \sum_j \rm{rk}(q_j)[V_j]. \]
Given a cycle $q[V]$, an obvious choice for field of definition of the quadratic form $q$ is given by the field $k(V)$. Quite unsurprisingly at this point, the ring of quadratic forms we are looking for turns out to be the Grothendieck-Witt ring $\GW(k(V))$. Recall indeed that given a field $F$, we can construct a monoid of isometry classes of symmetric bilinear spaces $(V,q)$,  i.e. pairs where $V$ is a finite dimensional $F$-vector space and $q$ is a symmetric bilinear form on $V$. Operations are defined to be orthogonal direct sum and tensor product. The Grothendieck-Witt ring $\GW(F)$ is then the group completion of this monoid.

One would be tempted to define the Chow-Witt group of $i$-dimensional cycles on a variety $X$ as a suitable quotient of
\[ \bigoplus_{V\in X_{(i)}} \GW(k(V)). \]
However for several reasons this is a bit naive: for instance this definition does not allow to canonically define an analogue of rational equivalence, as it will be clear soon. This group has to be slightly modified in order to give a reasonable theory. 

\subsubsection{}
Another strong evidence that the Grothendieck-Witt ring is the right object to consider is given by the fact that $\GW(F)$ is isomorphic to the degree $0$ piece of the so called Milnor-Witt K-theory of $F$. The Milnor-Witt K-theory graded ring $\KMW_\ast(F)$ is generated as a graded ring by the degree $1$ symbols $[a]$, where $a\in F^{*}$, and the degree $-1$ symbol $\eta$. The relations are the following:
\begin{itemize}
    \item $[a]\cdot [1-a]=0$ for $a\neq 1$,
    \item $[ab]=[a]+[b]+\eta[a][b]$,
    \item $\eta[a]=[a]\eta$ for $a\in F^{*}$,
    \item $\eta(\eta[-1]+2)$.
\end{itemize}
An element $[a_1][a_2]\cdots [a_n]$ is usually denoted $[a_1,a_2,\dots,a_n]$. There is a natural isomorphism
\[ \GW(F) \ra \KMW_0(F) \]
defined by
\[ \langle a \rangle \longmapsto 1+\eta[a]. \]

Another interesting fact about $\KMW_\ast(F)$ is that the pieces of negative degree are all canonically isomorphic to $\W(F)$, the Witt ring of $F$. Observe that in $\KMW_0(F)$ the hyperbolic form $h$ is written as $2+\eta[-1]$, and the last relation assures us that $\eta h=0$. Then the aforementioned isomorphism is given by
\[\W(F)\ra \KMW_{-i}(F),\quad \langle a\rangle\longmapsto \eta^i(1+\eta[a]). \]
\subsubsection{}
The next step is to introduce an equivalence relation on the cycles. Ideally, we would like this relation to be well behaved with respect to the rank function, so that
\[ q[V] \sim q'[V'] \Rightarrow \rm{rk}(q)[V] \sim_{rat} \rm{rk}(q')[V']. \]
Recall that in the classical case, the cycles that are rationally equivalent to zero belongs to the image of the total residue homomorphism
\[\bigoplus_{W\in X_{(i+1)}} \KM_1(k(W)) \ra \bigoplus_{V \in X_{(i)}} \KM_0(k(V)).\]
Let $V\subset W$ be a codimension $1$ subscheme (we are assuming $W$ normal). Then the field $k(W)$ has a discrete valuation $\nu$ with residue field $k(V)$. Pick a generator $\pi$ for the kernel of $\nu$, and let $\Ocal_{W,V}\subset k(W)$ be the local ring of $W$ at $V$. Then the corresponding residue homomorphism is determined by the properties
\[ \partial^{\pi}_{\nu} [\pi] = 1, \quad \partial^{\pi}_{\nu} [u]= 0\text{ for }u\in \Ocal_{W,V}^{*}. \]

To define an equivalence relation for cycles with coefficients in quadratic forms it is quite natural to look at $\KMW_1(-)$ and try to replicate the construction above, i.e. to try to define an homomorphism
\[\bigoplus_{W\in X_{(i+1)}} \KMW_1(k(W)) \ra \bigoplus_{V \in X_{(i)}} \KMW_0(k(V)). \] 
The group $\KMW_1(F)$ is generated by the symbols $\eta^{j-1}[a_1,\dots,a_j]$, where $a_i$ is in $F^*$. Given $W\subset V$ of codimension $1$, pick a parameter $\pi$ for the kernel of the valuation $\nu:k(W)\ra \ZZ\cup\{-\infty\}$. Then there is a unique homomorphism
\[ \partial^{\pi}_{\nu}: \KMW_1(k(W)) \ra \KMW_0(k(V)) \]
which satisfies
\[ \partial^{\pi}_{\nu} \eta^{j-1}[\pi,u_2,\dots,u_j] = \eta^{j-1} [\overline{u_2},\dots,\overline{u_j}]. \]
\subsubsection{}
There is a subtlety here that must not be overlooked: the homomorphism $\partial_{\nu}^{\pi}$ is not independent of the choice of the parameter $\pi$, i.e. a different choice of generator would determine a different residue homomorphism. This is a completely new feature of Milnor-Witt K-theory which is not present in Milnor K-theory. In order to make the residue homomorphism canonical, one needs to keep track of the choices of uniformizers made for each specialization. This leads to the introduction of twisted Milnor-Witt K-theory.

\subsection{Twisted Chow-Witt groups}
\subsubsection{}
If $\cal L$ is a line bundle on $\spec{F}$, we denote $\cal L^*$ the set of non-zero elements of $\cal L$. Both $\KMW_\ast(F)$ and $\ZZ[\cal L^*]$ are naturally $\ZZ[F^*]$-modules. A unit $u\in F^\ast$ acts on $\KMW_\ast(F)$ by multiplication by $\kc{u}$ and on $\cal L^\ast$ by multiplication by $u$. We can thus define the $\cal L$-twisted Milnor-Witt K-theory of $F$ as
\[ \KMW_\ast(F,\cal L):=\KMW_\ast(F)\otimes_{\ZZ[F^*]} \ZZ[\cal L^*]. \]
Therefore, given a discrete valuation $\nu:k(W)\ra\ZZ\cup\{\infty\}$ with uniformizer $\pi$, the twisted residue defined as
\[ \KMW_1(k(W)) \ra \KMW_0(k(W),(\fk m_V/\fk m_V^2)^{\vee}),\quad [a] \longmapsto \partial^{\pi}_{\nu}([a])\otimes \overline{\pi}^{\vee} \]
does not depend on the choice of uniformizer. We can also consider twists on the source of the residue map by acting as follows. Consider a line bundle $\cal L$ defined over the local scheme $\spec{\Ocal_{W,V}}$. We then define a homomorphism
\[ \KMW_1(k(W),\cal L_{k(W)}) \ra \KMW_0(k(V),(\fk m_V/\fk m_V^2)^{\vee}\otimes \cal L_{k(V)}) \]
as follows. Consider an element $[a]\otimes l$, find $u\in k(W)^*$ such that $l=ul'$ where $l'$ is a generator of the $\cal O_{W,V}$-module $\cal L$, so that we can rewrite $[a]\otimes l=[a]\kc{u}\otimes l'$. Then we set
\[ [a]\otimes l \longmapsto  \partial^{\pi}_{\nu}([a]\kc{u})\otimes\overline{\pi}^{\vee}\otimes\overline{l'},\]
where $\overline{l'}$ denotes the image of $l'$ along $\cal L\ra \cal L\otimes k(V)$.

In particular, for every line bundle $\cal L$ on $X$, we can define a residue homomorphism
\[ \KMW_1(k(W),\det(\Omega_{k(W)/k})\otimes \cal L_{k(W)}) \ra \KMW_0(k(V),\det(\Omega_{k(V)/k})\otimes \cal L_{k(V)}), \]
where on the right we are implicitly using the canonical isomorphism 
\[\det(\Omega_{\Ocal_{W,V}/k})\otimes_{\cal O_{W,V}} k(V) \otimes (\fk m_V/\fk m_V^2)^{\vee} \simeq \det(\Omega_{k(V)/k}).  \]
Henceforth, we can define a total residue homomorphism
\begin{equation}\label{eq:coker} \bigoplus_{W\in X_{(i+1)}}\KMW_1(k(W),\det(\Omega_{k(W)/k})\otimes \cal L_{k(W)}) \ra \bigoplus_{V\in X_{(i)}} \KMW_0(k(V),\det(\Omega_{k(V)/k})\otimes \cal L_{k(V)}). \end{equation}
The image of this total residue homomorphism is by definition the group of cycles with coefficients in quadratic forms that are rationally equivalent to zero \emph{in twist} $\cal L$.

The total residue homomorphism is actually part of the so called Rost-Schmid complex: the next map in the sequence is
\begin{equation}\label{eq:ker} \bigoplus_{V\in X_{(i)}}\KMW_0(k(V),\det(\Omega_{k(V)/k})\otimes \cal L_{k(V)}) \ra \bigoplus_{U\in X_{(i-1)}} \KMW_{-1}(k(U),\det(\Omega_{k(U)/k})\otimes \cal L_{k(U)}), \end{equation}
whose definition is completely analogous to the one given before.
\subsubsection{}\label{sub:chow-witt app}
The Chow-Witt group $\CHW_i(X,\cal L)$ of dimension $i$ in twist $\cal L$ is defined as the kernel of (\ref{eq:ker}) modulo the image of (\ref{eq:coker}). For usual Chow groups we don't have to consider the kernel of (\ref{eq:ker}) because $\KM_{-1}=0$.
As a matter of facts, we can always (non-canonically) identify
\[ \bigoplus_{V\in X_{(i)}}\KMW_0(k(V),\cal L) \simeq \bigoplus_{V\in X_{(i)}}\KMW_0(k(V)) \]
by choosing a generator for each $\cal L_{k(V)}$. Therefore, we could still think of Chow-Witt groups as subquotients of cycles with coefficients in quadratic forms, but this should be avoided: changing twists radically changes the rational equivalence we are using, by affecting the definition of residue morphism.

If $X$ is smooth of dimension $n$, then it is also possible to define the (cohomological) Chow-Witt group $\CHW^i(X,\cal L)$ of codimension (or degree) $i$ in twist $\cal L$. The relation between the two groups is as follows:
\[ \CHW^i(X,\cal L) \simeq \CHW_{n-i}(X,\det(\Omega_{X})^{\vee}\otimes\cal L) \]

\printbibliography
\end{document}